\newtheorem{algorithm}{Algorithm}[section]
\newtheorem{lemma}{Lemma}[section]
\newtheorem{definition}{Definition}[section]
\newtheorem{theorem}{Theorem}[section]
\newtheorem{proposition}{Proposition}[section]
\newtheorem{remark}{Remark}[section]
\newtheorem{assumption}{Assumption}
\newcommand{\norm}[1]{\Vert #1 \Vert}
\newcommand{\mv}{\,\vert\,}
\newcommand{\R}{\mathbb{R}}
\newcommand{\N}{\mathbb{N}}
\newcommand{\Itl}[2]{\par\noindent\makebox[#1\parindent][l]{#2}\ignorespaces}
\newcommand{\setto}[1]{\mathop \to\limits^{#1}}
\newcommand{\Beta}{\theta}
\newcommand{\email}[1]{{\tt #1}}
\begin{document}

\title{An SQP method for mathematical programs with vanishing constraints with strong convergence properties}
\author{Mat\'u\v{s} Benko, Helmut Gfrerer\thanks{Institute of Computational Mathematics, Johannes Kepler University Linz,
              A-4040 Linz, Austria, \email{benko@numa.uni-linz.ac.at}, \email{helmut.gfrerer@jku.at}}}

\date{}
        
\maketitle

\begin{abstract}
  We propose an SQP algorithm for mathematical programs with vanishing constraints which solves at each iteration a quadratic
  program with linear vanishing constraints. The algorithm is based on the newly developed concept of $\mathcal Q$-stationarity \cite{BeGfr16b}.
  We demonstrate how $\mathcal Q_M$-stationary solutions of the quadratic program can be obtained.
  We show that all limit points of the sequence of iterates generated by the basic SQP method are at least M-stationary
  and by some extension of the method we also guarantee the stronger property of $\mathcal Q_M$-stationarity of the limit points.
  
  {\bf Key words:} SQP method, mathematical programs with vanishing constraints, $\mathcal Q$-stationarity, $\mathcal Q_M$-stationarity
  
  {\bf AMS subject classifications:} 49M37, 90C26, 90C55
\end{abstract}

\section{Introduction}
   
   Consider the following {\em mathematical program with vanishing constraints} (MPVC)
   \begin{equation} \label{eq : genproblem}
      \begin{array}{rll}
	\min\limits_{x \in \mathbb{R}^n} & f(x) & \\
	\textrm{subject to } & h_i(x) = 0 & i \in E,\\
	& g_i(x) \leq 0 & i \in I,\\
	& H_i(x) \geq 0, \, G_i(x) H_i(x) \leq 0 & i \in V,
      \end{array}
    \end{equation}
    with continuously differentiable functions $f$, $h_i, i \in E$, $g_i, i \in I$, $G_i, H_i, i \in V$ and finite index sets $E,I$ and $V$.
    
    Theoretically, MPVCs can be viewed as standard nonlinear optimization problems, but due to the vanishing
    constraints, many of the standard constraint qualifications of nonlinear programming are violated at any feasible point $\bar x$
    with $H_i(\bar x) = G_i(\bar x) = 0$ for some $i \in V$.
    On the other hand, by introducing slack variables, MPVCs may be reformulated as so-called
    mathematical programs with complementarity constraints (MPCCs), see \cite{Ho09}.
    However, this approach is also not satisfactory as it has turned out that MPCCs are in fact even more difficult to handle than MPVCs.
    This makes it necessary, both from a theoretical and numerical point of view, to consider special tailored algorithms
    for solving MPVCs. Recent numerical methods follow different directions.
    A smoothing-continuation method and a regularization approach for MPCCs are considered in \cite{FuPa99,Sch01}
    and a combination of these techniques, a smoothing-regularization approach for MPVCs is investigated in \cite{AchHoKa13}.
    In \cite{IzSo09,AchKaHo12} the relaxation method has been suggested in order to deal with the inherent difficulties of MPVCs.
    
    In this paper, we carry over a well known SQP method from nonlinear programming to MPVCs.
    We proceed in a similar manner as in \cite{BeGfr16a}, where an SQP method for MPCCs was introduced by Benko and Gfrerer.
    The main task of our method is to solve in each iteration step a quadratic program with linear vanishing constraints,
    a so-called auxiliary problem.
    Then we compute the next iterate by reducing a certain merit function along some polygonal line which is given by the solution
    procedure for the auxiliary problem. To solve the auxiliary problem we exploit the new concept of {\em $\mathcal Q_M$-stationarity}
    introduced in the recent paper by Benko and Gfrerer \cite{BeGfr16b}.
    $\mathcal Q_M$-stationarity is in general stronger than M-stationarity
    and it turns out to be very suitable for a numerical approach as it allows to handle
    the program with vanishing constraints without relying on enumeration techniques.
    Surprisingly, we compute at least a $\mathcal Q_M$-stationary solution of the auxiliary problem
    just by means of quadratic programming by solving appropriate convex subproblems.
    
    Next we study the convergence of the SQP method. We show that every limit point of the generated sequence is at least M-stationary.
    Moreover, we consider the extended version of our SQP method, where at each iterate a correction of the iterate is made
    to prevent the method from converging to undesired points.
    Consequently we show that under some additional assumptions all limit points are at least $\mathcal Q_M$-stationary.
    Numerical tests indicate that our method behaves very reliably.
    
    A short outline of this paper is as follows.
    In section 2 we recall the basic stationarity concepts for MPVCs as well as
    the recently developed concepts of $\mathcal Q$- and $\mathcal Q_M$-stationarity.
    In section 3 we describe an algorithm based on quadratic programming
    for solving the auxiliary problem occurring in every iteration of our SQP method.
    We prove the finiteness and summarize some other properties of this algorithm.
    In section 4 we propose the basic SQP method. We describe how the next iterate is computed by means of the solution of the auxiliary problem
    and we consider the convergence of the overall algorithm.
    In section 5 we consider the extended version of the overall algorithm and we discuss its convergence.
    Section 6 is a summary of numerical results we obtained by implementing our basic algorithm in MATLAB and by testing it on
    a subset of test problems considered in the thesis of Hoheisel \cite{Ho09}.
    
    In what follows we use the following notation. Given a set $M$ we denote by
    $\mathcal{P}(M):=\{ (M_1,M_2) \mv M_1 \cup M_2 = M, \, M_1 \cap M_2 = \emptyset \}$
    the collection of all partitions of $M$. Further, for a real number $a$ we use the notation $(a)^+:=\max(0,a)$, $(a)^-:=\min(0,a)$.
    For a vector $u= (u_1, u_2, \ldots, u_m)^T \in \R^m$ we define $\vert u \vert$, $(u)^+$, $(u)^-$ componentwise, i.e.
    $\vert u \vert := (\vert u_1 \vert, \vert u_2 \vert, \ldots, \vert u_m \vert)^T$, etc.
    Moreover, for $u \in \R^m$ and $1 \leq p \leq \infty$ we denote the $\ell_p$ norm of $u$ by $\norm{u}_p$
    and we use the notation $\norm{u} := \norm{u}_2$ for the standard $\ell_2$ norm.
    Finally, given a sequence $y_k \in \R^m$, a point $y \in \R^m$ and an infinite set $K \subset \N$ we write $y_k \setto{K} y$
    instead of $\lim_{k \to \infty, k \in K} y_k = y$.

\section{Stationary points for MPVCs}
  
  Given a point $\bar x$ feasible for \eqref{eq : genproblem} we define the following index sets
  \begin{eqnarray} \nonumber
      I^g(\bar x) & := & \{ i \in I \mv g_i(\bar x) = 0 \}, \\ \nonumber
      I^{0+}(\bar x) & := & \{ i \in V \mv H_i(\bar x) = 0 < G_i(\bar x) \}, \\ \label{eqn : IndexStes}
      I^{0-}(\bar x) & := & \{ i \in V \mv H_i(\bar x) = 0 > G_i(\bar x) \}, \\ \nonumber
      I^{+0}(\bar x) & := & \{ i \in V \mv H_i(\bar x) > 0 = G_i(\bar x) \}, \\ \nonumber
      I^{00}(\bar x) & := & \{ i \in V \mv H_i(\bar x) = 0 = G_i(\bar x) \}, \\ \nonumber
      I^{+-}(\bar x) & := & \{ i \in V \mv H_i(\bar x) > 0 < G_i(\bar x) \}.
  \end{eqnarray}

  In contrast to nonlinear programming there exist a lot of stationarity concepts for MPVCs.
  \begin{definition}
    Let $\bar x$ be feasible for \eqref{eq : genproblem}. Then $\bar x$ is called
    \begin{enumerate}
     \item {\em weakly stationary}, if there are multipliers $\lambda_i^g, i \in I$, $\lambda_i^h, i \in E$, $\lambda_i^G, \lambda_i^H, i \in V$
     such that
     \begin{equation} \label{eq : StatEq}
      \nabla f(\bar x)^T + \sum_{i \in E} \lambda_i^h \nabla h_i(\bar x)^T + \sum_{i \in I} \lambda_i^g \nabla g_i(\bar x)^T
      + \sum_{i \in V} \left( - \lambda_i^H \nabla H_i(\bar x)^T + \lambda_i^G \nabla G_i(\bar x)^T \right) = 0
     \end{equation}
     and
     \begin{equation} \label{eq : WeakStat}
      \begin{array}{rcl}
      \lambda_i^g g_i(\bar x) = 0, i \in I, & \lambda_i^H H_i(\bar x) = 0, i \in V, & \lambda_i^G G_i(\bar x) = 0, i \in V, \\
      \lambda_i^g \geq 0, i \in I, & \lambda_i^H \geq 0, i \in I^{0-}(\bar x), & \lambda_i^G \geq 0, i \in I^{00}(\bar x) \cup I^{+0}(\bar x).
     \end{array}
     \end{equation}
      \item {\em M-stationary}, if it is weakly stationary and
      \begin{equation} \label{eq : MStatCond}
	\lambda_i^H \lambda_i^G = 0, i \in I^{00}(\bar x).
      \end{equation}
      \item {\em $\mathcal{Q}$-stationary with respect to $(\beta^1,\beta^2)$}, where $(\beta^1,\beta^2)$ is a given partition of $I^{00}(\bar x)$,
      if there exist two multipliers $\overline\lambda=(\overline\lambda^h,\overline\lambda^g,\overline\lambda^H,\overline\lambda^G)$
      and $\underline\lambda=(\underline\lambda^h,\underline\lambda^g,\underline\lambda^H,\underline\lambda^G)$,
      both fulfilling \eqref{eq : StatEq} and \eqref{eq : WeakStat}, such that
      \begin{equation} \label{eq : QStatCond}
	\overline\lambda_i^G = 0, \ \underline\lambda_i^H, \underline\lambda_i^G \geq 0, \ i \in \beta^1; \quad 
	\overline\lambda_i^H, \overline\lambda_i^G \geq 0, \ \underline\lambda_i^G = 0, \ i \in \beta^2.
      \end{equation}
      \item {\em $\mathcal{Q}$-stationary}, if there is some partition $(\beta^1,\beta^2) \in \mathcal P(I^{00}(\bar x))$ such that $\bar x$ is
      $\mathcal{Q}$-stationary with respect to $(\beta^1,\beta^2)$.
      \item {\em $\mathcal{Q}_M$-stationary}, if it is $\mathcal{Q}$-stationary and at least one of the multipliers $\overline\lambda$ and
      $\underline\lambda$ fulfills M-stationarity condition \eqref{eq : MStatCond}.
      \item {\em S-stationary}, if it is weakly stationary and
      \[ \lambda_i^H \geq 0, \lambda_i^G = 0, i \in I^{00}(\bar x). \]
    \end{enumerate}
  \end{definition}
  The concepts of $\mathcal{Q}$-stationarity and $\mathcal{Q}_M$-stationarity
  were introduced in the recent paper by Benko and Gfrerer \cite{BeGfr16b}, whereas the other stationarity concepts
  are very common in the literature, see e.g. \cite{AchKa08,Ho09,IzSo09}.  
  The following implications hold:
  \begin{eqnarray*}
    & \textrm{S-stationarity} \Rightarrow \mathcal{Q}\textrm{-stationarity with respect to every }
    (\beta^1,\beta^2) \in \mathcal{P}(I^{00}(\bar x)) \Rightarrow & \\
    & \mathcal{Q}\textrm{-stationarity w.r.t. } (\emptyset,I^{00}(\bar x)) \Rightarrow
    \mathcal{Q}_M\textrm{-stationarity} \Rightarrow \textrm{M-stationarity} \Rightarrow \textrm{weak stationarity}.&
  \end{eqnarray*}
  The first implication follows from the fact that the multiplier corresponding to S-stationarity fulfills
  the requirements for both $\overline\lambda$ and $\underline\lambda$. The third implication holds because
  for $(\beta^1,\beta^2) = (\emptyset,I^{00}(\bar x))$ the multiplier $\underline\lambda$ fulfills
  \eqref{eq : MStatCond} since $\underline\lambda_i^G = 0$ for $i \in I^{00}(\bar x)$.
  
  Note that the S-stationarity conditions are nothing else than the Karush-Kuhn-Tucker conditions for the problem \eqref{eq : genproblem}.
  As we will demonstrate in the next theorems, a local minimizer is S-stationary only under some comparatively stronger constraint qualification,
  while it is $\mathcal{Q}_M$-stationary under very weak constraint qualifications. Before stating the theorems we recall some
  common definitions.
    
  Denoting
  \begin{eqnarray} \label{eqn : FPdef}
    F_i(x):=(-H_i(x),G_i(x))^T, i \in V, && P := \{(a,b) \in \R_- \times \R \mv ab \geq 0 \}, \\ \label{eqn : mathcFdef}
    \mathcal{F}(x) := (h(x)^T,g(x)^T,F(x)^T)^T, && D:= \{0\}^{\vert E \vert} \times \R_-^{\vert I \vert} \times P^{\vert V \vert},
  \end{eqnarray}
  we see that problem \eqref{eq : genproblem} can be rewritten as
  \[\min f(x) \quad \textrm{subject to} \quad x \in \Omega_V := \{x \in \R^n \mv \mathcal{F}(x) \in D \}.\]
  Recall that the {\em contingent} (also {\em tangent}) {\em cone} to a closed
  set $\Omega \subset \R^m$ at $u \in \Omega$ is defined by
  \[T_{\Omega}(u) := \{ d \in \R^m \mv \exists (d_k) \to d, \exists (\tau_k) \downarrow 0 : u + \tau_k d_k \in \Omega \, \forall k \}. \]
  The {\em linearized cone} to $\Omega_V$ at $\bar x \in \Omega_V$ is then defined as
  $T_{\Omega_V}^{\mathrm{lin}}(\bar x) := \{d \in \R^n \mv \nabla \mathcal{F}(\bar x) d \in T_{D}(\mathcal{F}(\bar x))\}$.
  
  Further recall that $\bar x \in \Omega_V$ is called {\em B-stationary} if
  \[\nabla f(\bar x) d \geq 0 \, \forall d \in T_{\Omega_V}(\bar x).\]
  Every local minimizer is known to be B-stationary.
    
  \begin{definition}
    Let $\bar x$ be feasible for \eqref{eq : genproblem}, i.e $\bar x \in \Omega_V$. We say that the {\em generalized Guignard constraint qualification}
    (GGCQ) holds at $\bar x$, if the polar cone of $T_{\Omega_V}(\bar x)$ equals the polar cone of $T_{\Omega_V}^{\mathrm{lin}}(\bar x)$.
  \end{definition}
  
  \begin{theorem}[{c.f. \cite[Theorem 8]{BeGfr16b}}]
    Assume that GGCQ is fulfilled at the point $\bar x \in \Omega_V$. If $\bar x$ is B-stationary,
    then $\bar x$ is $\mathcal{Q}$-stationary for \eqref{eq : genproblem} with respect to every partition
    $(\beta^1,\beta^2) \in \mathcal{P}(I^{00}(\bar x))$ and it is also $\mathcal{Q}_M$-stationary.
  \end{theorem}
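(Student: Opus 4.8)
The plan is to reduce the $\mathcal{Q}$-stationarity assertion to a statement about the contingent cone $T_D(\mathcal{F}(\bar x))$ and its polar, and then exploit the product structure of $D$. Since GGCQ holds, B-stationarity $\nabla f(\bar x)\,d \geq 0$ for all $d \in T_{\Omega_V}(\bar x)$ is equivalent to $-\nabla f(\bar x)^T$ lying in the polar cone of $T^{\mathrm{lin}}_{\Omega_V}(\bar x)$. By definition of the linearized cone, this polar can be computed via the chain rule as the image under $\nabla\mathcal{F}(\bar x)^T$ of the polar cone of $T_D(\mathcal{F}(\bar x))$ — i.e.\ there exists a multiplier $\mu$ with $\mu \in (T_D(\mathcal{F}(\bar x)))^\circ$ (the polar) and $-\nabla f(\bar x)^T = \nabla \mathcal{F}(\bar x)^T \mu$. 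This is the standard dualization step, and the only subtlety is that it requires $T^{\mathrm{lin}}_{\Omega_V}(\bar x)$ to be expressible so that its polar is exactly $\nabla\mathcal{F}(\bar x)^T (T_D(\mathcal{F}(\bar x)))^\circ$; because $D$ is a product of the simple sets $\{0\}$, $\R_-$, and $P$, the contingent cone $T_D$ decomposes as a product and this image representation holds without further qualification.

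\emph{First I would} decompose $T_D(\mathcal{F}(\bar x))$ and its polar componentwise over the product $\{0\}^{|E|}\times \R_-^{|I|} \times P^{|V|}$. For the equality and inequality blocks this recovers the usual sign conditions on $\lambda^h$ and $\lambda^g$ together with complementarity $\lambda_i^g g_i(\bar x)=0$. The crucial block is the nonconvex set $P = \{(a,b)\in \R_-\times\R \mv ab \geq 0\}$, evaluated at $F_i(\bar x) = (-H_i(\bar x), G_i(\bar x))$. I would compute $T_P$ and $(T_P)^\circ$ at each of the relevant configurations corresponding to the index sets $I^{0+}, I^{0-}, I^{+0}, I^{00}, I^{+-}$. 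For indices outside $I^{00}(\bar x)$ the set $P$ is locally smooth or a single halfline, so the polar is a single ray or line, pinning down the signs in \eqref{eq : WeakStat} uniquely. At an index $i \in I^{00}(\bar x)$ we have $F_i(\bar x)=(0,0)$, where $P$ has a reentrant corner: $T_P(0,0)=P$ itself, and its polar $(T_P)^\circ = P^\circ$ is the single ray $\R_+\cdot(1,0)$ pointing into $\{a\le 0\}$ along the $a$-axis \emph{only if} one reads $P$ as the full nonconvex cone. The key observation is that the polar of a nonconvex cone equals the polar of its convex hull, but B-stationarity only gives membership in this polar; to extract multipliers one must instead work with the contingent cone directly.

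\emph{The main obstacle}, and the heart of the proof, is converting the single B-stationarity multiplier condition into the \emph{pair} $(\overline\lambda, \underline\lambda)$ required by $\mathcal{Q}$-stationarity with respect to an arbitrary partition $(\beta^1,\beta^2)$. The mechanism is that at a corner index $i\in I^{00}$, the contingent cone $T_{\Omega_V}(\bar x)$ (not merely $T^{\mathrm{lin}}$) is a \emph{union} of two polyhedral pieces, corresponding to the two branches $H_i \geq 0,\, G_i = 0$ and $H_i = 0,\, G_i \leq 0$ of the vanishing constraint near the origin. B-stationarity requires $\nabla f(\bar x)d \geq 0$ on the whole union, hence on each branch separately. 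Dualizing the inequality restricted to the branch selected by $\beta^1$ yields one multiplier, and restricting to the branch selected by $\beta^2$ yields the other; assigning index $i\in\beta^1$ to the branch forcing $\overline\lambda_i^G=0$ and $\underline\lambda_i^H,\underline\lambda_i^G\geq 0$, and symmetrically for $\beta^2$, reproduces exactly condition \eqref{eq : QStatCond}. This is where GGCQ is essential: it lets me replace $T_{\Omega_V}$ by $T^{\mathrm{lin}}_{\Omega_V}$ on each branch so that each branchwise dualization produces a genuine KKT multiplier satisfying \eqref{eq : StatEq}.

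\emph{Finally}, $\mathcal{Q}_M$-stationarity follows immediately from the implication chain recalled before the theorem: taking the specific partition $(\beta^1,\beta^2) = (\emptyset, I^{00}(\bar x))$, the multiplier $\underline\lambda$ satisfies $\underline\lambda_i^G = 0$ for all $i\in I^{00}(\bar x)$, hence $\underline\lambda_i^H\underline\lambda_i^G = 0$, so \eqref{eq : MStatCond} holds for $\underline\lambda$ and the point is $\mathcal{Q}_M$-stationary. Thus once the branchwise argument establishes $\mathcal{Q}$-stationarity with respect to every partition, the $\mathcal{Q}_M$ conclusion is a free corollary, and I would close by invoking \cite[Theorem 8]{BeGfr16b} for the detailed polar-cone computations at each index configuration.
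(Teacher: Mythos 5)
The paper does not actually prove this theorem; it imports it from \cite[Theorem 8]{BeGfr16b}, so there is no in-paper argument to compare against. Your reconstruction is, in substance, the intended one and the one the paper's own machinery in \eqref{eqn : QstatLNC1}--\eqref{eqn : QstatLNC2} reflects: at each $i\in I^{00}(\bar x)$ the tangent cone $T_P(0,0)=P^1\cup P^2$ splits into two polyhedral branches, so $T^{\mathrm{lin}}_{\Omega_V}(\bar x)$ is a finite union of convex polyhedral cones; GGCQ plus B-stationarity gives $\nabla f(\bar x)d\geq 0$ on each branch, exact polyhedral (Farkas/LP) duality on the branch sending $\beta^1$ to $P^1$ and $\beta^2$ to $P^2$ yields $\overline\lambda$, the complementary branch yields $\underline\lambda$, and the $\mathcal{Q}_M$ claim follows from the partition $(\emptyset,I^{00}(\bar x))$ exactly as the paper notes after the definition. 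One correction is needed in your first paragraph: the claim that the polar of $T^{\mathrm{lin}}_{\Omega_V}(\bar x)$ equals $\nabla\mathcal{F}(\bar x)^T\bigl(T_D(\mathcal{F}(\bar x))\bigr)^{\circ}$ without further qualification is false. Since $\bigl(T_P(0,0)\bigr)^{\circ}=\R_+\times\{0\}$, that identity would produce a single multiplier with $\lambda_i^H\geq 0$, $\lambda_i^G=0$ on all of $I^{00}(\bar x)$, i.e.\ S-stationarity, which the subsequent theorem shows requires the extra conditions \eqref{eq : MPVCMFCQ1}--\eqref{eq : MPVCMFCQ2}. The correct statement is that $\bigl(T^{\mathrm{lin}}_{\Omega_V}(\bar x)\bigr)^{\circ}$ is the \emph{intersection over branches} of $\nabla\mathcal{F}(\bar x)^T(\textrm{branch})^{\circ}$, and an intersection of images strictly contains the image of the intersection in general; this is precisely why one obtains a different multiplier for each branch rather than one multiplier valid for all of them. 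You effectively retract the claim later and never use it, so the proof as a whole stands, but that sentence should be deleted rather than merely qualified.
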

  
  \begin{theorem}[{c.f. \cite[Theorem 8]{BeGfr16b}}]
    If $\bar x$ is Q-stationary with respect to a partition $(\beta^1,\beta^2) \in \mathcal{P}(I^{00}(\bar x))$,
    such that for every $j \in \beta^1$ there exists some $z^j$ fulfilling
    \begin{equation} \label{eq : MPVCMFCQ1}
    \begin{array}{l}
      \nabla h(\bar x) z^j = 0, \\
      \nabla g_i(\bar x) z^j = 0, i \in I^g(\bar x), \\
      \nabla G_i(\bar x) z^j = 0, i \in I^{+0}(\bar x), \\
      \nabla G_i(\bar x) z^j \left\{
	\begin{array}{lr}
	  \geq 0, & i \in \beta^1,\\
	  \leq 0, & i \in \beta^2,
        \end{array}
      \right. \\
      \nabla H_i(\bar x) z^j = 0, i \in I^{0-}(\bar x) \cup I^{00}(\bar x) \cup I^{0+}(\bar x) \setminus \{j\}, \\
      \nabla H_j(\bar x) z^j = -1
    \end{array}
  \end{equation}
  and there is some $\bar z$ such that
  \begin{equation} \label{eq : MPVCMFCQ2}
    \begin{array}{l}
      \nabla h(\bar x) \bar z = 0, \\
      \nabla g_i(\bar x) \bar z = 0, i \in I^g(\bar x), \\
      \nabla G_i(\bar x) \bar z = 0, i \in I^{+0}(\bar x), \\
      \nabla G_i(\bar x) \bar z \left\{
	\begin{array}{lr}
	  \geq 0, & i \in \beta^1,\\
	  \leq -1, & i \in \beta^2,
        \end{array}
      \right. \\
      \nabla H_i(\bar x) \bar z = 0, i \in I^{0-}(\bar x) \cup I^{00}(\bar x) \cup I^{0+}(\bar x),
    \end{array}
  \end{equation}
    then $\bar x$ is S-stationary and consequently also B-stationary.
  \end{theorem}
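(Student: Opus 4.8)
The plan is to show that the multiplier $\overline\lambda$ supplied by the $\mathcal Q$-stationarity hypothesis is \emph{itself} an S-stationary multiplier; no convex combination or fresh multiplier is needed. Recall that $\overline\lambda$ already satisfies \eqref{eq : StatEq} and \eqref{eq : WeakStat}, so by the complementarity part of \eqref{eq : WeakStat} its only possibly nonzero components are $\overline\lambda_i^g$ for $i\in I^g(\bar x)$, $\overline\lambda_i^G$ for $i\in I^{+0}(\bar x)\cup I^{00}(\bar x)$, and $\overline\lambda_i^H$ for $i\in I^{0+}(\bar x)\cup I^{0-}(\bar x)\cup I^{00}(\bar x)$. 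Since $I^{00}(\bar x)=\beta^1\cup\beta^2$ and the $\mathcal Q$-stationarity conditions \eqref{eq : QStatCond} already give $\overline\lambda_i^G=0$ on $\beta^1$ and $\overline\lambda_i^H\ge 0$ on $\beta^2$, it remains only to establish the two ``missing halves'': $\overline\lambda_i^G=0$ for $i\in\beta^2$ and $\overline\lambda_i^H\ge 0$ for $i\in\beta^1$. These are precisely the extra requirements in the definition of S-stationarity.

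For the first half I would contract the \emph{difference} of the two stationarity equations with $\bar z$. Writing $\mu:=\overline\lambda-\underline\lambda$, the vector $\mu$ solves the homogeneous version of \eqref{eq : StatEq} (the $\nabla f(\bar x)$ term cancels), which is why $\bar z$ is applied to the difference rather than to $\overline\lambda$ directly. Plugging in \eqref{eq : MPVCMFCQ2}, the $h$-, $g$-, and $H$-contributions vanish and one is left with $\sum_{i\in\beta^1\cup\beta^2}\mu_i^G\,\nabla G_i(\bar x)\bar z=0$. Using \eqref{eq : QStatCond} to sign $\mu_i^G$ (it is $\le 0$ on $\beta^1$ and $\ge 0$ on $\beta^2$) together with the prescribed sign of $\nabla G_i(\bar x)\bar z$, every summand is $\le 0$, hence each vanishes; on $\beta^2$ one has $\nabla G_i(\bar x)\bar z\le -1\neq 0$, forcing $\mu_i^G=0$ and therefore $\overline\lambda_i^G=0$, as desired.

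For the second half I would, for each fixed $j\in\beta^1$, contract both stationarity equations with $z^j$ and compare. By \eqref{eq : MPVCMFCQ1} the only surviving $H$-term is the one carrying $\nabla H_j(\bar x)z^j=-1$, while the surviving $G$-terms range over $\beta^1$ for $\underline\lambda$ and over $\beta^2$ for $\overline\lambda$. For $\underline\lambda$ this $G$-sum is nonnegative (nonnegative multipliers times $\nabla G_i(\bar x)z^j\ge 0$ on $\beta^1$), so the identity becomes $\nabla f(\bar x)z^j+\underline\lambda_j^H+(\text{nonneg.})=0$; as $\underline\lambda_j^H\ge 0$ by \eqref{eq : QStatCond}, this yields $\nabla f(\bar x)z^j\le 0$. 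For $\overline\lambda$ the corresponding $G$-sum over $\beta^2$ is nonpositive, whence $\overline\lambda_j^H=-\nabla f(\bar x)z^j-(\text{nonpos.})\ge -\nabla f(\bar x)z^j\ge 0$. Thus $\overline\lambda_j^H\ge 0$ for every $j\in\beta^1$.

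Combining the two halves, $\overline\lambda$ satisfies $\overline\lambda_i^G=0$ and $\overline\lambda_i^H\ge 0$ on all of $I^{00}(\bar x)$, i.e. $\bar x$ is S-stationary. The addendum that $\bar x$ is then B-stationary follows from a standard duality argument: the S-stationarity relations express $-\nabla f(\bar x)$ through the constraint gradients with signs matching the polar of the linearized cone, so $\nabla f(\bar x)d\ge 0$ for all $d\in T_{\Omega_V}^{\mathrm{lin}}(\bar x)$, and since $T_{\Omega_V}(\bar x)\subseteq T_{\Omega_V}^{\mathrm{lin}}(\bar x)$ always holds, B-stationarity is immediate. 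I expect the main obstacle to be purely organizational: the hypotheses are deliberately asymmetric, with $\bar z$ tuned to clear the $\beta^2$ part of $\overline\lambda^G$ and each $z^j$ tuned to sign the $\beta^1$ part of $\overline\lambda^H$, so the real care lies in pairing the correct multiplier with the correct test direction and keeping every inequality direction straight.
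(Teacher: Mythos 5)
Your argument is correct, and all the steps check out: testing the difference $\mu=\overline\lambda-\underline\lambda$ against $\bar z$ does kill the $h$-, $g$-, $H$- and $I^{+0}$-$G$-terms (since the multipliers vanish wherever the corresponding directional derivatives are not forced to zero by \eqref{eq : MPVCMFCQ2}), leaving a sum of nonpositive terms over $I^{00}(\bar x)$ that must vanish termwise and hence forces $\overline\lambda_i^G=0$ on $\beta^2$ via $\nabla G_i(\bar x)\bar z\le -1$; and testing both multipliers against $z^j$ correctly isolates $\lambda_j^H$ (the contracted $H$-term is $-\lambda_j^H\nabla H_j(\bar x)z^j=+\lambda_j^H$), with $\underline\lambda$ giving $\nabla f(\bar x)z^j\le 0$ and $\overline\lambda$ then giving $\overline\lambda_j^H\ge -\nabla f(\bar x)z^j\ge 0$. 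The concluding step (S-stationarity $\Rightarrow$ B-stationarity via $T_{\Omega_V}(\bar x)\subseteq T^{\mathrm{lin}}_{\Omega_V}(\bar x)$ and polarity) is also standard and valid without any constraint qualification. I cannot compare your route to the paper's, because the paper does not prove this theorem: it is stated with a citation to \cite{BeGfr16b} and no argument is given in the text. Your write-up is therefore a self-contained derivation of an imported result; it is consistent with the sign conventions of \eqref{eq : StatEq}, \eqref{eq : WeakStat} and \eqref{eq : QStatCond} as used here, and I see no gap.
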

  
  Note that these two theorems together also imply that a local minimizer $\bar x \in \Omega_V$ is S-stationary provided GGCQ is fulfilled at $\bar x$
  and there exists a partition $(\beta^1,\beta^2) \in \mathcal{P}(I^{00}(\bar x))$,
  such that for every $j \in \beta^1$ there exists $z^j$ fulfilling \eqref{eq : MPVCMFCQ1} and $\bar z$ fulfilling \eqref{eq : MPVCMFCQ2}.
  
  Moreover, note that \eqref{eq : MPVCMFCQ1} and \eqref{eq : MPVCMFCQ2} are fulfilled for every partition $(\beta^1,\beta^2) \in \mathcal{P}(I^{00}(\bar x))$
  e.g. if the gradients of active constraints are linearly independent. On the other hand, in the special case of partition
  $(\emptyset,I^{00}(\bar x)) \in \mathcal{P}(I^{00}(\bar x))$, this conditions read as the requirement that the system
  \begin{equation*} \label{eq : MPVCMFCQ2special}
    \begin{array}{l}
      \nabla h(\bar x) \bar z = 0, \\
      \nabla g_i(\bar x) \bar z = 0, i \in I^g(\bar x), \\
      \nabla G_i(\bar x) \bar z = 0, i \in I^{+0}(\bar x), \\
      \nabla G_i(\bar x) \bar z \leq -1, i \in I^{00}(\bar x), \\
      \nabla H_i(\bar x) \bar z = 0, i \in I^{0-}(\bar x) \cup I^{00}(\bar x) \cup I^{0+}(\bar x)
    \end{array}
  \end{equation*}
  has a solution, which resembles the well-known Mangasarian-Fromovitz constraint qualification (MFCQ) of nonlinear programming
  and it seems to be a rather weak and possibly often fulfilled assumption.
  
  Finally, we recall the definitions of normal cones.
  The {\em regular normal cone} to a closed set $\Omega \subset \R^m$ at $u \in \Omega$ can be defined as the polar
  cone to the tangent cone by
  \[
    \widehat N_{\Omega}(u) := (T_{\Omega}(u))^{\circ} = \{ z \in \R^m \mv (z,d) \leq 0 \, \forall d \in T_{\Omega}(u)\}.
  \]
  The {\em limiting normal cone} to a closed set $\Omega \subset \R^m$ at $u \in \Omega$ is given by
  \begin{equation} \label{eq : LimitNCdef}
    N_{\Omega}(u) := \{ z \in \R^m \mv \exists u_k \to u, z_k \to z \textrm{ with } u_k \in \Omega, z_k \in \widehat N_{\Omega}(u_k) \, \forall k \}.
  \end{equation} 
  In case when $\Omega$ is a convex set, regular and limiting normal cone coincide with the classical normal cone of convex analysis, i.e.
  \begin{equation} \label{eq : convexNC}
    \widehat N_{\Omega}(u) = N_{\Omega}(u) = \{z \in \R^m \mv (z,u - v) \leq 0 \, \forall v \in \Omega\}.
  \end{equation}
  Well-known is also the following description of the limiting normal cone
  \begin{equation} \label{eq : propLimitNC}
    N_{\Omega}(u) := \{ z \in \R^m \mv \exists u_k \to u, z_k \to z \textrm{ with } u_k \in \Omega, z_k \in N_{\Omega}(u_k) \, \forall k \}.
  \end{equation}
  
  We conclude this section by the following characterization of M- and $\mathcal Q$-stationarity via limiting normal cone.
  Straightforward calculations yield that
  \begin{eqnarray*}
    & N_{P}(F_i(\bar x)) = \left\{
    \begin{array}{ll}
      \R_+ \times \{0\} & \textrm{if } i \in I^{0-}(\bar x), \\
      \R \times \{0\} \cup \{0\} \times \R_+ & \textrm{if } i \in I^{00}(\bar x), \\
      \R \times \{0\} & \textrm{if } i \in I^{0+}(\bar x), \\
      \{0\} \times \R_+ & \textrm{if } i \in I^{+0}(\bar x), \\
      \{0\} \times \{0\} & \textrm{if } i \in I^{+-}(\bar x),
    \end{array} \right.& \\
    & N_{P^1}(F_i(\bar x)) = \R \times \{0\} \quad \textrm{ if } i \in I^{0+}(\bar x) \cup I^{00}(\bar x) \cup I^{0-}(\bar x),& \\
    &N_{P^2}(F_i(\bar x)) = \left\{
    \begin{array}{ll}
      \R_+ \times \R_+ & \textrm{if } i \in I^{00}(\bar x), \\
      N_{P}(F_i(\bar x)) & \textrm{if } i \in I^{0-}(\bar x) \cup I^{+0}(\bar x) \cup I^{+-}(\bar x)
    \end{array} \right.&
  \end{eqnarray*}
  and hence the M-stationarity conditions \eqref{eq : WeakStat} and \eqref{eq : MStatCond} can be replaced by
  \begin{equation} \label{eq : MstatLNC}
  (\lambda^h,\lambda^g,\lambda^H,\lambda^G) \in N_{D}(\mathcal{F}(\bar x))
  = \R^{\vert E \vert} \times \{u \in \R_+^{\vert I \vert} \mv (u,g(\bar x)) = 0\} \times N_{P^{\vert V \vert}}(F(\bar x))
  \end{equation}
  and the $\mathcal Q$-stationarity conditions \eqref{eq : WeakStat} and \eqref{eq : QStatCond} can be replaced by
  \begin{eqnarray} \label{eqn : QstatLNC1}
  (\overline\lambda^h,\overline\lambda^g,\overline\lambda^H,\overline\lambda^G) & \in &
  \R^{\vert E \vert} \times \{u \in \R_+^{\vert I \vert} \mv (u,g(\bar x)) = 0\} \times \prod_{i \in V} \nu_i^{\beta^1,\beta^2}(\bar x), \\ \label{eqn : QstatLNC2}
  (\underline\lambda^h,\underline\lambda^g,\underline\lambda^H,\underline\lambda^G) & \in &
  \R^{\vert E \vert} \times \{u \in \R_+^{\vert I \vert} \mv (u,g(\bar x)) = 0\} \times \prod_{i \in V} \nu_i^{\beta^2,\beta^1}(\bar x),
  \end{eqnarray}
  where for $(\beta^1,\beta^2) \in \mathcal{P}(I^{00}(\bar x))$ we define
  \[\nu_i^{\beta^1,\beta^2}(\bar x) := \left\{
    \begin{array}{ll}
      N_{P^1}(F_i(\bar x)) & \textrm{if } i \in I^{0+}(\bar x) \cup \beta^1, \\
      N_{P^2}(F_i(\bar x)) & \textrm{if } i \in I^{0-}(\bar x) \cup I^{+0}(\bar x) \cup I^{+-}(\bar x) \cup \beta^2.
    \end{array} \right.\]
  Note also that for every $i \in V$ we have
  \begin{equation} \label{eq : MstatConvexPiece}
    \nu_i^{I^{00}(\bar x),\emptyset}(\bar x) \subset N_{P}(F_i(\bar x)).
  \end{equation}
  
\section{Solving the auxiliary problem}

  In this section, we describe an algorithm for solving quadratic problems with vanishing constraints of the type
    \begin{equation} \label{eq : deltaOldprob}
      \begin{array}{lrll}
	QPVC(\rho) & \min\limits_{(s,\delta) \in \R^{n+1}} & \frac{1}{2} s^T B s + \nabla f s + \rho ( \frac{1}{2} \delta^2 + \delta) & \\
	& \textrm{subject to } & (1 - \delta) h_i + \nabla h_i s = 0 & i \in E, \\
	&& (1 - \Beta_i^g \delta) g_i + \nabla g_i s \leq 0 & i \in I, \\
	&& (1 - \Beta_i^H \delta) H_i + \nabla H_i s \geq 0, & \\
	&& \left( (1 - \Beta_i^G \delta) G_i + \nabla G_i s \right) \ \left( (1 - \Beta_i^H \delta) H_i + \nabla H_i s \right) \leq 0 & i \in V, \\
	&& - \delta \leq 0. &
      \end{array}
    \end{equation}
  Here the vector $\Beta = (\Beta^g,\Beta^G,\Beta^H) \in \{ 0, 1 \}^{\vert I \vert + 2 \vert V \vert} =: \mathcal{B}$ 
  is chosen at the beginning of the algorithm such that some feasible point is known in advance, e.g. $(s,\delta)=(0,1)$.
  The parameter $\rho$ has to be chosen sufficiently large and acts like a penalty parameter forcing
  $\delta$ to be near zero at the solution. $B$ is a symmetric positive definite $n \times n$ matrix, $\nabla f$, $\nabla h_i$, $\nabla g_i$,
  $\nabla G_i$, $\nabla H_i$ denote row vectors in $\R^n$ and $h_i,g_i,G_i,H_i$ are real numbers.
  Note that this problem is a special case of problem \eqref{eq : genproblem} and consequently the definition
  of $\mathcal{Q}-$ and $\mathcal{Q}_M-$ stationarity as well as the definition of index sets \eqref{eqn : IndexStes}
  remain valid.
  
  It turns out to be much more convenient to operate with a more general notation. Let us denote by $F_i:=(-H_i,G_i)^T$ a vector in $\R^2$,
  by $\nabla F_i := (-\nabla H_i^T,\nabla G_i^T)^T$ a $2 \times n$ matrix and by $P^1 := \{0\} \times \R$ and $P^2:= \R^2_-$ two subsets of $\R^2$.
  Note that for $P$ given by \eqref{eqn : FPdef} it holds that $P = P^1 \cup P^2$.
  The problem \eqref{eq : deltaOldprob} can now be equivalently rewritten in a form
  \begin{equation} \label{eq : deltaprob}
      \begin{array}{lrll}
	QPVC(\rho) & \min\limits_{(s,\delta) \in \R^{n+1}} & \frac{1}{2} s^T B s + \nabla f s + \rho ( \frac{1}{2} \delta^2 + \delta) & \\
	& \textrm{subject to } & (1 - \delta) h_i + \nabla h_i s = 0 & i \in E, \\
	&& (1 - \Beta_i^g \delta) g_i + \nabla g_i s \leq 0 & i \in I, \\
	&& \delta (\Beta_i^H H_i, - \Beta_i^G G_i)^T + F_i + \nabla F_i s \in P & i \in V, \\
	&& - \delta \leq 0. &
      \end{array}
    \end{equation}
  For a given feasible point $(s,\delta)$ for the problem $QPVC(\rho)$ we define the following index sets
    \begin{eqnarray*}
      I^{1}(s,\delta) & := & \{ i \in V \mv \delta (\Beta_i^H H_i, - \Beta_i^G G_i)^T + F_i + \nabla F_i s \in P^1 \setminus P^2 \} =
      I^{0+}(s,\delta), \\
      I^{2}(s,\delta) & := & \{ i \in V \mv \delta (\Beta_i^H H_i, - \Beta_i^G G_i)^T + F_i + \nabla F_i s \in P^2 \setminus P^1 \} =
      I^{+0}(s,\delta) \cup I^{+-}(s,\delta), \\
      I^{0}(s,\delta) & := & \{ i \in V \mv \delta (\Beta_i^H H_i, - \Beta_i^G G_i)^T + F_i + \nabla F_i s \in P^1 \cap P^2 \} =
      I^{0-}(s,\delta) \cup I^{00}(s,\delta),
    \end{eqnarray*}
  where the index sets $I^{0+}(s,\delta)$, $I^{+0}(s,\delta)$, $I^{+-}(s,\delta)$, $I^{0-}(s,\delta)$, $I^{00}(s,\delta)$
  are given by \eqref{eqn : IndexStes}.

  Further, consider the distance function $d$ defined by
  \[d(x,A) := \inf_{y \in A} \norm{x-y}_1,\]
  for $x \in \R^2$ and $A \subset \R^2$.
  The following proposition summarizes some well-known properties of $d$.
  \begin{proposition}
    Let $x \in \R^2$ and $A \subset \R^2$.
      \begin{enumerate}
       \item Let $B \subset \R^2$, then
	     \begin{equation} \label{eq : DistOfCup}
	      d(x, A \cup B) = \min\{ d(x, A), d(x, B) \}.
             \end{equation}
             In particular,
	      \begin{equation} \label{eq : DistToP}
	      d(x,P^1) = (x_1)^+ + (-x_1)^+, \,\, d(x,P^2) = (x_1)^+ + (x_2)^+, \,\,
	      d(x,P) = (x_1)^+ + (\min\{-x_1,x_2\})^+.
	      \end{equation}
       \item $d(\cdot,A) : \R^2 \rightarrow \R^+$ is Lipschitz continuous with Lipschitz modulus $L = 1$ and consequently
	      \begin{equation} \label{eq : DistIfIn}
		d(x,A) \leq d(x+y,A) + \norm{y}_1.
	      \end{equation}
       \item $d(\cdot,A) : \R^2 \rightarrow \R^+$ is convex, provided $A$ is convex.
      \end{enumerate}
  \end{proposition}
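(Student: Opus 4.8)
The plan is to treat the three assertions in turn; each reduces to elementary properties of the $\ell_1$ distance, so the only genuine work lies in verifying the explicit closed forms in \eqref{eq : DistToP}.

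For the first assertion, the identity \eqref{eq : DistOfCup} is immediate from the definition of $d$ as an infimum: since $\{\norm{x-y}_1 \mv y \in A \cup B\}$ is the union of $\{\norm{x-y}_1 \mv y \in A\}$ and $\{\norm{x-y}_1 \mv y \in B\}$, its infimum is the smaller of the two infima. To obtain the formulas in \eqref{eq : DistToP} I would first compute $d(x,P^1)$ and $d(x,P^2)$ directly. Because the $\ell_1$ norm separates over coordinates, the closest point of $P^1 = \{0\} \times \R$ constrains only the first coordinate, giving $d(x,P^1) = \vert x_1 \vert = (x_1)^+ + (-x_1)^+$, while the closest point of $P^2 = \R^2_-$ is obtained coordinatewise (leave a nonpositive coordinate fixed, move a positive one to $0$), giving $d(x,P^2) = (x_1)^+ + (x_2)^+$. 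Since $P = P^1 \cup P^2$ by the remark following \eqref{eq : deltaprob}, \eqref{eq : DistOfCup} then yields $d(x,P) = \min\{\vert x_1 \vert,\,(x_1)^+ + (x_2)^+\}$.

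The step I expect to require the most care is identifying this minimum with the stated expression $(x_1)^+ + (\min\{-x_1,x_2\})^+$, and I would settle it by a short case distinction on the sign of $x_1$. If $x_1 > 0$, then $(x_1)^+ = x_1$ dominates and the minimum equals $x_1$, which matches the formula because $\min\{-x_1,x_2\} \le -x_1 < 0$ forces the second summand to vanish. If $x_1 \le 0$, then $(x_1)^+ = 0$ and $\vert x_1 \vert = -x_1$, so the minimum becomes $\min\{-x_1,(x_2)^+\}$, which one checks equals $(\min\{-x_1,x_2\})^+$ by a further split on the sign of $x_2$. This reconciliation of the $(\cdot)^+$ and $\min$ operations is routine bookkeeping but is the one place where a slip is easy.

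The second and third assertions are standard and I would dispatch them quickly. For the Lipschitz bound, the triangle inequality $\norm{x-y}_1 \le \norm{x-x'}_1 + \norm{x'-y}_1$ followed by taking the infimum over $y \in A$ gives $d(x,A) \le \norm{x-x'}_1 + d(x',A)$; swapping $x$ and $x'$ yields $\vert d(x,A)-d(x',A) \vert \le \norm{x-x'}_1$, i.e.\ modulus $L=1$, and the substitution $x' = x+y$ produces \eqref{eq : DistIfIn}. For convexity when $A$ is convex, I would view $d(\cdot,A)$ as the partial minimization $\inf_y\{\norm{x-y}_1 \mv y \in A\}$ of the jointly convex map $(x,y) \mapsto \norm{x-y}_1$ over the convex set $A$, infimal projections of this kind being convexity-preserving. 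Equivalently, for $x,x'$ and $\lambda \in [0,1]$ with near-minimizers $y,y' \in A$, the combination $\lambda y + (1-\lambda)y' \in A$ together with the estimate on $\norm{\lambda(x-y)+(1-\lambda)(x'-y')}_1$ delivers the convexity inequality in the limit.
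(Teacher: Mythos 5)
Your proof is correct. The paper states this proposition without proof, introducing it as a summary of well-known properties of the distance function, and your arguments (infimum over a union, coordinatewise computation of the $\ell_1$ projections onto $P^1$ and $P^2$, the sign case distinction reconciling $\min\{\vert x_1\vert,(x_1)^++(x_2)^+\}$ with $(x_1)^++(\min\{-x_1,x_2\})^+$, the triangle-inequality Lipschitz bound, and convexity via infimal projection) are exactly the standard ones the authors leave to the reader.
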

  
  Due to the disjunctive structure of the auxiliary problem we can subdivide it into several QP-pieces.
  For every partition $(V_1,V_2) \in \mathcal{P}(V)$ we define the convex quadratic problem
  \begin{equation} \label{eq : deltaMprob}
      \begin{array}{lrll}
	QP(\rho, V_1) & \min\limits_{(s,\delta) \in \mathbb{R}^{n+1}} & \frac{1}{2} s^T B s + \nabla f s + \rho (\frac{1}{2} \delta^2 + \delta) & \\
	& \textrm{subject to } & (1 - \delta) h_i + \nabla h_i s = 0 & i \in E, \\
	&& (1 - \Beta_i^g \delta) g_i + \nabla g_i s \leq 0 & i \in I, \\
	&& \delta (\Beta_i^H H_i, - \Beta_i^G G_i)^T + F_i + \nabla F_i s \in P^1 & i \in V_1, \\
	&& \delta (\Beta_i^H H_i, - \Beta_i^G G_i)^T + F_i + \nabla F_i s \in P^2 & i \in V_2, \\
	&& - \delta \leq 0. &
      \end{array}
  \end{equation}
  Since $(V_1,V_2)$ form a partition of $V$ it is sufficient to define $V_1$ since $V_2$ is given by $V_2 = V \setminus V_1$.
  
  At the solution $(s,\delta)$ of $QP(\rho, V_1)$ there is a corresponding multiplier $\lambda(\rho, V_1) = (\lambda^h,\lambda^g,\lambda^H,\lambda^G)$ 
  and a number $\lambda^{\delta} \geq 0$ with $\lambda^{\delta} \delta = 0$ fulfilling the KKT conditions:
  \begin{eqnarray} \label{eqn : FirstOrder1}
      B s + \nabla f^T + \sum_{i \in E} \lambda_i^h \nabla h_i^T + \sum_{i \in I} \lambda_i^g \nabla g_i^T 
      + \sum_{i \in V} \nabla F_i^T \lambda_i^F & = & 0, \\ \label{eqn : FirstOrder2}
      \rho (\delta + 1) - \lambda^{\delta} - \sum_{i \in E} \lambda_i^h h_i - \sum_{i \in I} \lambda_i^g \Beta_i^g g_i 
      + \sum_{i \in V} (\Beta_i^H H_i, - \Beta_i^G G_i) \lambda_i^F & = & 0, \\
      \label{eqn : ComplemCon1}
      \lambda_i^g ((1 - \Beta_i^g \delta) g_i + \nabla g_i s) = 0, \,\, \lambda_i^g \geq 0, && i \in I, \\ \label{eqn : ComplemCon2}
      \lambda_i^F \in N_{P^1}(\delta (\Beta_i^H H_i, - \Beta_i^G G_i)^T + F_i + \nabla F_i s), && i \in V_1, \\ \label{eqn : ComplemCon3}
      \lambda_i^F \in N_{P^2}(\delta (\Beta_i^H H_i, - \Beta_i^G G_i)^T + F_i + \nabla F_i s), && i \in V_2,
    \end{eqnarray}
  where $\lambda_i^F := (\lambda_i^H,\lambda_i^G)^T$ for $i \in V$. Since $P^1$ and $P^2$ are convex sets,
  the above normal cones are given by \eqref{eq : convexNC}.
    
  The definition of the problem $QP(\rho, V_1)$ allows the following interpretation of $\mathcal{Q}$-stationarity,
  which is a direct consequence of \eqref{eqn : QstatLNC1} and \eqref{eqn : QstatLNC2}.
  \begin{lemma} \label{Lem : QstatQPpiece}
    A point $(s,\delta)$ is $\mathcal{Q}$-stationary with respect to $(\beta^1,\beta^2) \in \mathcal{P}(I^{00}(s,\delta))$ for \eqref{eq : deltaprob}
    if and only if it is the solution of the convex
    problems $QP(\rho, I^{1}(s,\delta) \cup \beta^1)$ and $QP(\rho, I^{1}(s,\delta) \cup \beta^2)$.
  \end{lemma}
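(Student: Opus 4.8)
The plan is to reduce $\mathcal Q$-stationarity of $(s,\delta)$ to a pair of KKT systems and then match those systems, index by index, with the two convex pieces. First I would record that each program \eqref{eq : deltaMprob} is convex: the objective is strongly convex since $B$ is positive definite and $\rho>0$, and every constraint is affine in $(s,\delta)$, the memberships in $P^1=\{0\}\times\R$ and $P^2=\R^2_-$ amounting to one affine equation and two affine inequalities, respectively. Since all constraints are affine, the KKT conditions \eqref{eqn : FirstOrder1}--\eqref{eqn : ComplemCon3} together with $\lambda^\delta\geq0$, $\lambda^\delta\delta=0$ are both necessary and sufficient for a feasible $(s,\delta)$ to solve $QP(\rho,V_1)$. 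This reduces the claim to comparing two multiplier systems with the conditions defining $\mathcal Q$-stationarity.

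Next I would carry out the index bookkeeping, which is the heart of the argument. Since $V=I^1(s,\delta)\cup I^2(s,\delta)\cup I^0(s,\delta)$ with $I^1=I^{0+}$, $I^2=I^{+0}\cup I^{+-}$, $I^0=I^{0-}\cup I^{00}$, and since $(\beta^1,\beta^2)$ partitions $I^{00}(s,\delta)$, the choice $V_1=I^1(s,\delta)\cup\beta^1$ forces $V_2=V\setminus V_1=I^{0-}\cup I^2\cup\beta^2$. The normal-cone conditions \eqref{eqn : ComplemCon2}--\eqref{eqn : ComplemCon3} then read $\lambda_i^F\in N_{P^1}(\cdot)$ for $i\in I^{0+}\cup\beta^1$ and $\lambda_i^F\in N_{P^2}(\cdot)$ for $i\in I^{0-}\cup I^2\cup\beta^2$, which is precisely $\lambda_i^F\in\nu_i^{\beta^1,\beta^2}(s,\delta)$ for every $i\in V$ by the definition of $\nu_i^{\beta^1,\beta^2}$. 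Interchanging the roles of $\beta^1$ and $\beta^2$, the piece $QP(\rho,I^1(s,\delta)\cup\beta^2)$ yields instead $\lambda_i^F\in\nu_i^{\beta^2,\beta^1}(s,\delta)$.

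I would then observe that the remaining KKT relations \eqref{eqn : FirstOrder1}, \eqref{eqn : FirstOrder2}, \eqref{eqn : ComplemCon1} and $\lambda^\delta\geq0$, $\lambda^\delta\delta=0$ do not involve $V_1$ at all. Computing the gradient of the objective and of the constraints of \eqref{eq : deltaprob} with respect to $(s,\delta)$ shows that \eqref{eqn : FirstOrder1} and \eqref{eqn : FirstOrder2} are exactly the $s$- and $\delta$-components of the stationarity equation \eqref{eq : StatEq} for $QPVC(\rho)$, while \eqref{eqn : ComplemCon1} and the complementarity in $\delta$ are the $g$- and $\delta$-part of \eqref{eq : WeakStat} (and thus the non-$\nu$ slots of \eqref{eqn : QstatLNC1}--\eqref{eqn : QstatLNC2}). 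Hence the full KKT system of $QP(\rho,I^1(s,\delta)\cup\beta^1)$ is the conjunction of \eqref{eq : StatEq} and \eqref{eqn : QstatLNC1}, and that of $QP(\rho,I^1(s,\delta)\cup\beta^2)$ is the conjunction of \eqref{eq : StatEq} and \eqref{eqn : QstatLNC2}.

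Finally I would assemble the equivalence. For feasibility, any solution of either piece is feasible for \eqref{eq : deltaprob} because $P=P^1\cup P^2$; conversely a feasible point of \eqref{eq : deltaprob} is feasible for both pieces, since its index-set membership places each vector $\delta(\Beta_i^H H_i,-\Beta_i^G G_i)^T+F_i+\nabla F_i s$ in the required $P^1$ or $P^2$. Thus $(s,\delta)$ solves both $QP(\rho,I^1(s,\delta)\cup\beta^1)$ and $QP(\rho,I^1(s,\delta)\cup\beta^2)$ if and only if it is feasible and admits multipliers $\overline\lambda$, $\underline\lambda$ satisfying \eqref{eq : StatEq} together with \eqref{eqn : QstatLNC1} and \eqref{eqn : QstatLNC2}, which by the stated reformulation of \eqref{eq : WeakStat}--\eqref{eq : QStatCond} is exactly $\mathcal Q$-stationarity with respect to $(\beta^1,\beta^2)$. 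The main obstacle is the bookkeeping of the second paragraph, namely correctly computing $V_2$ and confirming the identification with $\nu_i^{\beta^1,\beta^2}$, while keeping track of the extra variable $\delta$ and its multiplier $\lambda^\delta$, which are present in $QPVC(\rho)$ but absent from the generic templates \eqref{eq : StatEq}--\eqref{eq : QStatCond}.
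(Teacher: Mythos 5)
Your proof is correct and follows exactly the route the paper intends: the paper states this lemma as ``a direct consequence of \eqref{eqn : QstatLNC1} and \eqref{eqn : QstatLNC2}'' without further detail, and your argument (convexity making the KKT system \eqref{eqn : FirstOrder1}--\eqref{eqn : ComplemCon3} necessary and sufficient, plus the index bookkeeping identifying $V_2 = I^{0-}\cup I^2\cup\beta^2$ and matching the normal-cone conditions with $\nu_i^{\beta^1,\beta^2}$ and $\nu_i^{\beta^2,\beta^1}$) is precisely the omitted verification, carried out correctly.
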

  Moreover, since for $V_1 = I^{1}(s,\delta) \cup I^{00}(s,\delta)$ the conditions \eqref{eqn : ComplemCon2},\eqref{eqn : ComplemCon3}
  read as $\lambda_i^F \in \nu_i^{I^{00}(s,\delta),\emptyset}(s,\delta)$, it follows from \eqref{eq : MstatConvexPiece}
  that if a point $(s,\delta)$ is the solution of $QP(\rho, I^{1}(s,\delta) \cup I^{00}(s,\delta))$ then it is M-stationary for \eqref{eq : deltaprob}.
  
  Finally, let us denote by $\bar\delta(V_1)$ the objective value at a solution of the problem
  \begin{equation} \label{eq : MinDelProb}
    \min_{(s,\delta) \in \mathbb{R}^{n+1}} \,\, \delta \qquad \textrm{ subject to the constraints of \eqref{eq : deltaMprob}.}
  \end{equation}
  An outline of the algorithm for solving $QPVC(\rho)$ is as follows.
  
    \begin{algorithm}[Solving the QPVC] \label{AlgSol} \rm \mbox{}
    
    Let $\zeta \in (0,1)$, $\bar{\rho} > 1$ and $\rho > 0$ be given.
    
    \Itl1{1:} Initialize:
    \Itl2{}   Set the starting point $(s^0, \delta^0) := (0,1)$, define the vector $\Beta$ by
    \Itl3{}   \begin{equation} \label{eq : DefBet}
		\Beta_i^{g} := \left\{
		\begin{array}{ll}
		  1 & \quad \textrm{if } g_i > 0, \\
		  0 & \quad \textrm{if } g_i \leq 0,
		\end{array} \right. \qquad
		(\Beta_i^{H}, \Beta_i^{G}) := \left\{
		\begin{array}{ll}
		  (0,0) & \quad \textrm{if } d(F_i,P) = 0, \\
		  (1,0) & \quad \textrm{if } 0 < d(F_i,P^1) \leq d(F_i,P^2), \\
		  (0,1) & \quad \textrm{if } 0 < d(F_i,P^2) < d(F_i,P^1)
		\end{array} \right.
	      \end{equation}
    \Itl3{}   and set the partition $V_1^1 := I^{1}(s^0,\delta^0)$ and the counter of pieces $t:=0$.
    \Itl2{}   Compute $(s^{1}, \delta^{1})$ as the solution and $\lambda^{1}$ as the corresponding multiplier
    \Itl3{}   of the convex problem $QP(\rho, V_1^{1})$ and set $t:=1$.
    \Itl2{}   If $\delta^1 > \delta^0$, perform a restart: set $\rho := \rho \bar \rho$ and go to step 1.
    \Itl1{2:} Improvement step:
    \Itl2{}   {\tt while} $(s^t,\delta^t)$ is not a solution of the following four convex problems:
	      \begin{eqnarray} \label{eqn : b1b2Stat}
		QP(\rho, I^{1}(s^t,\delta^t) \cup (I^{00}(s^t,\delta^t) \cap V_1^t)), &&
		QP(\rho, I^{1}(s^t,\delta^t) \cup (I^{00}(s^t,\delta^t) \setminus V_1^t)), \\
		\label{eqn : I00EmptyStat}
		QP(\rho, I^{1}(s^t,\delta^t)), &&
		QP(\rho, I^{1}(s^t,\delta^t) \cup I^{00}(s^t,\delta^t)).
	      \end{eqnarray}
    \Itl3{}   Compute $(s^{t+1}, \delta^{t+1})$ as the solution and $\lambda^{t+1}$ as the corresponding multiplier
    \Itl4{}   of the first problem with $(s^{t+1}, \delta^{t+1}) \neq (s^{t}, \delta^{t})$, set $V_1^{t+1}$ to the
    \Itl4{}   corresponding index set and increase the counter $t$ of pieces by $1$.
    \Itl3{}   If $\delta^t > \delta^{t-1}$, perform a restart: set $\rho := \rho \bar \rho$ and go to step 1.
    \Itl1{3:} Check for successful termination:
    \Itl2{}   If $\delta^t < \zeta$ set $N:=t$, stop the algorithm and return.
    \Itl1{4:} Check the degeneracy:
    \Itl2{}   If the non-degeneracy condition
	      \begin{equation} \label{eq : NonDegen}
		\min \{ \bar\delta(I^{1}(s^t,\delta^t)), \bar\delta(I^{1}(s^t,\delta^t) \cup I^{00}(s^t,\delta^t)) \} < \zeta
	      \end{equation}
    \Itl3{}   is fulfilled, perform a restart: set $\rho := \rho \bar \rho$ and go to step 1.
    \Itl2{}   Else stop the algorithm because of degeneracy. 
    \end{algorithm}
  
  The selection of the index sets in step 2 is motivated by Lemma \ref{Lem : QstatQPpiece},
  since if $(s,\delta)$ is the solution of convex problems \eqref{eqn : b1b2Stat}, then it is
  $\mathcal{Q}$-stationary and if $(s,\delta)$ is also the
  solution of convex problems \eqref{eqn : I00EmptyStat}, then it is even $\mathcal{Q}_M$-stationary
  for problem \eqref{eq : deltaprob}.
  
  We first summarize some consequences of the Initialization step.
  \begin{proposition} \label{Prop : InitStep}
  \begin{enumerate}
   \item Vector $\Beta$ is chosen in a way that for all $i \in V$ it holds that
      \begin{equation} \label{eq : betaeff}
	\norm{(\Beta_i^H H_i, - \Beta_i^G G_i)^T}_1 = d(F_i, P).
      \end{equation}
   \item Partition $(V_1^1,V_2^1)$ is chosen in a way that for $j=1,2$ it holds that
   \begin{equation} \label{eq : InitPart}
    i \in V_j^1 \, \textrm{ implies } \, d(F_i, P) = d(F_i, P^j).
   \end{equation}
  \end{enumerate}
  \end{proposition}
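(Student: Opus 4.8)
The plan is to treat both assertions simultaneously by introducing, for each $i \in V$, the shifted point
\[
q_i := F_i + (\Beta_i^H H_i, -\Beta_i^G G_i)^T
\]
and proving two facts: (a) $\norm{(\Beta_i^H H_i, -\Beta_i^G G_i)^T}_1 = d(F_i,P)$, which is exactly assertion (1); and (b) $q_i \in P$. Since $\norm{F_i - q_i}_1 = \norm{(\Beta_i^H H_i, -\Beta_i^G G_i)^T}_1$, fact (a) says the displacement from $F_i$ to $q_i$ has $\ell_1$-length precisely $d(F_i,P)$, so together with (b) the point $q_i$ is a nearest point of $P$ to $F_i$. This nearest-point property is what will drive assertion (2).

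First I would verify (a) and (b) by running through the three cases in the definition \eqref{eq : DefBet} of $\Beta$, using the explicit formulas \eqref{eq : DistToP}, namely $d(F_i,P^1)=|H_i|$ and $d(F_i,P^2)=(-H_i)^+ + (G_i)^+$ (recall $F_i = (-H_i,G_i)^T$), together with $d(F_i,P) = \min\{d(F_i,P^1),d(F_i,P^2)\}$ from \eqref{eq : DistOfCup}. In the case $(\Beta_i^H,\Beta_i^G)=(0,0)$ one has $d(F_i,P)=0$, hence $F_i \in P$ (as $P$ is closed) and $q_i = F_i$, giving (a) and (b) at once. In the case $(1,0)$ the displacement norm is $|H_i| = d(F_i,P^1) = d(F_i,P)$ because $d(F_i,P^1)\le d(F_i,P^2)$, and $q_i = (0,G_i)^T \in P^1 \subset P$.

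The delicate case is $(0,1)$, where $0 < d(F_i,P^2) < d(F_i,P^1)$, and this is where I expect the main obstacle. Here the displacement norm is $|G_i|$, whereas $d(F_i,P) = d(F_i,P^2) = (-H_i)^+ + (G_i)^+$, so I must show these agree. The key step is to extract sign information from the strict inequality: since $(G_i)^+ \ge 0$, it yields $(-H_i)^+ < |H_i|$, which forces $H_i \ge 0$ (otherwise $(-H_i)^+ = |H_i|$, a contradiction), so $(-H_i)^+ = 0$; then $0 < d(F_i,P^2) = (G_i)^+$ forces $G_i > 0$. Consequently $(-H_i)^+ + (G_i)^+ = G_i = |G_i|$, proving (a), and $q_i = (-H_i,0)^T \in \R^2_- = P^2 \subset P$ since $-H_i \le 0$, proving (b).

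Finally, for assertion (2) I would argue via the nearest-point property. Evaluating at $(s^0,\delta^0)=(0,1)$ (so $\nabla F_i s^0 = 0$ and $\delta^0=1$), we have $V_1^1 = I^1(s^0,\delta^0) = \{i \in V \mid q_i \in P^1 \setminus P^2\}$. Thus $i \in V_1^1$ gives $q_i \in P^1$, and combining with (b), $P^1 \subset P$, and (a) produces the sandwich $d(F_i,P^1) \le \norm{F_i - q_i}_1 = d(F_i,P) \le d(F_i,P^1)$, whence $d(F_i,P)=d(F_i,P^1)$. For $i \in V_2^1 = V \setminus V_1^1$ we have $q_i \in P$ but $q_i \notin P^1 \setminus P^2$; since $P \setminus (P^1 \setminus P^2) = P^2$, this gives $q_i \in P^2$, and the identical sandwiching argument with $P^2$ in place of $P^1$ yields $d(F_i,P) = d(F_i,P^2)$, completing the plan.
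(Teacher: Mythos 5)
Your proof is correct and follows essentially the same route as the paper's: the case analysis on the definition \eqref{eq : DefBet} via the explicit formulas \eqref{eq : DistToP} and \eqref{eq : DistOfCup} for assertion (1), and the Lipschitz estimate \eqref{eq : DistIfIn} combined with $d(F_i,P)=\min\{d(F_i,P^1),d(F_i,P^2)\}$ for assertion (2); your explicit verification that $q_i\in P$ merely makes precise the feasibility of $(s^0,\delta^0)=(0,1)$ that the paper uses implicitly. Incidentally, your sign analysis in the third case (concluding $H_i>0$ and $G_i>0$) is more careful than the paper's own proof, which asserts $H_i<0<G_i$ there --- incompatible with $0<d(F_i,P^2)<d(F_i,P^1)$ --- so you have also quietly repaired a typo.
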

  \begin{proof}
    1. If $d(F_i, P) = 0$ we have $(\Beta_i^H, \Beta_i^G) = (0,0)$
    and \eqref{eq : betaeff} obviously holds. If $0 < d(F_i,P^1) \leq d(F_i,P^2)$ we have $(\Beta_i^H, \Beta_i^G) = (1,0)$ and we obtain
    \[\norm{(\Beta_i^H H_i, - \Beta_i^G G_i)^T}_1 = \vert H_i \vert = d(F_i,P^1) = d(F_i,P) \]
    by \eqref{eq : DistToP} and \eqref{eq : DistOfCup}. Finally, if $0 < d(F_i,P^2) < d(F_i,P^1)$ we have
    $H_i < 0 < G_i$, $(\Beta_i^H, \Beta_i^G) = (0,1)$ and thus
    \[\norm{(\Beta_i^H H_i, - \Beta_i^G G_i)^T}_1 = \vert G_i \vert = (H_i)^+ + (G_i)^+ = d(F_i,P^2) = d(F_i,P) \]
    follows again by \eqref{eq : DistToP} and \eqref{eq : DistOfCup}.
    
    2. If $(\Beta_i^H H_i, - \Beta_i^G G_i)^T + F_i \in P^j$ for some $i \in V$ and $j = 1,2$,
    by \eqref{eq : DistIfIn} and \eqref{eq : betaeff} we obtain
    \[d(F_i,P^j) \leq \norm{(\Beta_i^H H_i, - \Beta_i^G G_i)^T}_1 = d(F_i, P) \]
    and consequently $d(F_i,P^j) = d(F_i,P)$, because of \eqref{eq : DistOfCup}.
    Hence we conclude that $i \in (I^{j}(s^0,\delta^0) \cup I^{0}(s^0,\delta^0))$ implies $d(F_i,P^j) = d(F_i,P)$ for $j = 1,2$
    and the statement now follows from the fact that $V_1^1 = I^{1}(s^0,\delta^0)$ and $V_2^1 = I^{2}(s^0,\delta^0) \cup I^{0}(s^0,\delta^0)$.
  \end{proof}
    
  The following lemma plays a crucial part in proving the finiteness of the Algorithm \ref{AlgSol}.
  
  \begin{lemma} \label{Lem : RhoMinDelta}
    For each partition $(V_1,V_2) \in \mathcal{P}(V)$ there exists a positive constant $C_{\rho}(V_1)$ such that for every
    $\rho \geq C_{\rho}(V_1)$ the solution $(s,\delta)$ of $QP(\rho,V_1)$ fulfills $\delta = \bar\delta(V_1)$.
  \end{lemma}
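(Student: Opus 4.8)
The plan is to eliminate the variable $s$ and reduce $QP(\rho,V_1)$ to a one-dimensional convex minimization in $\delta$, then exploit that the penalty term is strictly increasing for $\delta\ge 0$. Write $q(s):=\frac12 s^TBs+\nabla f s$ and $\phi(\delta):=\frac12\delta^2+\delta$, and let $\mathcal C\subset\R^{n+1}$ denote the feasible set of $QP(\rho,V_1)$. The crucial observation is that, since $P^1$ and $P^2$ are polyhedral, \emph{all} constraints in \eqref{eq : deltaMprob} are affine, so $\mathcal C$ is a polyhedron not depending on $\rho$; moreover $-\delta\le 0$ forces $\delta\ge 0$ on $\mathcal C$, hence $\bar\delta(V_1)=\min_{(s,\delta)\in\mathcal C}\delta\ge 0$. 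Introducing the value function $v(\delta):=\min\{q(s)\mv(s,\delta)\in\mathcal C\}$, which is convex (a partial minimization of the jointly convex map $(s,\delta)\mapsto q(s)+\iota_{\mathcal C}(s,\delta)$) and finite on the interval $D$ obtained by projecting $\mathcal C$ onto the $\delta$-axis, $QP(\rho,V_1)$ is equivalent to minimizing $g(\delta):=v(\delta)+\rho\phi(\delta)$ over $D$, whose left endpoint is $\bar\delta(V_1)$. Since $B\succ 0$ and $\rho>0$, $g$ is strictly convex, so its minimizer $\delta_\rho$ is unique and coincides with the $\delta$-component of the solution of $QP(\rho,V_1)$.

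Because every $(s,\delta)\in\mathcal C$ satisfies $\delta\ge\bar\delta(V_1)$, we always have $\delta_\rho\ge\bar\delta(V_1)$, so it suffices to rule out $\delta_\rho>\bar\delta(V_1)$. For a convex function on an interval the left endpoint is the minimizer exactly when the right-hand derivative there is nonnegative; here this reads $v'_+(\bar\delta(V_1))+\rho\,\phi'(\bar\delta(V_1))\ge 0$. As $\phi'(\bar\delta(V_1))=\bar\delta(V_1)+1\ge 1>0$, once $v'_+(\bar\delta(V_1))$ is known to be finite I may set
\[ C_{\rho}(V_1) := \max\Big\{1,\ \frac{-v'_+(\bar\delta(V_1))}{\bar\delta(V_1)+1}\Big\}, \]
and then every $\rho\ge C_{\rho}(V_1)$ yields $g'_+(\bar\delta(V_1))\ge 0$, hence $\delta_\rho=\bar\delta(V_1)$, as claimed.

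The heart of the proof is therefore to show that $v$ has a finite right-hand derivative at the left endpoint $\bar\delta(V_1)$, i.e. that $v$ does not become infinitely steep there, and this is exactly where polyhedrality enters. The face $\mathcal F:=\{(s,\delta)\in\mathcal C\mv\delta=\bar\delta(V_1)\}$ on which $\delta$ is minimized is again a polyhedron, described by the constraints of $\mathcal C$ together with $\delta\le\bar\delta(V_1)$; by Hoffman's error bound there is a constant $\kappa>0$ with $\operatorname{dist}((s,\delta),\mathcal F)\le\kappa(\delta-\bar\delta(V_1))$ for all $(s,\delta)\in\mathcal C$. For $\delta$ slightly above $\bar\delta(V_1)$ let $s_\delta$ attain $v(\delta)$; coercivity of $q$ (from $B\succ 0$) keeps these minimizers in a fixed bounded set as $\delta\downarrow\bar\delta(V_1)$, so $q$ is Lipschitz with a uniform modulus $L_q$ near them. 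Using the error bound I select $(s',\bar\delta(V_1))\in\mathcal F$ with $\norm{s'-s_\delta}\le\kappa(\delta-\bar\delta(V_1))$, whence
\[ v(\bar\delta(V_1))\le q(s')\le q(s_\delta)+L_q\kappa\,(\delta-\bar\delta(V_1))=v(\delta)+L_q\kappa\,(\delta-\bar\delta(V_1)). \]
This bounds the difference quotients from below by $-L_q\kappa$, and letting $\delta\downarrow\bar\delta(V_1)$ gives $v'_+(\bar\delta(V_1))\ge -L_q\kappa>-\infty$.

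The step I expect to be the main obstacle is precisely this finiteness of $v'_+(\bar\delta(V_1))$. Mere convergence $\delta_\rho\to\bar\delta(V_1)$ as $\rho\to\infty$ is easy, since the penalty slope $\rho(\bar\delta(V_1)+1)$ eventually dominates any fixed quantity; but obtaining \emph{exact} equality for all large $\rho$ requires that the value function grows at most linearly as $\delta$ leaves its minimal level, which without the polyhedral structure could fail. I would also dispose of the degenerate case in which $D$ reduces to the single point $\{\bar\delta(V_1)\}$ (then $\delta_\rho=\bar\delta(V_1)$ trivially for every $\rho>0$), and note that feasibility of $\mathcal C$ is implicitly assumed, since otherwise $\bar\delta(V_1)$ is undefined.
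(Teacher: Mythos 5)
Your proof is correct, but it follows a genuinely different route from the paper's. The paper argues in purely primal--dual terms: it takes the point $(\bar s(V_1),\bar\delta(V_1))$, where $\bar s(V_1)$ minimizes the quadratic objective over the slice $\delta=\bar\delta(V_1)$ (problem \eqref{eq : deltaZeroMprob}) with multiplier $\bar\lambda(V_1)$, takes a second multiplier $\lambda(V_1)$ from the linear program \eqref{eq : MinDelProb}, and verifies the KKT system of $QP(\rho,V_1)$ at this point for the combination $\tilde\lambda=\bar\lambda(V_1)+\alpha\lambda(V_1)$, where $\alpha$ is exactly the residual of \eqref{eqn : FirstOrder2} and is nonnegative once $\rho\geq C_\rho(V_1)$; convexity then makes the KKT conditions sufficient. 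You instead marginalize out $s$, reduce to a one-dimensional convex problem in $\delta$, and show the penalty is exact at the left endpoint because the value function $v$ has a finite right-hand slope there, which you obtain from Hoffman's error bound for the polyhedral feasible set together with local Lipschitzness of the quadratic on the (bounded, by coercivity and convexity of $v$) set of slice-minimizers. Both arguments are sound and both tacitly assume the feasibility needed for $\bar\delta(V_1)$ to be defined, which you rightly flag. The paper's construction yields a threshold written directly in terms of the computed multipliers and stays inside the KKT machinery \eqref{eqn : FirstOrder1}--\eqref{eqn : ComplemCon3} already used throughout Section~3; your argument is more geometric and makes transparent \emph{why} exactness (rather than mere convergence $\delta_\rho\to\bar\delta(V_1)$) holds, namely polyhedrality. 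The two thresholds are morally dual descriptions of the same quantity, since $-v'_+(\bar\delta(V_1))$ is a marginal value expressible through the multipliers of \eqref{eq : deltaZeroMprob}. One small point worth spelling out if you write this up: the boundedness of the minimizers $s_\delta$ as $\delta\downarrow\bar\delta(V_1)$ needs the upper bound $v(\delta)\leq\max\{v(\bar\delta(V_1)),v(b)\}$ coming from convexity of $v$ on the interval $D$, which you use implicitly.
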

  \begin{proof}
    Let $(s(V_1),\delta(V_1))$ denote a solution of \eqref{eq : MinDelProb}. Since $\delta(V_1)=\bar\delta(V_1)$, it follows that the problem
    \begin{equation} \label{eq : deltaZeroMprob}
      \begin{array}{rll}
	\min\limits_{s \in \mathbb{R}^{n}} & \frac{1}{2} s^T B s + \nabla f s & \\
	\textrm{subject to } & (1 - \bar\delta(V_1)) h_i + \nabla h_i s = 0 & i \in E, \\
	& (1 - \Beta_i^g \bar\delta(V_1)) g_i + \nabla g_i s \leq 0 & i \in I, \\
	& \bar\delta(V_1) (\Beta_i^H H_i, - \Beta_i^G G_i)^T + F_i + \nabla F_i s \in P^1 & i \in V_1, \\
	& \bar\delta(V_1) (\Beta_i^H H_i, - \Beta_i^G G_i)^T + F_i + \nabla F_i s \in P^2 & i \in V_2
      \end{array}
    \end{equation}
    is feasible and by $\bar s(V_1)$ we denote the solution of this problem and by $\bar \lambda(V_1)$ the corresponding multiplier.
    Further, $(\bar s(V_1),\bar\delta(V_1))$ is a solution of \eqref{eq : MinDelProb} and by $\lambda(V_1)$
    we denote the corresponding multiplier.
    
    Then, triple $(\bar s(V_1),\bar\delta(V_1))$ and $\bar \lambda(V_1)$ fulfills \eqref{eqn : FirstOrder1}
    and \eqref{eqn : ComplemCon1}-\eqref{eqn : ComplemCon3}.
    Moreover, triple $(\bar s(V_1),\bar\delta(V_1))$ and $\lambda(V_1)$ fulfills \eqref{eqn : ComplemCon1}-\eqref{eqn : ComplemCon3} and
    \begin{eqnarray} \label{eqn : FirstOrder1Small}
      \sum_{i \in E} \lambda(V_1)_i^h \nabla h_i^T + \sum_{i \in I} \lambda(V_1)_i^g \nabla g_i^T 
      + \sum_{i \in V} \nabla F_i^T \lambda(V_1)_i^F & = & 0, \\ \label{eqn : FirstOrder2Small}
      1 - \lambda^{\delta} - \sum_{i \in E} \lambda(V_1)_i^h h_i - \sum_{i \in I} \lambda(V_1)_i^g \Beta_i^g g_i 
      + \sum_{i \in V} (\Beta_i^H H_i, - \Beta_i^G G_i) \lambda(V_1)_i^F & = & 0
    \end{eqnarray}
    for some $\lambda^{\delta} \geq 0$ with $\lambda^{\delta} \bar\delta(V_1) = 0$.
    
    Let $C_{\rho}(V_1)$ be a positive constant such that for all $\rho \geq C_{\rho}(V_1)$ we have
    \[\alpha := \rho(\bar\delta(V_1) + 1) - \sum_{i \in E} \bar \lambda(V_1)_i^h h_i - \sum_{i \in I} \bar \lambda(V_1)_i^g \Beta_i^g g_i 
      + \sum_{i \in V} (\Beta_i^H H_i, - \Beta_i^G G_i) \bar \lambda(V_1)_i^F \geq 0\]
    and set $\tilde \lambda^{\delta} := \alpha \lambda^{\delta} \geq 0$ and $\tilde \lambda := \bar \lambda(V_1) + \alpha \lambda(V_1)$.
    We will now show that for such $\rho$ it holds that $(\bar s(V_1),\bar\delta(V_1))$ is the solution of $QP(\rho,V_1)$.
    
    Clearly, $\tilde \lambda^{\delta} \bar\delta(V_1) = \alpha \lambda^{\delta} \bar\delta(V_1) = 0$ and the triple
    $(\bar s(V_1),\bar\delta(V_1))$ and $\tilde \lambda$ also fulfills \eqref{eqn : FirstOrder1} due to \eqref{eqn : FirstOrder1Small}
    and it fulfills \eqref{eqn : ComplemCon1}-\eqref{eqn : ComplemCon3} due to the convexity of the normal cones. Moreover, taking into account
    the definitions of $\alpha$, $\tilde \lambda^{\delta}$ and $\tilde \lambda$ together with \eqref{eqn : FirstOrder2Small}, we obtain
    \[\rho (\bar\delta(V_1) + 1) - \tilde \lambda^{\delta} - \sum_{i \in E} \tilde \lambda_i^h h_i - \sum_{i \in I} \tilde \lambda_i^g \Beta_i^g g_i 
      + \sum_{i \in V} (\Beta_i^H H_i, - \Beta_i^G G_i) \tilde \lambda_i^F =
      \alpha - \alpha \lambda^{\delta} - \alpha(1 - \lambda^{\delta}) = 0,\]
    showing also \eqref{eqn : FirstOrder2}. Hence $(\bar s(V_1),\bar\delta(V_1))$ is the solution of $QP(\rho,V_1)$ and the proof is complete.
  \end{proof}
  
  We now formulate the main theorem of this section.
  
  \begin{theorem} \label{The : FinitAndFinal}
    \begin{enumerate}
     \item Algorithm \ref{AlgSol} is finite.
     \item If the Algorithm \ref{AlgSol} is not terminated because of degeneracy, then $(s^N,\delta^N)$ is $\mathcal{Q}_M$-stationary
	   for the problem \eqref{eq : deltaprob} and $\delta^N < \zeta$.
    \end{enumerate}
    \end{theorem}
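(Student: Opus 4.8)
The plan is to establish part 1 by separately bounding the number of restarts and the number of improvement steps performed between two consecutive restarts, and then to obtain part 2 directly from the condition under which the improvement loop in step 2 is exited, combined with Lemma~\ref{Lem : QstatQPpiece}.

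For the improvement loop I would first note that the objective $q(s,\delta):=\frac12 s^TBs+\nabla f\,s+\rho(\frac12\delta^2+\delta)$ is strictly convex in $(s,\delta)$, since $B$ is positive definite and $\rho>0$; hence each piece $QP(\rho,V_1)$ has a unique solution. The decisive elementary observation is that the current iterate $(s^t,\delta^t)$ is feasible for every one of the four pieces in \eqref{eqn : b1b2Stat}--\eqref{eqn : I00EmptyStat}: in each of them $I^1(s^t,\delta^t)\subseteq V_1$ and $I^2(s^t,\delta^t)\subseteq V_2$, while the indices distributed differently all belong to $I^0(s^t,\delta^t)=I^{0-}(s^t,\delta^t)\cup I^{00}(s^t,\delta^t)$, whose constraint values lie in $P^1\cap P^2$ and are thus admissible on either side; the equality-, $g$- and $\delta$-constraints are identical in all pieces and are satisfied by $(s^t,\delta^t)$. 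Consequently, whenever step 2 advances to the unique minimizer of a piece that differs from $(s^t,\delta^t)$, strict convexity forces $q(s^{t+1},\delta^{t+1})<q(s^t,\delta^t)$. Since every iterate is the solution of some $QP(\rho,V_1)$ and there are only $2^{\vert V\vert}$ partitions in $\mathcal P(V)$, the iterates are pairwise distinct points drawn from a set of at most $2^{\vert V\vert}$ solution-points, so at most $2^{\vert V\vert}$ improvement steps can occur between two restarts.

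To bound the restarts I would invoke Lemma~\ref{Lem : RhoMinDelta} and put $C_\rho:=\max_{V_1\in\mathcal P(V)}C_\rho(V_1)$, which is finite as a maximum over finitely many partitions. Each restart multiplies $\rho$ by $\bar\rho>1$, so it suffices to show that no restart is triggered once $\rho\ge C_\rho$, and here I would examine the three restart tests in turn. By the construction of $\Beta$ and $V_1^1$ in step 1 (cf.\ Proposition~\ref{Prop : InitStep} and \eqref{eq : DefBet}) the point $(0,1)$ is feasible for $QP(\rho,V_1^1)$, whence $\delta^1=\bar\delta(V_1^1)\le 1=\delta^0$ for $\rho\ge C_\rho$ and the test after step 1 is silent. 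Inside step 2 the feasibility of $(s^t,\delta^t)$ for the newly chosen piece gives $\delta^{t+1}=\bar\delta(V_1^{t+1})\le\delta^t$, so $\delta$ never increases and the test in step 2 is silent as well. When the loop finally exits and we reach step 4, the exit condition forces $(s^t,\delta^t)$ to solve both problems in \eqref{eqn : I00EmptyStat}, so for $\rho\ge C_\rho$ both quantities in \eqref{eq : NonDegen} equal $\delta^t$; as we did not stop at step 3 we have $\delta^t\ge\zeta$, the non-degeneracy condition \eqref{eq : NonDegen} fails, and the algorithm stops because of degeneracy instead of restarting. Thus after finitely many restarts we reach $\rho\ge C_\rho$, after which no further restart occurs; together with the inner bound this proves finiteness.

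For part 2, if the algorithm does not stop because of degeneracy then it must stop at step 3, which gives $\delta^N<\zeta$ at once. Stopping at step 3 also means the while-loop was exited, i.e.\ $(s^N,\delta^N)$ solves all four pieces, in particular $QP(\rho,I^1(s^N,\delta^N))$ and $QP(\rho,I^1(s^N,\delta^N)\cup I^{00}(s^N,\delta^N))$ from \eqref{eqn : I00EmptyStat}. Applying Lemma~\ref{Lem : QstatQPpiece} with the partition $(\beta^1,\beta^2)=(\emptyset,I^{00}(s^N,\delta^N))$ shows that $(s^N,\delta^N)$ is $\mathcal Q$-stationary with respect to $(\emptyset,I^{00}(s^N,\delta^N))$; moreover the multiplier $\underline\lambda$ produced by $QP(\rho,I^1\cup I^{00})$ satisfies $\underline\lambda_i^F\in N_{P^1}=\R\times\{0\}$ and hence $\underline\lambda_i^G=0$ for all $i\in I^{00}(s^N,\delta^N)$, so it fulfils the M-stationarity condition \eqref{eq : MStatCond}. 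Therefore $(s^N,\delta^N)$ is $\mathcal Q_M$-stationary for \eqref{eq : deltaprob}.

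The step I expect to be the main obstacle is the restart bound, specifically verifying that for large $\rho$ all three restart tests are disabled simultaneously and, above all, that the degeneracy test at step 4 collapses to the already-excluded inequality $\delta^t<\zeta$; this rests on applying Lemma~\ref{Lem : RhoMinDelta} uniformly across all partitions and on the feasibility bookkeeping that couples $\bar\delta(V_1^{t+1})$ to $\delta^t$. The inner-loop finiteness, by contrast, is a routine strict-descent argument over finitely many states.
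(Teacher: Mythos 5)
Your proposal is correct and follows essentially the same route as the paper: inner-loop finiteness via uniqueness of the convex subproblem solutions and the bound $2^{\vert V\vert}$ on distinct pieces (the paper's Proposition \ref{Prop : Props}(3)), restart finiteness via $\rho \geq C_\rho := \max_{V_1} C_\rho(V_1)$ from Lemma \ref{Lem : RhoMinDelta} together with the feasibility chain $\delta^t = \bar\delta(V_1^t) \leq \delta^{t-1}$ and the collapse of the degeneracy test \eqref{eq : NonDegen} to $\delta^t < \zeta$, and part 2 via Lemma \ref{Lem : QstatQPpiece} applied to the partition $(\emptyset, I^{00}(s^N,\delta^N))$ with $\underline\lambda_i^G = 0$ giving \eqref{eq : MStatCond}. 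The only difference is that you spell out explicitly some steps the paper delegates to Proposition \ref{Prop : Props} or declares obvious.
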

    \begin{proof}
      1. The algorithm is obviously finite unless we perform a restart and hence increase $\rho$. Thus we can assume that $\rho$ is sufficiently
      large, say \[\rho \geq C_{\rho} := \max_{(V_1,V_2) \in \mathcal P(V)} C_{\rho}(V_1),\]
      with $C_{\rho}(V_1)$ given by the previous lemma. However this means, taking into account also Proposition \ref{Prop : Props} (1.),
      that $(s^{t-1},\delta^{t-1})$ is feasible for the problem $QP(\rho,V_1^t)$ for all $t$, hence $\delta^{t-1} \geq \bar\delta(V_1^t)$
      and $(s^t,\delta^t)$ is the solution of $QP(\rho,V_1^t)$, implying $\delta^{t} = \bar\delta(V_1^t)$
      and consequently $\delta^{t} \leq \delta^{t-1}$. Therefore we do not perform a restart in step 1 or step 2.
      On the other hand, since we enter steps 3 and 4 with $\delta^t = \bar\delta(I^{1}(s^t,\delta^t)) = \bar\delta(I^{1}(s^t,\delta^t) \cup I^{00}(s^t,\delta^t))$,
      we either terminate the algorithm in step 3 with $\delta^t < \zeta$ if the non-degeneracy condition \eqref{eq : NonDegen} is fulfilled
      or we terminate the algorithm because of degeneracy in step 4. This finishes the proof.
      
      2. The statement regarding stationarity follows easily from the fact that we enter step 3 of the algorithm only when $(s,\delta)$ is a solution of
      problems \eqref{eqn : I00EmptyStat} and this means that it is also $\mathcal{Q}$-stationary with respect to
      $(\emptyset,I^{00}(s^N,\delta^N))$ by Lemma \ref{Lem : QstatQPpiece}. Thus, $(s,\delta)$ is also
      $\mathcal{Q}_M$-stationary for problem \eqref{eq : deltaprob}. The claim about $\delta$ follows
      from the assumption that the Algorithm \ref{AlgSol} is not terminated because of degeneracy.
    \end{proof}
    
      We conclude this section with the following proposition that brings together the basic properties of the Algorithm \ref{AlgSol}.
    
    \begin{proposition} \label{Prop : Props}
      If the Algorithm \ref{AlgSol} is not terminated because of degeneracy, then the following properties hold:
      \begin{enumerate}
	\item For all $t = 1, \ldots, N$ the points $(s^{t-1},\delta^{t-1})$ and $(s^{t},\delta^{t})$ are feasible
	      for the problem $QP(\rho,V_1^t)$ and the point $(s^t,\delta^t)$ is also the solution of the convex problem $QP(\rho,V_1^t)$.
	\item For all $t = 1, \ldots, N$ it holds that
	      \begin{equation}
		0 \leq \delta^{t} \leq \delta^{t-1} \leq 1.
	      \end{equation}
	\item There exists a constant $C_t$, dependent only on the number of constraints, such that
		\begin{equation} \label{eq : StepsBound}
		    N \leq C_t.
		\end{equation}
      \end{enumerate}      
    \end{proposition}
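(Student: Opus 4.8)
The plan is to establish the three items in order, with item~1 serving as the backbone for items~2 and~3. Throughout I work on the final run of Algorithm \ref{AlgSol}, that is, on the sequence $(s^t,\delta^t)$, $t=0,1,\ldots,N$, produced after the last restart, along which $\rho$ stays fixed.

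For item~1, that $(s^t,\delta^t)$ solves $QP(\rho,V_1^t)$ is immediate from the construction: $(s^1,\delta^1)$ is computed as the solution of $QP(\rho,V_1^1)$ in the Initialization step and, for $t\ge 2$, $(s^t,\delta^t)$ is computed as the solution of $QP(\rho,V_1^t)$ in the Improvement step; in particular each $(s^t,\delta^t)$ is feasible for $QP(\rho,V_1^t)$. The real assertion is feasibility of the \emph{previous} iterate $(s^{t-1},\delta^{t-1})$ for $QP(\rho,V_1^t)$. The constraints on $h$, $g$ and the constraint $-\delta\le 0$ do not depend on the chosen partition, and $(s^{t-1},\delta^{t-1})$ fulfils them as the solution of $QP(\rho,V_1^{t-1})$, so only the vanishing constraints matter. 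Denoting by $w_i$ the value of the $i$-th disjunctive constraint at $(s^{t-1},\delta^{t-1})$ and recalling that $I^1:=I^{1}(s^{t-1},\delta^{t-1})$, $I^2:=I^{2}(s^{t-1},\delta^{t-1})$, $I^0:=I^{0}(s^{t-1},\delta^{t-1})$ partition $V$ according to whether $w_i\in P^1\setminus P^2$, $w_i\in P^2\setminus P^1$ or $w_i\in P^1\cap P^2$, one sees that $(s^{t-1},\delta^{t-1})$ is feasible for $QP(\rho,V_1)$ exactly when
\[ I^1\subseteq V_1\subseteq I^1\cup I^0. \]
I would then verify this two-sided inclusion for each of the four candidate sets generated in \eqref{eqn : b1b2Stat}--\eqref{eqn : I00EmptyStat}: every one of them contains $I^1$ and is contained in $I^1\cup I^{00}(s^{t-1},\delta^{t-1})\subseteq I^1\cup I^0$, using $I^0=I^{0-}(s^{t-1},\delta^{t-1})\cup I^{00}(s^{t-1},\delta^{t-1})$. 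For the base case $t=1$, Proposition \ref{Prop : InitStep} guarantees that $(s^0,\delta^0)=(0,1)$ is feasible for $QPVC(\rho)$ through the choice of $\Beta$, so each $w_i\in P=P^1\cup P^2$, and $V_1^1=I^{1}(s^0,\delta^0)$ yields the same inclusion.

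For item~2, the bound $\delta^t\ge 0$ follows from the constraint $-\delta\le 0$ satisfied by the feasible point $(s^t,\delta^t)$, and $\delta^t\le 1$ follows from $\delta^0=1$ together with the monotonicity $\delta^t\le\delta^{t-1}$. This monotonicity is precisely what the restart tests of steps~1 and~2 enforce: since no restart is triggered on the final run, we must have $\delta^1\le\delta^0$ and $\delta^t\le\delta^{t-1}$ for all $t$. (When $\rho\ge C_\rho$ this can equivalently be read off from Lemma \ref{Lem : RhoMinDelta}, which gives $\delta^t=\bar\delta(V_1^t)$, combined with the feasibility of $(s^{t-1},\delta^{t-1})$ for $QP(\rho,V_1^t)$ from item~1, whence $\bar\delta(V_1^t)\le\delta^{t-1}$.)

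For item~3, I would use strict convexity. Since $B$ is positive definite and $\rho>0$, the objective of every $QP(\rho,V_1)$ is strictly convex, so each piece has a unique minimizer and any other feasible point attains a strictly larger objective value. By item~1, $(s^t,\delta^t)$ is feasible for $QP(\rho,V_1^{t+1})$, whereas $(s^{t+1},\delta^{t+1})$ is its unique solution and differs from $(s^t,\delta^t)$ by the selection rule of the Improvement step; hence the objective value strictly decreases at every step. Consequently the optimal values of the solved pieces $QP(\rho,V_1^t)$ are strictly decreasing, which forces the index sets $V_1^1,\ldots,V_1^N$ to be pairwise distinct, since at fixed $\rho$ a repeated $V_1$ would reproduce the same optimal value. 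As each $V_1^t$ is a subset of $V$, at most $2^{\vert V\vert}$ of them exist, giving $N\le 2^{\vert V\vert}=:C_t$, a bound depending only on the number $\vert V\vert$ of vanishing constraints. The main obstacle is the feasibility claim of item~1 --- translating it into the inclusion $I^1\subseteq V_1^t\subseteq I^1\cup I^0$ and checking all four candidate partitions, plus the base case --- since items~2 and~3 follow fairly directly once it is in hand; the other delicate point is the strict-decrease argument, which is what upgrades mere finiteness to a bound governed solely by the number of constraints.
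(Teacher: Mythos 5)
Your proposal is correct and follows essentially the same route as the paper: the feasibility criterion $I^1(s,\delta)\subset V_1\subset I^1(s,\delta)\cup I^0(s,\delta)$ checked against the four candidate index sets, monotonicity of $\delta^t$ enforced by the restart tests, and the bound $N\le 2^{\vert V\vert}$ from pairwise distinctness of the $V_1^t$ after the last restart. The only difference is that you spell out, via strict convexity and strict decrease of the optimal values, why the index sets cannot repeat --- a step the paper dismisses as obvious --- which is a welcome addition rather than a deviation.
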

    \begin{proof}   
      1. By definitions of the problems $QPVC(\rho)$ and $QP(\rho,V_1)$ it follows that a point $(s,\delta)$, feasible
      for $QPVC(\rho)$, is feasible for $QP(\rho,V_1)$ if and only if
      \begin{equation} \label{eq : Feasibility}
	I^1(s,\delta) \subset V_1 \subset I^1(s,\delta) \cup I^{0}(s,\delta).
      \end{equation}
      The point $(s^0,\delta^0)$ is clearly feasible for $QP(\rho,V_1^1)$
      and similarly the point $(s^{t},\delta^{t})$ is feasible for $QP(\rho,V_1^{t+1})$ for all $t = 1, \ldots, N-1$,
      since the partition $V_1^{t+1}$ is defined by one of the index sets of \eqref{eqn : b1b2Stat}-\eqref{eqn : I00EmptyStat}
      and thus fulfills \eqref{eq : Feasibility}. However, feasibility of $(s^{t+1},\delta^{t+1})$ for $QP(\rho,V_1^{t+1})$,
      together with $(s^{t+1},\delta^{t+1})$ being the solution of $QP(\rho,V_1^{t+1})$, then follows from its definition.
      
      2. Statement follows from $\delta^0 = 1$, from the fact that we perform a restart whenever $\delta^t > \delta^{t-1}$ occurs
      and from the constraint $-\delta \leq 0$.
  
      3. Since whenever the parameter $\rho$ is increased the algorithm goes to the step 1 and thus the counter $t$ of the pieces
      is reset to $0$, it follows that after the last time the algorithm enters step 1 we keep $\rho$ constant.
      It is obvious that all the index sets $V_1^t$ are pairwise different implying
      that the maximum of switches to a new piece is $2^{\vert V \vert}$.
    \end{proof}

\section{The basic SQP algorithm for MPVC}

  An outline of the basic algorithm is as follows.
  
  \begin{algorithm}[Solving the MPVC] \label{AlgMPCC} \rm \mbox{}
  
    \Itl1{1:} Initialization:
    \Itl2{}   Select a starting point $x_0 \in \R^n$ together with a positive definite $n \times n$ matrix $B_0$,
    \Itl3{}   a parameter $\rho_0 > 0$ and constants $\zeta \in (0,1)$ and $\bar\rho > 1$.
    \Itl2{}   Select positive penalty parameters $\sigma_{-1} = (\sigma^h_{-1}, \sigma^g_{-1}, \sigma^F_{-1})$.
    \Itl2{}   Set the iteration counter $k := 0$.
    \Itl1{2:} Solve the Auxiliary problem:
    \Itl2{}   Run Algorithm \ref{AlgSol} with data $\zeta, \bar\rho, \rho:= \rho_k, B:=B_k, \nabla f := \nabla f (x_k),$
    \Itl3{}   $h_i := h_i(x_k), \nabla h_i := \nabla h_i (x_k), i \in E,$ etc.
    \Itl2{}   If the Algorithm \ref{AlgSol} stops because of degeneracy,
    \Itl3{}   stop the Algorithm \ref{AlgMPCC} with an error message.
    \Itl2{}   If the final iterate $s^N$ is zero, stop the Algorithm \ref{AlgMPCC} and return $x_k$ as a solution.
    \Itl1{3:} Next iterate:
    \Itl2{}   Compute new penalty parameters $\sigma_{k}$.
    \Itl2{}   Set $x_{k+1} := x_k + s_k$ where $s_k$ is a point on the polygonal line connecting the points
    \Itl3{}   $s^0,s^1, \ldots, s^N$ such that an appropriate merit function depending on $\sigma_{k}$ is decreased.   
    \Itl2{}   Set $\rho_{k+1} := \rho$, the final value of $\rho$ in Algorithm \ref{AlgSol}.
    \Itl2{}   Update $B_{k}$ to get positive definite matrix $B_{k+1}$.
    \Itl2{}   Set $k := k+1$ and go to step 2.
  \end{algorithm}
  
  \begin{remark} \label{rem : StoppingCriteria}
    We terminate the Algorithm \ref{AlgMPCC} only in the following two cases. In the first case
    no sufficient reduction of the violation of the constraints can be achieved.
    The second case will be satisfied only by chance when the current iterate is a $\mathcal{Q}_M$-stationary solution.
    Normally, this algorithm produces an infinite sequence of iterates and we must include a stopping criterion
    for convergence. Such a criterion could be that the violation of the constraints at some iterate is sufficiently small,
    \[\max \{ \max_{i \in E} \vert h_i(x_k) \vert, \max_{i \in I} (g_i(x_k))^+, \max_{i \in V} d(F_i(x_k),P) \} \leq \epsilon_C,\]
    where $F_i$ is given by \eqref{eqn : FPdef} and the expected decrease in our merit function is sufficiently small,
    \[ (s_k^{N_k})^T B_k s_k^{N_k} \leq \epsilon_1, \]
    see Proposition \ref{pro : MainMerit} below.  
  \end{remark}

  \subsection{The next iterate}
  
    Denote the outcome of Algorithm \ref{AlgSol} at the $k-$th iterate by
    $$(s_k^{t}, \delta_k^{t}), \lambda_k^{t}, (V_{1,k}^{t}, V_{2,k}^{t}) \textrm{ for } t = 0, \ldots, N_k \textrm{ and }
    \Beta_k, \underline\lambda_k^{N_k}, \overline\lambda_k^{N_k}.$$
    The new penalty parameters are computed by
    \begin{eqnarray} \label{eqn : PenalParam}
      \sigma_{i,k}^h = \begin{cases}
			    \xi_2 \tilde\lambda_{i,k}^{h} & \textrm{ if } \sigma_{i,k-1}^h < \xi_1 \tilde\lambda_{i,k}^{h}, \\
			    \sigma_{i,k-1}^h & \textrm{ else},
                          \end{cases} \qquad
       \sigma_{i,k}^g = \begin{cases}
			    \xi_2 \tilde\lambda_{i,k}^{g} & \textrm{ if } \sigma_{i,k-1}^g < \xi_1 \tilde\lambda_{i,k}^{g}, \\
			    \sigma_{i,k-1}^g & \textrm{ else},
                          \end{cases}
                          \\ \nonumber
       \sigma_{i,k}^F = \begin{cases}
			    \xi_2 \tilde{\lambda}_{i,k}^F & \textrm{ if }
			    \sigma_{i,k-1}^F < \xi_1 \tilde{\lambda}_{i,k-1}^F, \\
			    \sigma_{i,k-1}^F & \textrm{ else},
                          \end{cases}
    \end{eqnarray}
    where
    \begin{equation} \label{eq : DefTilLamb}
      \tilde{\lambda}_{i,k}^h = \max \vert \lambda_{i,k}^{h,t} \vert, \quad
      \tilde{\lambda}_{i,k}^g = \max \vert \lambda_{i,k}^{g,t} \vert, \quad
      \tilde{\lambda}_{i,k}^F = \max \norm{\lambda_{i,k}^{F,t}}_{\infty},
    \end{equation}
    with maximum being taken over $t \in \{ 1, \ldots, N_k \}$
    and $1 < \xi_1 < \xi_2$. Note that this choice of $\sigma_{k}$ ensures
    \begin{equation}
      \label{EqPropSigma}
      \sigma_{k}^h \geq \tilde\lambda_k^h,\ \sigma_{k}^g \geq \tilde\lambda_k^g,\ \sigma_{k}^F \geq \tilde{\lambda}_{k}^F.
    \end{equation}
    
    \subsubsection{The merit function}
            
      We are looking for the next iterate at the polygonal line connecting the points $s_k^0,s_k^1, \ldots, s_k^{N_k}$.
      For each line segment $[s_k^{t-1},s_k^t] := \{(1-\alpha) s_k^{t-1} + \alpha s_k^t \mv \alpha \in [0,1] \}, t = 1, \ldots, N_k$
      we consider the functions
      \begin{eqnarray*}
	\phi_k^t(\alpha) & := & f(x_k + s) + \sum \limits_{i \in E} \sigma_{i,k}^h \vert h_i(x_k + s) \vert +
	  \sum \limits_{i \in I} \sigma_{i,k}^g ( g_i(x_k + s) )^+ \\
	  && + \sum \limits_{i \in V_{1,k}^t} \sigma_{i,k}^F d(F_i(x_k + s),P^1)
	  + \sum \limits_{i \in V_{2,k}^t} \sigma_{i,k}^F d(F_i(x_k + s),P^2), \\
	\hat{\phi}_k^t(\alpha) & := & f + \nabla f s + \frac{1}{2} s^TB_ks +
	  \sum \limits_{i \in E} \sigma_{i,k}^h \vert h_i + \nabla h_i s \vert +
	  \sum \limits_{i \in I} \sigma_{i,k}^g ( g_i + \nabla g_i s )^+ \\
	  && + \sum \limits_{i \in V_{1,k}^t} \sigma_{i,k}^F d(F_i + \nabla F_i s,P^1)
	  + \sum \limits_{i \in V_{2,k}^t} \sigma_{i,k}^F d(F_i + \nabla F_i s,P^2),
      \end{eqnarray*}
      where $s = (1 - \alpha) s_k^{t-1} + \alpha s_k^t$ and $f = f(x_k)$, $\nabla f = \nabla f (x_k)$,
      $h_i = h_i(x_k), \nabla h_i = \nabla h_i (x_k), i \in E,$ etc.
      and we further denote
      \begin{equation} \label{eq : DefOfR}
	r^{t}_{k,0}:= \hat\phi_k^{t}(0) - \hat\phi_k^{1}(0), \quad r^t_{k,1} := \hat\phi_k^{t}(1) - \hat\phi_k^{1}(0).
      \end{equation}
      
      \begin{lemma} \label{lem : convex}
      \begin{enumerate}
       \item For every $t \in \{ 1, \ldots, N_k \}$ the function $\hat{\phi}_k^t$ is convex.
       \item For every $t \in \{ 1, \ldots, N_k \}$ the function $\hat{\phi}_k^t$ is a first order approximation of $\phi_k^t$, that is
      \[\vert \phi_k^t(\alpha) - \hat{\phi}_k^t(\alpha) \vert = o(\norm{s}),\]
      where $s = (1 - \alpha) s_k^{t-1} + \alpha s_k^t$.
      \end{enumerate}
      \end{lemma}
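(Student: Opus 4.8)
The plan is to handle the two assertions separately, since they rely on different mechanisms. For the convexity claim, I would observe that $\hat\phi_k^t$ is built as a sum of terms, each of which is convex in $s$, composed with the affine map $\alpha \mapsto s = (1-\alpha)s_k^{t-1} + \alpha s_k^t$. Since an affine precomposition preserves convexity, it suffices to check convexity in $s$. The quadratic term $\tfrac12 s^T B_k s + \nabla f\,s + f$ is convex because $B_k$ is positive definite (hence positive semidefinite), so it is in fact strictly convex. The terms $\sigma_{i,k}^h |h_i + \nabla h_i s|$ and $\sigma_{i,k}^g (g_i + \nabla g_i s)^+$ are nonnegative multiples of a convex function (absolute value, respectively positive part) composed with an affine function of $s$, hence convex. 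For the distance terms, I would invoke part (3) of the Proposition on the distance function $d$: since $P^1 = \{0\}\times\R$ and $P^2 = \R^2_-$ are both convex sets, $d(\cdot,P^1)$ and $d(\cdot,P^2)$ are convex; composing with the affine map $s \mapsto F_i + \nabla F_i s$ and scaling by $\sigma_{i,k}^F \geq 0$ again preserves convexity. Summing finitely many convex functions yields a convex function, which completes the first part.

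For the second assertion, the strategy is a termwise first-order Taylor comparison. I would write $\phi_k^t(\alpha) - \hat\phi_k^t(\alpha)$ as a sum over the corresponding terms and bound each difference. For the smooth objective, $f(x_k + s) - \big(f + \nabla f\,s + \tfrac12 s^T B_k s\big)$; since $f$ is continuously differentiable, $f(x_k+s) - f - \nabla f\,s = o(\norm{s})$, and the extra quadratic piece $\tfrac12 s^T B_k s = O(\norm{s}^2) = o(\norm{s})$, so this term is $o(\norm{s})$. For each constraint term I would exploit the Lipschitz continuity of $|\cdot|$, $(\cdot)^+$, and $d(\cdot,A)$ (the latter with modulus $1$ by part (2) of the Proposition) to pull the difference inside: for instance,
\[
\big| d(F_i(x_k+s),P^j) - d(F_i + \nabla F_i s, P^j) \big| \leq \norm{F_i(x_k+s) - (F_i + \nabla F_i s)}_1,
\]
and the right-hand side is $o(\norm{s})$ because $F_i = (-H_i, G_i)^T$ is continuously differentiable, so $F_i(x_k+s) - F_i(x_k) - \nabla F_i(x_k) s = o(\norm{s})$. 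The same Lipschitz-with-modulus-one argument applied to $|h_i(x_k+s)| - |h_i + \nabla h_i s|$ and $(g_i(x_k+s))^+ - (g_i + \nabla g_i s)^+$ gives the bounds $\leq |h_i(x_k+s) - h_i - \nabla h_i s| = o(\norm{s})$ and $\leq |g_i(x_k+s) - g_i - \nabla g_i s| = o(\norm{s})$ respectively. Summing the finitely many $o(\norm{s})$ contributions, each scaled by a fixed penalty weight, yields $|\phi_k^t(\alpha) - \hat\phi_k^t(\alpha)| = o(\norm{s})$ as claimed.

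The main obstacle, though modest, lies in being precise about the meaning of the $o(\norm{s})$ statement. Here $s = (1-\alpha)s_k^{t-1} + \alpha s_k^t$ ranges over a line segment as $\alpha$ varies over $[0,1]$, so $\norm{s}$ is bounded and the claim should be read as a uniform first-order estimate as the iterates $s_k^{t-1}, s_k^t \to 0$. I would make this rigorous by noting that all the little-$o$ terms above come from the differentiability of the fixed functions $f$, $h_i$, $g_i$, $H_i$, $G_i$ at $x_k$, so each remainder divided by $\norm{s}$ tends to zero as $\norm{s} \to 0$ uniformly in $\alpha$; the finitely many summands and the fixed penalty weights $\sigma_{i,k}$ then preserve this property. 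No deep tools are required beyond the Lipschitz and convexity properties of $d$ already recorded in the Proposition together with standard differentiability of the problem data.
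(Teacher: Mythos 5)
Your proposal is correct and follows essentially the same route as the paper: convexity of $P^1,P^2$ (hence of the distance functions) plus affine precomposition for part (1), and a termwise Lipschitz-modulus-one bound reducing everything to the first-order Taylor remainders of $f,h_i,g_i,F_i$ for part (2). Your explicit remark that $\tfrac12 s^T B_k s = O(\norm{s}^2) = o(\norm{s})$ is a detail the paper leaves implicit but is exactly what makes the objective term work.
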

      \begin{proof}
       1. By convexity of $P^1$ and $P^2$, $\hat{\phi}_k^t$ is convex because it is sum of convex functions.
       
       2. By Lipschitz continuity of distance function with Lipschitz modulus $L = 1$ we conclude
       \begin{eqnarray*}
        \vert \phi_k^t(\alpha) - \hat{\phi}_k^t(\alpha) \vert & \leq &
	  \vert f(x_k + s) - f - \nabla f s - \frac{1}{2} s^TB_ks \vert +
	  \sum \limits_{i \in E} \sigma_{i,k}^h \vert h_i(x_k + s) - h_i - \nabla h_i s \vert \\
	  && \sum \limits_{i \in I} \sigma_{i,k}^g \vert g_i(x_k + s) - g_i - \nabla g_i s \vert 
	  + \sum \limits_{i \in V} \sigma_{i,k}^F \norm{ F_i(x_k + s) - F_i - \nabla F_i s}_1
       \end{eqnarray*}
       and hence the assertion follows.
      \end{proof}

      We state now the main result of this subsection. For the sake of simplicity we omit the iteration index $k$ in this part.
      
      \begin{proposition} \label{pro : MainMerit}
	For every $t \in \{ 1, \ldots, N_k \}$
	  \begin{eqnarray} \label{eqn : BoundMain}
	    \hat\phi^{t}(0) - \hat\phi^{1}(0) & \leq &
	    - \sum \limits_{\tau = 1}^{t-1} \frac{1}{2} (s^{\tau} - s^{\tau-1})^T B (s^{\tau} - s^{\tau-1}) \, \, \leq \, \, 0,
	    \\ \label{eqn : BoundMain2}
	    \hat\phi^{t}(1) - \hat\phi^{1}(0) & \leq &
	    - \sum \limits_{\tau = 1}^{t} \frac{1}{2} (s^{\tau} - s^{\tau-1})^T B (s^{\tau} - s^{\tau-1}) \, \, \leq \, \, 0.
	  \end{eqnarray}
      \end{proposition}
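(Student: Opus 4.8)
The plan is to prove the two bounds \eqref{eqn : BoundMain} and \eqref{eqn : BoundMain2} together by induction on $t$, isolating a \emph{within-segment} estimate and an \emph{across-segment} estimate. Writing $\Delta^\tau := \frac{1}{2}(s^\tau-s^{\tau-1})^T B (s^\tau-s^{\tau-1})\ge 0$ (and omitting the outer index $k$ as in the statement), I would establish (A) $\hat\phi^t(1)-\hat\phi^t(0)\le -\Delta^t$ and (B) $\hat\phi^{t+1}(0)\le \hat\phi^t(1)$. Since $\hat\phi^1(0)-\hat\phi^1(0)=0$ gives the base case, (A) upgrades \eqref{eqn : BoundMain} at level $t$ to \eqref{eqn : BoundMain2} at level $t$, and (B) passes from \eqref{eqn : BoundMain2} at level $t$ to \eqref{eqn : BoundMain} at level $t+1$; chaining the two telescopes exactly to the asserted sums. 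The starting observation is that the merit model is the $\delta=0$ slice of the exact penalty function of the corresponding QP piece: if $\Psi^t(s,\delta)$ denotes the objective of $QP(\rho,V_1^t)$ with all its constraints moved into the objective via the $\ell_1$/distance penalties weighted by $\sigma$, then $\hat\phi^t(\alpha)=f+\Psi^t((1-\alpha)s^{t-1}+\alpha s^t,\,0)$, and $\Psi^t(\cdot,\cdot)$ is convex by Lemma \ref{lem : convex}.

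For the within-segment estimate (A): because the distances are measured in $\norm{\cdot}_1$ and the weights dominate the $\norm{\cdot}_\infty$-multipliers by \eqref{EqPropSigma}, the convex solution $(s^t,\delta^t)$ of $QP(\rho,V_1^t)$ (Proposition \ref{Prop : Props}(1.)) satisfying the KKT system \eqref{eqn : FirstOrder1}--\eqref{eqn : ComplemCon3} is the global minimizer of $\Psi^t$, which is strongly convex with modulus $\mathrm{diag}(B,\rho)$. Hence $0$ is a subgradient at $(s^t,\delta^t)$, so with $q^t:=\frac{1}{2}(s^t)^T B s^t+\nabla f s^t+\rho(\frac{1}{2}(\delta^t)^2+\delta^t)$ (the penalties vanish at the feasible $(s^t,\delta^t)$) strong convexity yields
\[
\Psi^t(s^{t-1},0)\ \ge\ q^t+\Delta^t+\frac{\rho}{2}(\delta^t)^2 .
\]
On the other hand, evaluating $\Psi^t(s^t,0)$ directly and using that the residuals at the feasible point $(s^t,\delta^t)$ scale with $\delta^t$ — through \eqref{eq : DistIfIn} and \eqref{eq : betaeff} for the $F$-terms — gives an upper bound $\Psi^t(s^t,0)\le q^t-\rho(\frac{1}{2}(\delta^t)^2+\delta^t)+\delta^t K$, where $K:=\sum_{i\in E}\sigma_i^h|h_i|+\sum_{i\in I}\sigma_i^g(g_i)^+ +\sum_{i\in V}\sigma_i^F d(F_i,P)$. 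Subtracting leaves $\hat\phi^t(1)-\hat\phi^t(0)\le -\Delta^t-\rho(\delta^t)^2+\delta^t(K-\rho)$, which is $\le -\Delta^t$ once $\rho$ is large enough (e.g.\ $\rho\ge K$), i.e.\ for the final value of $\rho$ produced by the restarts in Algorithm \ref{AlgSol}.

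For the across-segment estimate (B): the functions $\hat\phi^{t+1}(0)$ and $\hat\phi^t(1)$ are both evaluated at $s=s^t$ and differ only through the vanishing-constraint penalties, since they use the partitions $(V_1^{t+1},V_2^{t+1})$ and $(V_1^t,V_2^t)$ respectively, while every other term coincides. By Proposition \ref{Prop : Props}(1.) the point $(s^t,\delta^t)$ is feasible both for $QP(\rho,V_1^t)$ (as its solution) and for $QP(\rho,V_1^{t+1})$, so by \eqref{eq : Feasibility} any index whose piece-assignment changes satisfies $\delta^t(\Beta_i^H H_i,-\Beta_i^G G_i)^T+F_i+\nabla F_i s^t\in P^1\cap P^2$; the shifted point then lies on the common boundary $\{0\}\times\R_-$. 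I would compare the two unshifted distances $d(F_i+\nabla F_i s^t,P^1)$ and $d(F_i+\nabla F_i s^t,P^2)$ via \eqref{eq : DistToP} and \eqref{eq : DistOfCup}, distinguishing the three admissible values of $(\Beta_i^H,\Beta_i^G)$ from \eqref{eq : DefBet} and using \eqref{eq : betaeff}, \eqref{eq : InitPart}, with the goal of showing that the piece selected by $V^{t+1}$ realizes $d(\cdot,P)=\min\{d(\cdot,P^1),d(\cdot,P^2)\}$, so that switching cannot raise the penalty.

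I expect step (B) to be the main obstacle. Estimate (A) is a clean consequence of exact penalization and strong convexity of a single QP piece, whereas (B) must control the interaction of two different disjunctive pieces: the available feasibility concerns the $\delta^t$-\emph{shifted} constraints, while the merit model measures the \emph{unshifted} distances, and these agree only up to an $O(\delta^t)$ term. The delicate point is to verify that for \emph{every} index that changes piece — in particular indices in $I^{0-}(s^t,\delta^t)$, whose shifted point sits strictly inside $P^1\cap P^2$ — the partition produced by the selection rule of step 2 of Algorithm \ref{AlgSol} always assigns the nearer of $P^1,P^2$; this is exactly where the explicit construction of $\Beta$ in \eqref{eq : DefBet} and of the candidate index sets \eqref{eqn : b1b2Stat}--\eqref{eqn : I00EmptyStat} has to be exploited in full.
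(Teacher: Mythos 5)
Your decomposition into a within-segment estimate (A) and an across-segment estimate (B) does not match the structure of the statement, and both halves have genuine gaps; the paper instead proves the two bounds by a single global telescoping argument anchored at $s^0=0$.

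Concerning (A): your derivation of $\hat\phi^t(1)-\hat\phi^t(0)\le-\Delta^t-\rho(\delta^t)^2+\delta^t(K-\rho)$ is essentially correct (modulo the fact that the constraint $-\delta\le0$ must be kept as a hard constraint or penalized for the exact-penalty/global-minimizer claim to hold), but the conclusion requires $\rho\ge K$, and nothing in Algorithm \ref{AlgSol} guarantees this: $\rho$ is increased only upon a restart triggered by $\delta^t>\delta^{t-1}$ or by failure of \eqref{eq : NonDegen}, and the algorithm may well terminate with $\rho$ far below $K$. The proposition must hold for whatever final $\rho$ the algorithm produces, so (A) is unproven --- and indeed the segment-wise inequality $\hat\phi^t(1)-\hat\phi^t(0)\le-\Delta^t$ is stronger than anything \eqref{eqn : BoundMain}--\eqref{eqn : BoundMain2} assert and does not appear to be true in general. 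Concerning (B): the inequality $\hat\phi^{t+1}(0)\le\hat\phi^t(1)$ is false in general. Every index whose piece-assignment changes between $V^t$ and $V^{t+1}$ lies in $I^{0}(s^t,\delta^t)$, so its \emph{shifted} point lies in $P^1\cap P^2$; but then the two \emph{unshifted} distances are each bounded by $\delta^t d(F_i,P)$ while one of them can be zero and the other equal to $\delta^t d(F_i,P)$ (e.g.\ $(\Beta_i^H,\Beta_i^G)=(1,0)$ with $H_i>0$ gives $d(F_i+\nabla F_i s^t,P^2)=0$ and $d(F_i+\nabla F_i s^t,P^1)=\delta^t H_i$). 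Since step 2 may set $V_1^{t+1}=I^1(s^t,\delta^t)\cup I^{00}(s^t,\delta^t)$, such an index can be reassigned to the \emph{farther} piece, and the penalty increases by $\sigma_i^F\delta^t d(F_i,P)$. No choice rule based on the shifted index sets can prevent this, so the "switching cannot raise the penalty" claim you hoped to prove is unavailable.

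The paper's proof avoids both problems by never comparing consecutive segments. It writes $\hat\phi^{t+j}(1-j)-\hat\phi^1(0)$ in closed form, subtracts the KKT identities \eqref{eq:mult} summed over $\tau=1,\dots,t$ to produce $-\sum_{\tau=1}^{t}\Delta^\tau$ via the algebraic identity for $B$, bounds the accumulated multiplier cross-terms telescopically in $\delta$ by $\tilde\lambda_i^F(1-\delta^t)d(F_i,P)$ (inequality \eqref{eq : MultiplTerm}), bounds the terminal penalties by $\sigma_i^F\delta^t d(F_i,P)$ for \emph{both} partitions $V^{t}$ and $V^{t+1}$ simultaneously (inequality \eqref{eq : FinalTerm}), and --- crucially --- evaluates the initial penalty at $s^0=0$ \emph{exactly} as $\sum_i\sigma_i^F d(F_i,P)$ using \eqref{eq : InitPart}. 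The terms then combine into $-\sum_i(\sigma_i^F-\tilde\lambda_i^F)(1-\delta^t)d(F_i,P)\le0$ by \eqref{EqPropSigma}: the slack between $\sigma$ and $\tilde\lambda$ absorbs the error \emph{globally} over all $t$ segments at once, with no condition on $\rho$ and no monotonicity of the penalty across a partition switch. If you want to repair your approach, you would have to replace (B) by a comparison of $\hat\phi^{t+1}(0)$ and $\hat\phi^{t}(1)$ with the common reference $\hat\phi^1(0)$, which is exactly what the paper does.
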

    
    \begin{proof}      
      Fix $t \in \{ 1, \ldots, N_k \}$ and note that
      \begin{eqnarray*}
	  1/2 (s^{t})^T B s^{t} + \nabla f s^{t} & = & 1/2 (s^{t})^T B s^{t} + \nabla f s^{t} - 1/2 (s^{0})^T B s^{0} - \nabla f s^{0} \\
	  & = & \sum_{\tau = 1}^t 1/2 (s^{\tau})^T B s^{\tau} - 1/2(s^{\tau-1})^T B s^{\tau-1} + \nabla f (s^{\tau} - s^{\tau-1}),
      \end{eqnarray*}
      because of $s^0 = 0$. For $j=0,1$ consider $r^{t+j}_{1-j}$ defined by \eqref{eq : DefOfR}. We obtain
	  \begin{eqnarray} \label{eqn : Basic}
	    r^{t+j}_{1-j} & = & \sum_{\tau = 1}^t \left( \frac{1}{2} (s^{\tau})^T B s^{\tau} - \frac{1}{2}(s^{\tau-1})^T B s^{\tau-1}
	    + \nabla f (s^{\tau} - s^{\tau-1}) \right) \\ \nonumber
	    && + \sum \limits_{i \in E} \sigma_i^h \left( \vert h_i + \nabla h_i s^{t} \vert - \vert h_i \vert \right)
	    + \sum \limits_{i \in I} \sigma_i^g \left( (g_i + \nabla g_i s^{t})^+ - (g_i)^+ \right) \\ \nonumber
	    && + \sum \limits_{i \in V_1^{t+j}} \sigma_i^F d(F_i + \nabla F_i s^t,P^1)
	    + \sum \limits_{i \in V_2^{t+j}} \sigma_i^F d(F_i + \nabla F_i s^t,P^2) \\ \nonumber
	    && - \sum \limits_{i \in V_1^{1}} \sigma_i^F d(F_i,P^1)
	    - \sum \limits_{i \in V_2^{1}} \sigma_i^F d(F_i,P^2).
	  \end{eqnarray}
      
	  Using that $(s^{\tau},\delta^{\tau})$ is the solution of $QP(\rho,V_1^{\tau})$ and multiplying the first order optimality
	  condition \eqref{eqn : FirstOrder1} by $(s^{\tau} - s^{\tau-1})^T$ yields
	  \begin{equation} \label{eq:mult}
	    (s^{\tau} - s^{\tau-1})^T \left( B s^{\tau} + \nabla f^T + \sum \limits_{i \in E} \lambda_i^{h,\tau} \nabla h_i^T +
	    \sum \limits_{i \in I} \lambda_i^{g,\tau} \nabla g_i^T + \sum \limits_{i \in V} \nabla F_i^T \lambda_i^{F,\tau} \right) = 0.
	  \end{equation}
	  Summing up the expression on the left hand side from $\tau =1$ to $t$, subtracting it from the right hand side of \eqref{eqn : Basic}
	  and taking into account the identity
	  $$1/2 (s^{\tau})^T B s^{\tau} - 1/2(s^{\tau-1})^T B s^{\tau-1} - (s^{\tau} - s^{\tau-1})^T B s^{\tau} =
	  - 1/2 (s^{\tau} - s^{\tau-1})^T B (s^{\tau} - s^{\tau-1})$$
	  we obtain for $j=0,1$
	  \begin{eqnarray} \label{eqn : TheLong}
	    r^{t+j}_{1-j} & = & - \sum \limits_{\tau = 1}^{t} \frac{1}{2} (s^{\tau} - s^{\tau-1})^T B (s^{\tau} - s^{\tau-1}) \\ \nonumber
	    & & + \sum \limits_{i \in E} \left( \sigma_i^h (\vert h_i + \nabla h_i s^{t} \vert - \vert h_i \vert)
	    - \sum \limits_{\tau = 1}^{t} \lambda_i^{h,\tau} \nabla h_i (s^{\tau} - s^{\tau-1}) \right) \\ \nonumber
	    & & + \sum \limits_{i \in I} \left( \sigma_i^g ( (g_i + \nabla g_i s^{t})^+ - (g_i)^+ )
	    - \sum \limits_{\tau = 1}^{t} \lambda_i^{g,\tau} \nabla g_i (s^{\tau} - s^{\tau-1}) \right) \\ \nonumber
	    && + \sum \limits_{i \in V_1^{t+j}} \sigma_i^F d(F_i + \nabla F_i s^t,P^1)
	    + \sum \limits_{i \in V_2^{t+j}} \sigma_i^F d(F_i + \nabla F_i s^t,P^2) \\ \nonumber
	    && - \sum \limits_{i \in V_1^{1}} \sigma_i^F d(F_i,P^1)
	    - \sum \limits_{i \in V_2^{1}} \sigma_i^F d(F_i,P^2)
	    - \sum \limits_{i \in V} \sum \limits_{\tau = 1}^{t} (\lambda_i^{F,\tau})^T \nabla F_i (s^{\tau} - s^{\tau-1}).
	  \end{eqnarray}
	  
	  First, we claim that
	  \begin{equation} \label{eq : MultiplTerm}
	    - \sum \limits_{i \in V} \sum \limits_{\tau = 1}^{t} (\lambda_i^{F,\tau})^T \nabla F_i (s^{\tau} - s^{\tau-1})
	    \leq \sum \limits_{i \in V} \tilde{\lambda}_{i}^F (1 - \delta^t) d(F_i,P).
	  \end{equation}
	  Consider $i \in V$ and $\tau \in \{1, \ldots, t\}$ with $i \in V_1^{\tau}$. By the feasibility of $(s^{\tau},\delta^{\tau})$
	  and $(s^{\tau-1},\delta^{\tau-1})$ for $QP(\rho, V_1^{\tau})$ it follows that
	  \[\delta^{\tau} (\Beta_i^H H_i, - \Beta_i^G G_i)^T + F_i + \nabla F_i s^{\tau} \in P^1, \quad
	    \delta^{\tau-1} (\Beta_i^H H_i, - \Beta_i^G G_i)^T + F_i + \nabla F_i s^{\tau-1} \in P^1\]
	  and hence from \eqref{eqn : ComplemCon2} and \eqref{eq : convexNC} we conclude
	  \[- (\lambda_i^{F,\tau})^T \left( \nabla F_i (s^{\tau} - s^{\tau-1}) + (\delta^{\tau} - \delta^{\tau-1})
	   (\Beta_i^H H_i, - \Beta_i^G G_i)^T \right) \leq 0\]
	  and consequently
	  \begin{equation} \label{eq : MultiplBound}
	  - (\lambda_i^{F,\tau})^T \nabla F_i (s^{\tau} - s^{\tau-1}) \leq (\lambda_i^{F,\tau})^T
	  (\delta^{\tau} - \delta^{\tau-1}) (\Beta_i^H H_i, - \Beta_i^G G_i)^T \leq
	  \tilde{\lambda}_{i}^F (\delta^{\tau-1} - \delta^{\tau}) d(F_i,P)
	  \end{equation}
	  follows by the H\"older inequality and \eqref{eq : betaeff}.  
	  
	  Analogous argumentation yields \eqref{eq : MultiplBound} also for $i, \tau$ with $i \in V_2^{\tau}$
	  and since $V_1^{\tau},V_2^{\tau}$ form a partition of $V$, the claimed inequality \eqref{eq : MultiplTerm} follows.
	  
	  Further, we claim that for $j=0,1$ it holds that
	  \begin{equation} \label{eq : FinalTerm}
	    \sum \limits_{i \in V_1^{t+j}} \sigma_i^F d(F_i + \nabla F_i s^t,P^1)
	    + \sum \limits_{i \in V_2^{t+j}} \sigma_i^F d(F_i + \nabla F_i s^t,P^2) \leq
	    \sum \limits_{i \in V} \sigma_{i}^F \delta^t d(F_i,P).
	  \end{equation}
	  From feasibility of $(s^t,\delta^t)$ for either $QP(\rho,V_1^{t})$ or $QP(\rho,V_1^{t+1})$
	  for $i \in V_1^t \cup V_1^{t+1}$ it follows that
	  \[\delta^{t} (\Beta_i^H H_i, - \Beta_i^G G_i)^T + F_i + \nabla F_i s^{t} \in P^1\]
	  and hence, using \eqref{eq : betaeff} and \eqref{eq : DistIfIn},
	  \begin{equation} \label{eq : FinalBound}
	    \sigma_i^F d(F_i + \nabla F_i s^{t}, P^1) \leq \sigma_i^F \norm{\delta^{t} (\Beta_i^H H_i, - \Beta_i^G G_i)^T}_1 =
	    \sigma_i^F \delta^{t} d(F_i,P).
	  \end{equation}

	  Again, for $i \in V_2^{t}$ or $i \in V_2^{t+1}$ it holds that
	  $\sigma_i^F d(F_i + \nabla F_i s^{t}, P^2) \leq \sigma_i^F \delta^{t} d(F_i,P)$ by analogous argumentation
	  and since $V_1^{t},V_2^{t}$ and $V_1^{t+1},V_2^{t+1}$ form a partition of $V$,
	  the claimed inequality \eqref{eq : FinalTerm} follows.
	  
	  Finally, we have
	  \begin{equation} \label{eq : InitialTerm}
	    - \sum \limits_{i \in V_1^{1}} \sigma_i^F d(F_i,P^1) - \sum \limits_{i \in V_2^{1}} \sigma_i^F d(F_i,P^2) =
	    - \sum \limits_{i \in V} \sigma_{i}^F d(F_i,P),
	  \end{equation}
	  due to the fact that $V_1^{1},V_2^{1}$ form a partition of $V$ and \eqref{eq : InitPart}.
	  
	  Similar arguments as above show
	  \begin{eqnarray*}
	    \sigma_i^h (\vert h_i + \nabla h_i s^{t} \vert - \vert h_i\vert) -
	    \sum \limits_{\tau = 1}^{t} \lambda_i^{h,\tau} \nabla h_i (s^{\tau} - s^{\tau-1}) & \leq & 
	    (\sigma_i^h - \tilde{\lambda}_i^{h}) (\delta^{t} - 1) \vert h_i \vert, i \in E, \\
	    \sigma_i^g ( (g_i + \nabla g_i s^{t})^+ - (g_i)^+ ) -
	    \sum \limits_{\tau = 1}^{t} \lambda_i^{g,\tau} \nabla g_i (s^{\tau} - s^{\tau-1}) & \leq &
	    (\sigma_i^g - \tilde{\lambda}_i^{g}) (\delta^{t} - 1) ( g_i )^+, i \in I.
	  \end{eqnarray*}
	  Taking this into account and putting together \eqref{eqn : TheLong}, \eqref{eq : MultiplTerm}, \eqref{eq : FinalTerm} and \eqref{eq : InitialTerm}
	  we obtain for $j=0,1$
	  \begin{eqnarray*}
	    r^{t+j}_{1-j} & \leq & - \sum \limits_{\tau = 1}^{t} \frac{1}{2} (s^{\tau} - s^{\tau-1})^T B (s^{\tau} - s^{\tau-1}) \\
	    && - \sum \limits_{i \in V} (\sigma_i^F - \tilde{\lambda}_{i}^F) (1 - \delta^t) d(F_i,P)
	    - \sum \limits_{i \in E} (\sigma_i^h - \tilde{\lambda}_i^{h}) (1 - \delta^{t}) \vert h_i \vert
	    - \sum \limits_{i \in I} (\sigma_i^g - \tilde{\lambda}_i^{g}) (1 - \delta^{t}) ( g_i )^+
	  \end{eqnarray*}
	  and hence \eqref{eqn : BoundMain} and \eqref{eqn : BoundMain2} follow by monotonicity
	  of $\delta$ and \eqref{EqPropSigma}. This completes the proof.
      \end{proof}
    
    \subsubsection{Searching for the next iterate}
    
      We choose the next iterate as a point from the polygonal line connecting the points $s_k^0, \ldots, s_k^{N_k}$.
      Each line segment $[s_k^{t-1},s_k^t]$ corresponds to the convex subproblem solved by Algorithm \ref{AlgSol}
      and hence each line search function $\hat\phi_k^t$ corresponds to the usual $\ell_1$ merit function from
      nonlinear programming. This makes it technically more difficult to prove
      the convergence behavior stated in Proposition \ref{pro : ToZeroAtN} which is also the motivation for the following
      procedure.
      
      First we parametrize the polygonal line connecting the points $s_k^0, \ldots, s_k^{N_k}$
      by its length as a curve $\hat s_k: [0,1] \to \R^n$ in the following way.
      We define $t_k(1) := N_k$, for every $\gamma \in [0,1)$
      we denote by $t_k(\gamma)$ the smallest number $t$ such that $S_k^t > \gamma S_k^{N_k}$ and we set $\alpha_k(1) := 1$,
      \[\alpha_k(\gamma) := \frac{\gamma S_k^{N_k} - S_k^{t_k(\gamma)-1}}{S_k^{t_k(\gamma)} - S_k^{t_k(\gamma)-1}}, \gamma \in [0,1), \]
      where $S_k^0 := 0, S_k^t := \sum_{\tau=1}^{t} \Vert s_k^{\tau} - s_k^{\tau-1} \Vert$ for $t=1, \ldots, N_k$.
      Then we define
      \[ \hat s_k (\gamma) = s_k^{t_k(\gamma)-1} + \alpha_k(\gamma)(s_k^{t_k(\gamma)} - s_k^{t_k(\gamma)-1}).\]
      Note that $\norm{\hat s_k (\gamma)} \leq \gamma S_k^{N_k}$.
      
      In order to simplify the proof of Proposition \ref{pro : ToZeroAtN},
      for $\gamma \in [0,1]$ we further consider the following line search functions
      \begin{equation}
      \begin{array}{c}
	Y_k(\gamma) := \phi_k^{t_k(\gamma)}(\alpha_k(\gamma)), \quad \hat Y_k(\gamma) := \hat\phi_k^{t_k(\gamma)}(\alpha_k(\gamma)), \\
	Z_k(\gamma) := (1 - \alpha_k(\gamma)) \hat\phi_k^{t_k(\gamma)}(0) + \alpha_k(\gamma) \hat\phi_k^{t_k(\gamma)}(1).
      \end{array}
      \end{equation}
      
      Now consider some sequence of positive numbers $\gamma^k_1 = 1, \gamma^k_2, \gamma^k_3, \ldots$ with
      $1 > \bar \gamma \geq \gamma^k_{j+1} / \gamma^k_j \geq \underline \gamma > 0$ for all $j \in \mathbb{N}$.
      Consider the smallest $j$, denoted by $j(k)$ such that for some given constant $\xi \in (0,1)$ one has
      \begin{equation} \label{eq : NextIterCond}
	Y_k(\gamma^k_j) - Y_k(0) \leq \xi \left( Z_k(\gamma_{j}^{k}) - Z_k(0) \right).
      \end{equation}
      Then the new iterate is given by
      \[ x_{k+1} := x_k + \hat s_k(\gamma^k_{j(k)}).\]
      As can be seen from the proof of Lemma \ref{lem : NextIterCons}, this choice ensures a decrease in merit function
      $\Phi$ defined in the next subsection.
      
      The following relations are direct consequences of the properties of $\phi_k^t$ and $\hat\phi_k^t$
      \begin{equation} \label{eq : PropOfNewLSF}
	\vert Y_k(\gamma) - \hat Y_k(\gamma) \vert = o(\gamma S_k^{N_k}), \quad
	\hat Y_k(\gamma) \leq Z_k(\gamma), \quad
	Z_k(\gamma) - Z_k(0) \leq 0.
      \end{equation}
      The last property holds due to Proposition \ref{pro : MainMerit} and
      \begin{equation} \label{eq : Z}
	Z_k(\gamma) - Z_k(0) = (1 - \alpha_k(\gamma)) r_{k,0}^{t_k(\gamma)} + \alpha_k(\gamma) r_{k,1}^{t_k(\gamma)},
      \end{equation}
      which follows from $\alpha_k(0) = 0$, $S_k^{t_k(0)-1} = 0$ and hence $\hat\phi_k^{t_k(0)}(0) = \hat\phi_k^{1}(0)$.
      We recall that $r^t_{k,0}$ and $r^{t}_{k,1}$ are defined by \eqref{eq : DefOfR}.

      \begin{lemma} \label{lem : WellDef}
	  The new iterate $x_{k+1}$ is well defined.
	\end{lemma}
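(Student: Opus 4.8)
The plan is to show that the index $j(k)$ selected by the test \eqref{eq : NextIterCond} exists, since $x_{k+1}=x_k+\hat s_k(\gamma^k_{j(k)})$ is well defined precisely when this smallest admissible $j$ is finite. As $\gamma^k_{j+1}/\gamma^k_j\le\bar\gamma<1$ forces $\gamma^k_j\downarrow 0$, it is enough to verify that \eqref{eq : NextIterCond} holds for all sufficiently small $\gamma$ put in place of $\gamma^k_j$. Since Step 2 of Algorithm \ref{AlgMPCC} already returns $x_k$ whenever $s^{N_k}=0$, I may assume $S_k^{N_k}>0$ and let $t^\ast$ denote the first index with $s^{t^\ast}\ne 0$.

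First I would reduce the test to a single inequality. At $\gamma=0$ we have $\alpha_k(0)=0$ and the argument $s$ equals $0$, where $\phi_k^t$ and $\hat\phi_k^t$ coincide; hence $Y_k(0)=\hat Y_k(0)=Z_k(0)$. Writing $Y_k(\gamma)-Y_k(0)=\big(Y_k(\gamma)-\hat Y_k(\gamma)\big)+\big(\hat Y_k(\gamma)-Z_k(\gamma)\big)+\big(Z_k(\gamma)-Z_k(0)\big)$ and invoking the three relations in \eqref{eq : PropOfNewLSF}, namely $\hat Y_k(\gamma)\le Z_k(\gamma)$, $|Y_k(\gamma)-\hat Y_k(\gamma)|=o(\gamma S_k^{N_k})$ and $Z_k(\gamma)-Z_k(0)\le 0$, I obtain
\[
Y_k(\gamma)-Y_k(0)\ \le\ o(\gamma S_k^{N_k})+\big(Z_k(\gamma)-Z_k(0)\big).
\]
Consequently \eqref{eq : NextIterCond} is implied by $o(\gamma S_k^{N_k})\le(1-\xi)\big(Z_k(0)-Z_k(\gamma)\big)$, and the whole lemma reduces to establishing a linear lower bound $Z_k(0)-Z_k(\gamma)\ge c\,\gamma S_k^{N_k}$ with a constant $c>0$.

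The core step is this lower bound, and it is where I expect the only real work. For $\gamma$ so small that $t_k(\gamma)=t^\ast$ (possible since $S_k^{t^\ast-1}=0<S_k^{t^\ast}=\|s^{t^\ast}\|$), the representation \eqref{eq : Z} gives $Z_k(0)-Z_k(\gamma)=-(1-\alpha)r_{k,0}^{t^\ast}-\alpha\,r_{k,1}^{t^\ast}$ with $\alpha=\alpha_k(\gamma)=\gamma S_k^{N_k}/\|s^{t^\ast}\|$. Because $s^\tau=0$ for all $\tau<t^\ast$, Proposition \ref{pro : MainMerit} yields $r_{k,0}^{t^\ast}\le 0$ and $-r_{k,1}^{t^\ast}\ge\tfrac12(s^{t^\ast})^TB\,s^{t^\ast}$, whence
\[
Z_k(0)-Z_k(\gamma)\ \ge\ \alpha\,\tfrac12(s^{t^\ast})^TB\,s^{t^\ast}\ =\ \gamma S_k^{N_k}\cdot\frac{(s^{t^\ast})^TB\,s^{t^\ast}}{2\|s^{t^\ast}\|}.
\]
This is the desired bound with $c:=(s^{t^\ast})^TB\,s^{t^\ast}/(2\|s^{t^\ast}\|)>0$, the positivity coming from $B$ being positive definite and $s^{t^\ast}\ne 0$.

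Finally I would combine the two estimates. By the $o$-property there is a threshold $\bar\gamma_0>0$, which I shrink if necessary so that also $t_k(\gamma)=t^\ast$ on $(0,\bar\gamma_0]$, such that $|Y_k(\gamma)-\hat Y_k(\gamma)|\le(1-\xi)c\,\gamma S_k^{N_k}$ there; the displayed chain together with the linear bound then gives \eqref{eq : NextIterCond} for every $\gamma\in(0,\bar\gamma_0]$. Since $\gamma^k_j\downarrow 0$, some $\gamma^k_j$ lies in $(0,\bar\gamma_0]$, so the smallest admissible $j(k)$ is finite and $x_{k+1}$ is well defined. The main obstacle is precisely the linear lower bound: one must recognise that for small $\gamma$ only the first nontrivial segment $t^\ast$ is active and extract from Proposition \ref{pro : MainMerit} the strictly positive curvature term $\tfrac12(s^{t^\ast})^TB\,s^{t^\ast}$, noting that the merely nonpositive quantity $r_{k,0}^{t^\ast}$ contributes nothing to the bound.
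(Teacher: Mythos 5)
Your proof is correct and follows essentially the same route as the paper's: restrict to $\gamma$ small enough that only the first nontrivial segment $t^\ast=t_k(0)$ is active, use $Y_k(0)=Z_k(0)$, the bound $\hat Y_k\leq Z_k$ and the first-order approximation property to reduce \eqref{eq : NextIterCond} to dominating the $o(\gamma S_k^{N_k})$ error by the linear decrease of $Z_k$, which Proposition \ref{pro : MainMerit} supplies via the strictly negative $r_{k,1}^{t^\ast}$. The only cosmetic difference is that you make the strict negativity explicit through the curvature term $\tfrac12 (s^{t^\ast})^T B s^{t^\ast}$, whereas the paper works with $-r_{k,1}^{t_k(0)}$ directly.
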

	\begin{proof}
	  In order to show that the new iterate is well defined, we have to prove the existence of some $j$ such that \eqref{eq : NextIterCond}
	  is fulfilled. Note that $S_k^{t_k(0) - 1} = 0$ and $S_k^{t_k(0)} > 0$. There is some $\delta_k > 0$ such that
	  $\vert Y_k(\gamma) - \hat{Y}_k(\gamma) \vert \leq \frac{-(1 - \xi)r^{t_k(0)}_{k,1} \gamma S_k^{N_k}}{S_k^{t_k(0)}}$,
	  whenever $\gamma S_k^{N_k} \leq \delta_k$. Since $\lim_{j \to \infty} \gamma_j^k = 0$, we can choose $j$ sufficiently large
	  to fulfill $\gamma_j^k S_k^{N_k} < \min \{ \delta_k, S_k^{t_k(0)} \}$ and then $t_k(\gamma_j^k) = t_k(0)$ and
	  $\alpha_k(\gamma_j^k) = \gamma_j^k S_k^{N_k} / S_k^{t_k(0)}$, since $S_k^{t_k(0) - 1} = 0$.
	  This yields
	  \begin{equation} \label{eq : MainTrick}
	    Y_k(\gamma_j^k) - \hat{Y}_k(\gamma_j^k) \leq
	    - (1 - \xi) \alpha_k(\gamma_j^k) r^{t_k(\gamma_j^k)}_{k,1}.
	  \end{equation}
	  Then by second property of \eqref{eq : PropOfNewLSF}, \eqref{eq : Z}, taking into account 
	  $r^{t_k(\gamma_j^k)}_{k,0} \leq 0$ by Proposition \ref{pro : MainMerit}
	  and $Y_k(0) = Z_k(0)$ we obtain
	  \begin{eqnarray*}
	    Y_k(\gamma_j^k) - Y_k(0) & \leq &
	    \hat Y_k(\gamma_j^k) - Y_k(0) - (1 - \xi) \alpha_k(\gamma_j^k) r^{t_k(\gamma_j^k)}_{k,1} \\
	    & \leq & \xi (Z_k(\gamma_j^k) - Z_k(0)) + (1 - \xi) \left( Z_k(\gamma_j^k) - Z_k(0) - \alpha_k(\gamma_j^k) r^{t_k(\gamma_j^k)}_{k,1} \right) \\
	    & \leq & \xi (Z_k(\gamma_j^k) - Z_k(0)) + (1 - \xi) (1 - \alpha_k(\gamma_j^k)) r^{t_k(\gamma_j^k)}_{k,0} \leq \xi (Z_k(\gamma_j^k) - Z_k(0)).
	  \end{eqnarray*}
	  Thus \eqref{eq : NextIterCond} is fulfilled for this $j$ and the lemma is proved.
	\end{proof}
	
    \subsection{Convergence of the basic algorithm}
      
      We consider the behavior of the Algorithm \ref{AlgMPCC} when it does not prematurely stop and it generates an infinite
      sequence of iterates
      \[x_k, B_k, (s_k^{t}, \delta_k^{t}), \lambda_k^{t}, (V_{1,k}^{t}, V_{2,k}^{t}), t = 0, \ldots, N_k \textrm{ and }
      \Beta_k, \underline\lambda_k^{N_k}, \overline\lambda_k^{N_k}.\]
      Note that $\delta_k^{N_k} < \zeta$. We discuss the convergence behavior under the following assumption.
      \begin{assumption} \label{ass : AlgMPCC}
	\begin{enumerate}
	  \item There exist constants $C_x, C_s, C_{\lambda}$ such that
		$$\norm{x_k} \leq C_x, \quad S_k^{N_k} \leq C_s, \quad
		\hat \lambda_k^h, \hat \lambda_k^g, \hat \lambda_k^F \leq C_{\lambda}$$
		for all $k$, where $\hat \lambda_k^h := \max_{i \in E} \{ \tilde \lambda_{i,k}^h \}$,
		$\hat \lambda_k^g := \max_{i \in I} \{ \tilde \lambda_{i,k}^g \}$,
		$\hat \lambda_k^F := \max_{i \in V} \{ \tilde \lambda_{i,k}^F \}$.
	  \item There exist constants $\bar C_{B}, \underbar C_{B}$ such that $\underbar{C}_B \leq \lambda(B_k), \norm{B_k} \leq \bar C_B$
		for all $k$, where $\lambda(B_k)$ denotes the smallest eigenvalue of $B_k$.
	\end{enumerate}
      \end{assumption}
      
      For our convergence analysis we need one more merit function
	\[
	  \Phi_k(x) := f(x) + \sum \limits_{i \in E} \sigma_{i,k}^h \vert h_i(x) \vert + \sum \limits_{i \in I} \sigma_{i,k}^g ( g_i(x) )^+
	  + \sum \limits_{i \in V} \sigma_{i,k}^F d(F_i(x),P).
	\]
	
	\begin{lemma} \label{lem : GeneralMerit}
	 For each $k$ and for any $\gamma \in [0,1]$ it holds that
	  \begin{equation} \label{eq: generals}
	    \Phi_k(x_k + \hat s_k(\gamma)) \leq Y_k(\gamma) \quad \textrm{ and } \quad \Phi_k(x_k) = Y_k(0).
	  \end{equation}
	\end{lemma}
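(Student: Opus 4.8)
The plan is to reduce both assertions to a direct comparison of $\Phi_k$ with the single piece $\phi_k^{t}$ that $\hat s_k(\gamma)$ lives on, exploiting that the only discrepancy between the two expressions sits in the vanishing-constraint terms. Fix $\gamma \in [0,1]$ and abbreviate $t := t_k(\gamma)$, $\alpha := \alpha_k(\gamma)$ and $s := \hat s_k(\gamma) = (1-\alpha)s_k^{t-1} + \alpha s_k^{t}$. By the very definitions, $s$ is exactly the argument entering $\phi_k^{t}(\alpha)$, so $Y_k(\gamma) = \phi_k^{t}(\alpha)$, while $\Phi_k(x_k + \hat s_k(\gamma)) = \Phi_k(x_k + s)$; the two share identical objective, equality- and inequality-constraint contributions, and the difference is confined to the sum over $V$.

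For the inequality I would split $\sum_{i \in V}\sigma_{i,k}^F\, d(F_i(x_k+s),P)$ along the partition $(V_{1,k}^{t},V_{2,k}^{t})$ of $V$. Since $P = P^1 \cup P^2$, property \eqref{eq : DistOfCup} gives $d(z,P) = \min\{d(z,P^1),d(z,P^2)\} \le d(z,P^{j})$ for $j=1,2$, hence $d(F_i(x_k+s),P) \le d(F_i(x_k+s),P^1)$ on $V_{1,k}^{t}$ and $d(F_i(x_k+s),P) \le d(F_i(x_k+s),P^2)$ on $V_{2,k}^{t}$. Multiplying by the nonnegative weights $\sigma_{i,k}^F$ and summing yields $\Phi_k(x_k + \hat s_k(\gamma)) \le \phi_k^{t}(\alpha) = Y_k(\gamma)$, which is the first claim.

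For the equality at $\gamma=0$ I would first note that $S_k^{t_k(0)-1}=0$ forces $s_k^{t_k(0)-1}=s_k^{0}=0$ and $\alpha_k(0)=0$, so $\hat s_k(0)=0$ and therefore $\Phi_k(x_k+\hat s_k(0))=\Phi_k(x_k)$; the inequality just proved already gives $\Phi_k(x_k)\le Y_k(0)$. The reverse inequality is the crux: it amounts to showing $d(F_i(x_k),P^{j})=d(F_i(x_k),P)$ for $i \in V_{j,k}^{t_k(0)}$. Here I would invoke Proposition \ref{Prop : Props}(1), by which $(s_k^{t_k(0)-1},\delta_k^{t_k(0)-1})=(0,\delta)$, with $\delta := \delta_k^{t_k(0)-1}\in[0,1]$, is feasible for $QP(\rho,V_{1,k}^{t_k(0)})$. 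For $i \in V_{1,k}^{t_k(0)}$ this feasibility means $\delta(\Beta_i^H H_i,-\Beta_i^G G_i)^T + F_i(x_k) \in P^1$; using this point as a competitor in the infimum defining $d(F_i(x_k),P^1)$ and recalling $\norm{(\Beta_i^H H_i,-\Beta_i^G G_i)^T}_1 = d(F_i(x_k),P)$ from \eqref{eq : betaeff}, I obtain $d(F_i(x_k),P^1) \le \delta\, d(F_i(x_k),P) \le d(F_i(x_k),P)$. As $P^1 \subset P$ gives the opposite bound $d(F_i(x_k),P) \le d(F_i(x_k),P^1)$, equality follows, and the symmetric argument on $V_{2,k}^{t_k(0)}$ handles $P^2$. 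Consequently the two $V$-sums coincide and $Y_k(0)=\Phi_k(x_k)$.

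The only genuinely delicate point is this last equality, and specifically the matching of distances on $V_{j,k}^{t_k(0)}$: one cannot simply quote \eqref{eq : InitPart}, which concerns the initial partition $V_{1,k}^{1}$ only, because $t_k(0)$ may exceed $1$ when the first few segments have zero length. The feasibility-plus-\eqref{eq : betaeff} estimate above is what makes the argument go through for the actually relevant partition $V_{1,k}^{t_k(0)}$ regardless of the value of $t_k(0)$; everything else is term-by-term bookkeeping.
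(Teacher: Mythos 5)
Your proof is correct. For the first inequality it is exactly the paper's argument: split the $V$-sum along the partition $(V_{1,k}^{t_k(\gamma)},V_{2,k}^{t_k(\gamma)})$ and use $d(\cdot,P)=\min\{d(\cdot,P^1),d(\cdot,P^2)\}\leq d(\cdot,P^j)$ from \eqref{eq : DistOfCup}. For the equality $\Phi_k(x_k)=Y_k(0)$, however, you go beyond the paper: the paper simply says that this ``follows from \eqref{eq : InitPart}'', which is a statement about the \emph{initial} partition $V_{1,k}^{1}$ and therefore covers only the case $t_k(0)=1$. You correctly observe that $t_k(0)>1$ can occur (the algorithm only forbids $(s^{t+1},\delta^{t+1})=(s^{t},\delta^{t})$ as a pair, so early segments may have $s_k^{t}=s_k^{t-1}=0$ with only $\delta$ changing), and you close this gap by using Proposition \ref{Prop : Props}(1): feasibility of $(0,\delta_k^{t_k(0)-1})$ for $QP(\rho,V_{1,k}^{t_k(0)})$ together with \eqref{eq : betaeff} gives $d(F_i(x_k),P^j)\leq \delta_k^{t_k(0)-1}\,d(F_i(x_k),P)\leq d(F_i(x_k),P)$ for $i\in V_{j,k}^{t_k(0)}$, while $P^j\subset P$ gives the reverse bound. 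This is a genuine improvement in rigor: the same argument is also what is needed to justify the paper's unproved assertion $\hat\phi_k^{t_k(0)}(0)=\hat\phi_k^{1}(0)$ used to derive \eqref{eq : Z}, so your extra step is not merely pedantic but repairs a small expository gap that the paper relies on elsewhere.
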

	\begin{proof}
	  The first claim follows from the definitions of $\Phi_k$ and $Y_k$ and the estimate
	  \[d(F_i(x_k + s),P^1), d(F_i(x_k + s),P^2) \geq \min\{ d(F_i(x_k + s),P^1), d(F_i(x_k + s),P^2) \} =
	  d(F_i(x_k + s),P),\]
	  which holds by \eqref{eq : DistOfCup}. The second claim follows from \eqref{eq : InitPart}.
	\end{proof}
      
      A simple consequence of the way that we define the penalty parameters in \eqref{eqn : PenalParam} is the following lemma.
      \begin{lemma} \label{lem : Sigmas}
	Under Assumption \ref{ass : AlgMPCC} there exists some $\bar k$ such that for all $k \geq \bar k$ the penalty parameters remain constant,
	$\bar \sigma := \sigma_{k}$ and consequently $\Phi_k(x) = \Phi_{\bar k}(x)$.
      \end{lemma}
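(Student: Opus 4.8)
The plan is to show that each of the finitely many penalty parameters $\sigma_{i,k}^h$ ($i\in E$), $\sigma_{i,k}^g$ ($i\in I$) and $\sigma_{i,k}^F$ ($i\in V$) is modified only finitely often; since there are only finitely many of them, they must all be eventually constant, and the concluding claim $\Phi_k=\Phi_{\bar k}$ then follows at once, because $\Phi_k$ depends on $k$ only through the coefficients $\sigma_k$ in its definition.

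First I would record the uniform bound furnished by Assumption \ref{ass : AlgMPCC}(1): for every index $i$ and every $k$ we have $\tilde\lambda_{i,k}^h\le\hat\lambda_k^h\le C_\lambda$, and likewise $\tilde\lambda_{i,k}^g\le C_\lambda$ and $\tilde\lambda_{i,k}^F\le C_\lambda$. Hence whenever one of the parameters is reset through the first branch of \eqref{eqn : PenalParam}, its new value is at most $\xi_2 C_\lambda$; in particular each parameter sequence is bounded above by $\max\{\sigma_{i,-1},\,\xi_2 C_\lambda\}$.

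Next I would observe that the two-case structure of \eqref{eqn : PenalParam} makes every sequence non-decreasing and forces a geometric jump at each genuine update. Consider, say, $\sigma_{i,k}^h$. An update at step $k$ occurs exactly when $\sigma_{i,k-1}^h<\xi_1\tilde\lambda_{i,k}^h$, and then $\sigma_{i,k}^h=\xi_2\tilde\lambda_{i,k}^h>(\xi_2/\xi_1)\sigma_{i,k-1}^h$, where $\xi_2/\xi_1>1$ since $1<\xi_1<\xi_2$; between updates the value is left unchanged by the ``else'' branch. Letting $k_1<k_2<\cdots$ denote the update times and using constancy in between, an easy induction gives $\sigma_{i,k_m}^h>(\xi_2/\xi_1)^m\,\sigma_{i,-1}^h$. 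Because $\sigma_{i,-1}^h>0$ and the left-hand side never exceeds $\xi_2 C_\lambda$, we must have $m<\log_{\xi_2/\xi_1}\!\big(\xi_2 C_\lambda/\sigma_{i,-1}^h\big)$, so only finitely many updates are possible. The arguments for $\sigma^g$ and $\sigma^F$ are entirely analogous, with the same geometric factor $\xi_2/\xi_1$ and the same upper bound $\xi_2 C_\lambda$, so every parameter stabilizes.

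Finally, choosing $\bar k$ larger than the last update time of any of the finitely many parameters yields $\sigma_k=\sigma_{\bar k}=:\bar\sigma$ for all $k\ge\bar k$, and substituting into the definition of $\Phi_k$ gives $\Phi_k(x)=\Phi_{\bar k}(x)$, as claimed. The only point needing a little care is verifying that each update strictly multiplies the parameter by at least $\xi_2/\xi_1$ so that the geometric factors telescope correctly; this is immediate once one notes that the ``else'' branches leave the value fixed. The hypothesis doing the real work is simply the uniform multiplier bound $C_\lambda$ from Assumption \ref{ass : AlgMPCC}(1), which prevents the upward geometric drift from continuing indefinitely.
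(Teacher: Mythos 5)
Your proof is correct and supplies exactly the argument the authors had in mind: the paper states this lemma without proof, calling it ``a simple consequence'' of the update rule \eqref{eqn : PenalParam}, and your reasoning --- each parameter grows by a factor of at least $\xi_2/\xi_1>1$ at every genuine update while remaining bounded above by $\max\{\sigma_{i,-1},\xi_2 C_\lambda\}$ thanks to Assumption \ref{ass : AlgMPCC}(1), so only finitely many updates can occur --- is the standard way to fill that gap. The only cosmetic remark is that the paper's update rule for $\sigma^F$ tests against $\tilde\lambda^F_{i,k-1}$ (apparently a typo for $\tilde\lambda^F_{i,k}$); your uniform treatment is the sensible reading and does not affect the argument.
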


      \begin{remark}
	Note that we do not use $\Phi_k$ for calculating the new iterate because its first order approximation is in general not convex
	on the line segments connecting $s_k^{t-1}$ and $s_k^{t}$ due to the involved min operation.
      \end{remark}
      
      \begin{lemma} \label{lem : NextIterCons}
	Assume that Assumption \ref{ass : AlgMPCC} is fulfilled. Then
	\begin{equation} \label{eq : FToZero}
	  \lim_{k \to \infty} Y_k(\gamma_{j(k)}^{k}) - Y_k(0) = 0. 
	\end{equation}
      \end{lemma}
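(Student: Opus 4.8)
The plan is to turn the inequality \eqref{eq : FToZero} into a squeeze argument built on the monotone convergence of the merit function $\Phi_k$ along the iterates. The first step is to observe that the accepted step already makes the quantity in question nonpositive: by the selection rule \eqref{eq : NextIterCond} and the third relation in \eqref{eq : PropOfNewLSF} we have
\[ Y_k(\gamma_{j(k)}^k) - Y_k(0) \;\leq\; \xi\left( Z_k(\gamma_{j(k)}^k) - Z_k(0) \right) \;\leq\; 0 \]
for every $k$. Hence each term of the sequence is $\leq 0$, and it remains only to bound it from below by a null sequence.

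The next step is to connect this decrease to $\Phi_k$. Since the new iterate is $x_{k+1} = x_k + \hat s_k(\gamma_{j(k)}^k)$, Lemma \ref{lem : GeneralMerit} gives $\Phi_k(x_{k+1}) \leq Y_k(\gamma_{j(k)}^k)$ and $\Phi_k(x_k) = Y_k(0)$, so that
\[ \Phi_k(x_{k+1}) - \Phi_k(x_k) \;\leq\; Y_k(\gamma_{j(k)}^k) - Y_k(0) \;\leq\; 0. \]
By Lemma \ref{lem : Sigmas} there exists $\bar k$ such that $\sigma_k \equiv \bar\sigma$ and $\Phi_k \equiv \Phi_{\bar k} =: \Phi$ for all $k \geq \bar k$; in particular both occurrences of the merit function in the last display may be replaced by the single fixed function $\Phi$ once $k \geq \bar k$, and the sequence $\bigl(\Phi(x_k)\bigr)_{k \geq \bar k}$ is nonincreasing.

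Finally I would invoke Assumption \ref{ass : AlgMPCC}(1), which confines all iterates to the compact ball $\{\norm{x} \leq C_x\}$. On this ball the continuous function $\Phi$ is bounded below, so the nonincreasing sequence $\Phi(x_k)$ converges, whence $\Phi(x_{k+1}) - \Phi(x_k) \to 0$. Combining the two inequalities above,
\[ \Phi(x_{k+1}) - \Phi(x_k) \;\leq\; Y_k(\gamma_{j(k)}^k) - Y_k(0) \;\leq\; 0 \qquad (k \geq \bar k), \]
and letting $k \to \infty$ the squeeze theorem yields \eqref{eq : FToZero}.

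This is essentially a standard sufficient-decrease-plus-boundedness telescoping, so I do not anticipate a genuine obstacle. The one point demanding care is that the estimate of Lemma \ref{lem : GeneralMerit} must be applied with identical penalty parameters at $x_k$ and $x_{k+1}$; this is precisely why the stabilization of $\sigma_k$ furnished by Lemma \ref{lem : Sigmas} has to be established \emph{before} passing to the limit, rather than working with the varying functions $\Phi_k$.
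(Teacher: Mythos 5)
Your proposal is correct and follows essentially the same route as the paper's proof: stabilize the penalty parameters via Lemma \ref{lem : Sigmas}, bound $\Phi_k(x_{k+1})-\Phi_k(x_k)$ by $Y_k(\gamma_{j(k)}^k)-Y_k(0)\leq 0$ using Lemma \ref{lem : GeneralMerit} and the acceptance rule \eqref{eq : NextIterCond}, and conclude from the convergence of the monotone, bounded-below sequence $\Phi(x_k)$. The only (immaterial) difference is that the paper phrases the final step as convergence of the telescoping series of nonpositive terms, whereas you argue directly that consecutive differences of a convergent monotone sequence tend to zero and then squeeze.
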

      \begin{proof}
	Take an existed $\bar k$ from Lemma \ref{lem : Sigmas}. Then we have for $k \geq \bar k$
	\[ \Phi_{k+1}(x_{k+1}) = \Phi_{\bar k}(x_{k+1}) = \Phi_{\bar k}(x_{k} + \hat s_k(\gamma^k_{j(k)})) = \Phi_{k}(x_{k} + \hat s_k(\gamma^k_{j(k)}))
	\leq Y_k(\gamma_{j(k)}^{k}) < Y_k(0) = \Phi_k(x_k) \]
	and therefore $\Phi_{k+1}(x_{k+1}) - \Phi_k(x_k) \leq Y_k(\gamma_{j(k)}^{k}) - Y_k(0) < 0$.
	Hence the sequence $\Phi_k(x_k)$ is monotonically decreasing and therefore convergent, because it is bounded below by Assumption \ref{ass : AlgMPCC}.
	Hence
	\[ - \infty < \lim_{k \to \infty} \Phi_k(x_k) - \Phi_{\bar k}(x_{\bar k}) = \sum_{k = \bar k}^{\infty} (\Phi_{k+1}(x_{k+1}) - \Phi_k(x_k))
	\leq \sum_{k = \bar k}^{\infty} (Y_k(\gamma_{j(k)}^{k}) - Y_k(0)) \]
	and the assertion follows.
      \end{proof}
      
      \begin{proposition} \label{pro : ToZeroAtN}
	Assume that Assumption \ref{ass : AlgMPCC} is fulfilled. Then
	\begin{equation} \label{eq : RTtoZero}
	  \lim_{k \to \infty} \hat Y_k(1) - \hat Y_k(0) = 0 
	\end{equation}
	and consequently
	\begin{equation} \label{eq : STtoZero}
	  \lim_{k \to \infty} \norm{s_k^{N_k}} = 0.
	\end{equation}
	\end{proposition}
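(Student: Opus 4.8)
The plan is to establish \eqref{eq : RTtoZero} first and then derive \eqref{eq : STtoZero} from it. For the implication, note that $t_k(1)=N_k$, $\alpha_k(1)=1$, $t_k(0)=1$, $\alpha_k(0)=0$, so by \eqref{eq : Z} one has $\hat Y_k(1)-\hat Y_k(0)=Z_k(1)-Z_k(0)=r_{k,1}^{N_k}$. Proposition \ref{pro : MainMerit}, inequality \eqref{eqn : BoundMain2}, together with the eigenvalue bound $\lambda(B_k)\geq\underbar{C}_B$ from Assumption \ref{ass : AlgMPCC}, gives $\sum_{\tau=1}^{N_k}\norm{s_k^{\tau}-s_k^{\tau-1}}^2\leq \frac{2}{\underbar{C}_B}\bigl(\hat Y_k(0)-\hat Y_k(1)\bigr)$. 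Since $s_k^0=0$, writing $s_k^{N_k}=\sum_{\tau=1}^{N_k}(s_k^{\tau}-s_k^{\tau-1})$ and applying Cauchy--Schwarz with the uniform bound $N_k\leq C_t$ of Proposition \ref{Prop : Props} yields $\norm{s_k^{N_k}}^2\leq N_k\sum_{\tau=1}^{N_k}\norm{s_k^{\tau}-s_k^{\tau-1}}^2\leq \frac{2C_t}{\underbar{C}_B}\bigl(\hat Y_k(0)-\hat Y_k(1)\bigr)$, so that \eqref{eq : RTtoZero} indeed forces \eqref{eq : STtoZero}.

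To prove \eqref{eq : RTtoZero} I would start by combining Lemma \ref{lem : NextIterCons} with the acceptance rule \eqref{eq : NextIterCond}: since $0\geq \xi\bigl(Z_k(\gamma_{j(k)}^k)-Z_k(0)\bigr)\geq Y_k(\gamma_{j(k)}^k)-Y_k(0)\to 0$ by the third relation in \eqref{eq : PropOfNewLSF}, a squeeze gives $Z_k(\gamma_{j(k)}^k)-Z_k(0)\to 0$. Recalling $\hat Y_k(1)-\hat Y_k(0)=Z_k(1)-Z_k(0)\leq 0$, I argue by contradiction: assume that along a subsequence $Z_k(1)-Z_k(0)\leq -c<0$. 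The key structural ingredient I need is the chord inequality $Z_k(\gamma)-Z_k(0)\leq \gamma\bigl(Z_k(1)-Z_k(0)\bigr)$ for $\gamma\in[0,1]$ for the piecewise-linear model $Z_k$ from \eqref{eq : Z}. Granting it, the accepted value satisfies $\vert Z_k(\gamma_{j(k)}^k)-Z_k(0)\vert\geq c\,\gamma_{j(k)}^k$, whence $\gamma_{j(k)}^k\to 0$; in particular $\gamma_{j(k)}^k<1=\gamma_1^k$, so $j(k)>1$ for all large $k$ in the subsequence.

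For such $k$ the previous trial $\gamma':=\gamma_{j(k)-1}^k$ violates \eqref{eq : NextIterCond}. Using $Y_k(0)=Z_k(0)$, the first-order estimate $Y_k(\gamma')=\hat Y_k(\gamma')+o(\gamma' S_k^{N_k})$ from \eqref{eq : PropOfNewLSF}, and $\hat Y_k(\gamma')\leq Z_k(\gamma')$, the failed test rearranges to $(1-\xi)\vert Z_k(\gamma')-Z_k(0)\vert< o(\gamma' S_k^{N_k})$. Applying the chord inequality at $\gamma'$ gives $\vert Z_k(\gamma')-Z_k(0)\vert\geq c\,\gamma'$, so after dividing by $\gamma'>0$ and using $S_k^{N_k}\leq C_s$ from Assumption \ref{ass : AlgMPCC} one obtains $(1-\xi)c< \varepsilon(\gamma' S_k^{N_k})\,S_k^{N_k}\leq \varepsilon(\gamma' S_k^{N_k})\,C_s$, where $\varepsilon(r)\to 0$ as $r\to 0$. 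Finally $\gamma'\leq \gamma_{j(k)}^k/\underline\gamma\to 0$ and $S_k^{N_k}\leq C_s$ force $\gamma' S_k^{N_k}\to 0$, hence the right-hand side tends to $0$, contradicting $(1-\xi)c>0$. This contradiction establishes \eqref{eq : RTtoZero}.

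The routine parts are the two reductions above, which only use the already-proven Proposition \ref{pro : MainMerit} and the relations \eqref{eq : PropOfNewLSF}. The \emph{main obstacle} is the chord inequality for $Z_k$: although each segment function $\hat\phi_k^t$ is convex by Lemma \ref{lem : convex}, $Z_k$ is merely the piecewise-linear interpolation of the endpoint values $\hat\phi_k^t(0),\hat\phi_k^t(1)$, and the partition $(V_{1,k}^t,V_{2,k}^t)$ — hence the distance terms entering $\hat\phi_k^t$ — changes at every breakpoint of the polygonal line; one must verify that these junction effects do not spoil the chord bound, i.e.\ that the successive slopes of $Z_k$ do not decrease. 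A secondary point needing care is the uniformity in $k$ of the $o(\cdot)$ term, which I would justify from the boundedness $\norm{x_k}\leq C_x$ in Assumption \ref{ass : AlgMPCC}, the $C^1$ regularity of the problem data, and the eventual constancy of the penalty parameters guaranteed by Lemma \ref{lem : Sigmas}.
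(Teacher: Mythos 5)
Your reduction of \eqref{eq : STtoZero} to \eqref{eq : RTtoZero} is correct and coincides with the paper's (via \eqref{eqn : BoundMain2}, Cauchy--Schwarz and $N_k\leq C_t$), and your opening observations ($\hat Y_k(1)-\hat Y_k(0)=r_{k,1}^{N_k}$ and the squeeze giving $Z_k(\gamma^k_{j(k)})-Z_k(0)\to 0$) are fine. The gap is exactly where you suspect it: the chord inequality $Z_k(\gamma)-Z_k(0)\leq\gamma\bigl(Z_k(1)-Z_k(0)\bigr)$ is \emph{false} in general, and no analysis of the junction effects will rescue it. $Z_k$ is the piecewise-linear interpolation of the endpoint values $\hat\phi_k^t(0),\hat\phi_k^t(1)$, and Proposition \ref{pro : MainMerit} only guarantees that all these values lie below $Z_k(0)$ --- it gives no monotonicity of the successive slopes. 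A configuration fully consistent with \eqref{eqn : BoundMain}--\eqref{eqn : BoundMain2}: $N_k=2$, $\norm{s_k^1}\sim\sqrt{\epsilon_k}$ with $r^1_{k,1},r^{2}_{k,0}\sim-\epsilon_k$, and $\norm{s_k^2-s_k^1}\sim 1$ with $r^2_{k,1}\sim-1$, where $\epsilon_k\downarrow 0$. At the breakpoint $\gamma\sim\sqrt{\epsilon_k}$ one has $Z_k(\gamma)-Z_k(0)\sim-\epsilon_k$ while the chord value is $\sim-\sqrt{\epsilon_k}$, so the inequality fails by an unbounded factor. Worse, this is precisely the dangerous case for the whole proposition: if the backtracking retreats into that first, asymptotically degenerate segment, then $\vert Z_k(\gamma')-Z_k(0)\vert$ and the linearization error are both small, your rearranged failed test $(1-\xi)\vert Z_k(\gamma')-Z_k(0)\vert<o(\gamma'S_k^{N_k})$ yields no contradiction, and $\gamma^k_{j(k)}\to 0$ with $Z_k(\gamma^k_{j(k)})-Z_k(0)\to0$ is perfectly compatible with $Z_k(1)-Z_k(0)\leq-c$.

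The paper's proof is built to circumvent exactly this. After passing to a subsequence with $N_k=\bar N$ and convergent $S_k^t$, $r_{k,0}^t$, $r_{k,1}^t$, it introduces $\bar\tau$, the first segment with positive limiting length, and uses \eqref{eq : RvsSrelat} (which ties $r_{k,0}^{t+1},r_{k,1}^t$ to $(S_k^t)^2$ through $\underbar{C}_B$) to conclude that all $\bar r^t$ with $t\geq\bar\tau$ are bounded away from zero. It then shows the line search cannot retreat past segment $\bar\tau$: an auxiliary index $\tilde j(k)$ is exhibited for which $\gamma^k_{\tilde j(k)}S_k^{\bar N}$ lands in a window $(S_k^{\bar\tau-1},\delta]$ where \eqref{eq : NextIterCond} provably holds, forcing $j(k)\leq\tilde j(k)$, $t_k(\gamma^k_{j(k)})=\bar\tau$ and $\alpha_k(\gamma^k_{j(k)})$ bounded below, whence $Y_k(\gamma^k_{j(k)})-Y_k(0)$ is bounded away from zero, contradicting \eqref{eq : FToZero}. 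If you want to keep your streamlined structure you must replace the chord inequality by an argument of this kind that controls \emph{where} on the polygonal line the accepted step lands. Your secondary point about the uniformity in $k$ of the $o(\cdot)$ term is handled as you suggest, but it is not the real difficulty.
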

	\begin{proof}
	We prove \eqref{eq : RTtoZero} by contraposition. Assuming on the contrary that \eqref{eq : RTtoZero} does not hold,
	by taking into account $\hat Y_k(1) - \hat Y_k(0) \leq 0$ by Proposition \ref{pro : MainMerit}, there
	exists a subsequence $K = \{ k_1, k_2, \ldots \}$ such that $\hat Y_k(1) - \hat Y_k(0) \leq \bar r < 0$.
	By passing to a subsequence we can assume that for all $k \in K$ we have $k \geq \bar k $ with $\bar k$ given by Lemma \ref{lem : Sigmas}
	and $N_k = \bar N$, where we have taken into account \eqref{eq : StepsBound}. By passing to a subsequence once more
	we can also assume that
	\[ \lim_{k \setto K \infty} S_k^t = \bar S^t, \lim_{k \setto K \infty} r_{k,1}^t = \bar r_1^t,
	\lim_{k \setto K \infty} r_{k,0}^t = \bar r_0^t, \,\, \forall t \in \{1, \ldots, \bar N\}, \]
	where $r_{k,1}^t$ and $r_{k,0}^t$ are defined by \eqref{eq : DefOfR}.
	Note that $\bar r_1^{\bar N} \leq \bar r < 0$.
	
	Let us first consider the case $\bar{S}^{\bar N} = 0$. 
	There exists $\delta > 0$ such that $\vert Y_k(\gamma) - \hat{Y}_k(\gamma) \vert
	\leq (\xi - 1) \bar{r}_1^{\bar N} \gamma S_k^{\bar N} \, \forall k \in K,$
	whenever $\gamma S_k^{\bar N} \leq \delta$.
	Since $\bar{S}^{\bar N} = 0$ we can assume that
	$S_k^{\bar N} \leq \min \{ \delta, 1/2 \} \, \forall k \in K$. Then
	\[
	  Y_k(1) - Y_k(0) \leq r_{k,1}^{\bar N} + (\xi - 1) \bar{r}_1^{\bar N} S_k^{\bar N}
	  \leq r_{k,1}^{\bar N} + (\xi - 1) r_{k,1}^{\bar N} = \xi r_{k,1}^{\bar N} = \xi (Z_k(1) - Z_k(0)) \leq \frac{\xi \bar{r}_1^{\bar N}}{2} < 0
	\]
	and this implies that for the next iterate we have $j(k) = 1$ and hence $\gamma_{j(k)}^k = 1$, contradicting \eqref{eq : FToZero}.
	
	Now consider the case $\bar{S}^{N} \neq 0$ and let us define the number $\bar \tau := \max \{ t \mv \bar{S}^{t} = 0 \} + 1$.
	Note that Proposition \ref{pro : MainMerit} yields
	\begin{equation} \label{eq : RvsSrelat}
	  r_{k,1}^t, r_{k,0}^{t+1} \leq - \frac{\lambda(B_k)}{2} \sum_{\tau = 1}^{t} \norm{s_k^{\tau} - s_k^{\tau -1}}^2
	  \leq - \frac{\underbar C_{B}}{2} \frac{1}{t} \left( \sum_{\tau = 1}^{t} \norm{s_k^{\tau} - s_k^{\tau -1}} \right)^2 
	  = - \frac{\underbar C_{B}}{2} \frac{1}{t} (S_k^t)^2
	\end{equation}
	and therefore $\tilde r := \max_{t > \bar \tau} \bar r^t < 0$, where $\bar r^t := \max\{ \bar r_0^t, \bar r_1^t \}$.
	By passing to a subsequence we can assume
	that for every $t > \bar \tau$ and every $k \in K$ we have $r_{k,0}^t,r_{k,1}^{t} \leq \frac{\bar r^t}{2}$.
	
	Now assume that for infinitely many $k \in K$ we have $\gamma_{j(k)}^k S_k^{\bar N} \geq S_k^{\bar \tau}$, i.e.
	$t_k(\gamma_{j(k)}^{k}) > \bar \tau$.
	Then we conclude
	\[
	  Y_k(\gamma_{j(k)}^{k}) - Y_k(0) \leq \xi (Z_k(\gamma_{j(k)}^{k}) - Z_k(0)) = 
	  \xi \left( (1 - \alpha_k(\gamma_{j(k)}^{k})) r_{k,0}^{t_k(\gamma_{j(k)}^{k})}
	  + \alpha_k(\gamma_{j(k)}^{k}) r_{k,1}^{t_k(\gamma_{j(k)}^{k})} \right) \leq \frac{\xi \tilde r}{2} < 0
	\]
	contradicting \eqref{eq : FToZero}. Hence for all but finitely many $k \in K$, without
	loss of generality for all $k \in K$, we have $\gamma_{j(k)}^k S_k^{\bar N} < S_k^{\bar \tau}$.
	
	There exists $\delta > 0$ such that
	\begin{equation} \label{eq : MainEstim}
	\vert Y_k(\gamma) - \hat{Y}_k(\gamma) \vert \leq \frac{\vert \bar{r}^{\bar{\tau}} \vert (1 - \xi) \underline \gamma
	\gamma S_k^{\bar N}}{8 S^{\bar \tau}} \, \forall k \in K,
	\end{equation}
	whenever $\gamma S_k^{\bar N} \leq \delta$.
	By eventually choosing $\delta$ smaller we can assume $\delta \leq S^{\bar \tau} / 2$ and by passing to a subsequence
	if necessary we can also assume that for all $k \in K$ we have
	\begin{equation} \label{eq : FirstAs}
	  2 S_k^{\bar \tau-1} / \underline \gamma \leq \delta < S_k^{\bar \tau} \leq 2 S^{\bar \tau}. 
	\end{equation}
	
	Now let for each $k$ the index $\tilde j(k)$ denote the smallest $j$ with $\gamma_j S_k^{\bar N} \leq \delta$.
	It obviously holds that $\gamma_{\tilde j(k)-1}^{k} S_k^{\bar N} > \delta$ and by \eqref{eq : FirstAs} we obtain
	\[S_k^{\bar \tau-1} \leq \underline \gamma \delta \leq \underline \gamma \gamma_{\tilde j(k)-1}^{k} S_k^{\bar N}
	\leq \gamma_{\tilde j(k)}^{k} S_k^{\bar N} \leq \delta < S_k^{\bar \tau}\]
	implying $t_k(\gamma_{\tilde j(k)}^{k}) = \bar \tau$ and
	\[\alpha_k(\gamma_{\tilde j(k)}^{k}) \geq \frac{\underline{\gamma} \delta - S_k^{\bar \tau -1}}
	  {S_k^{\bar \tau} - S_k^{\bar \tau -1}} \geq \frac{\underline{\gamma} \delta}{4 S^{\bar \tau}}\]
	by \eqref{eq : FirstAs}.

	Taking this into account together with \eqref{eq : MainEstim} and $\gamma_{\tilde j(k)}^{k} S_k^{\bar N} \leq \delta$ we conclude
	\[
	  Y_k(\gamma_{\tilde j(k)}^{k}) - \hat{Y}_k(\gamma_{\tilde j(k)}^{k}) \leq
	  \frac{\vert \bar{r}^{\bar{\tau}} \vert (1 - \xi) \underline{\gamma} \gamma_{\tilde j(k)}^{k} S_k^{\bar N}}{8 S^{\bar \tau}}
	  \leq - (1 - \xi) \frac{\underline{\gamma} \delta}{4 S^{\bar \tau}}r_{k,1}^{\bar \tau}
	  \leq - (1 - \xi) \alpha_k(\gamma_{\tilde j(k)}^{k}) r_{k,1}^{t_k(\gamma_{\tilde j(k)}^{k})}.
	\]
	Now we can proceed as in the proof of Lemma \ref{lem : WellDef} to show that $\tilde j(k)$ fulfills \eqref{eq : NextIterCond}.

	However, this yields $\tilde j(k) \geq j(k)$ by definition of $j(k)$ and hence
	$\gamma_{j(k)}^{k} S_k^{\bar N} \geq \gamma_{\tilde j(k)}^{k} S_k^{\bar N} \geq S_k^{\bar \tau-1}$
	showing $t_k(\gamma_{j(k)}^{k}) = t_k(\gamma_{\tilde j(k)}^{k}) =\bar \tau$. But then we also have
	$\alpha_k(\gamma_{j(k)}^{k}) \geq \alpha_k(\gamma_{\tilde j(k)}^{k}) \geq \frac{\underline{\gamma} \delta}{4 \bar S^{\bar \tau}}$ and from
	\eqref{eq : NextIterCond} we obtain
	\[
	  Y_k(\gamma_{j(k)}^{k}) - Y_k(0) \leq \xi (Z_k(\gamma_{j(k)}^{k}) - Z_k(0)) \leq \xi \alpha_k(\gamma_{j(k)}^{k}) r_{k,1}^{t_k(\gamma_{j(k)}^{k})}
	  \leq \frac{\xi \underline{\gamma} \delta \tilde r}{8 \bar S^{\bar \tau}} < 0
	\]
	contradicting \eqref{eq : FToZero} and so \eqref{eq : RTtoZero} is proved.
	Condition \eqref{eq : STtoZero} now follows from \eqref{eq : RTtoZero} because we conclude from \eqref{eq : RvsSrelat} that
	$\hat Y_k(1) - \hat Y_k(0) \leq - \frac{\underbar C_{B}}{2} \frac{1}{N_k} (S_k^{N_k})^2
	\leq - \frac{\underbar C_{B}}{2} \frac{1}{N_k} \norm{s_k^{N_k}}^2$.
      \end{proof}
      
      Now we are ready to state the main result of this section.
      
      \begin{theorem} \label{The : Mstat}
	Let Assumption \ref{ass : AlgMPCC} be fulfilled. Then every limit point of the sequence of iterates $x_k$
	is at least M-stationary for problem \eqref{eq : genproblem}. 
      \end{theorem}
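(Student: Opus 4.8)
The plan is to fix a limit point $\bar x$, say $x_k \setto{K} \bar x$ for some infinite $K \subset \N$, and to obtain an M-stationary multiplier at $\bar x$ as a limit of M-stationary multipliers of the auxiliary problems. From Theorem \ref{The : FinitAndFinal} each final point $(s_k^{N_k},\delta_k^{N_k})$ is $\mathcal{Q}_M$-stationary, hence M-stationary, for \eqref{eq : deltaprob}, with $\delta_k^{N_k} < \zeta < 1$; from Proposition \ref{pro : ToZeroAtN} we have $\norm{s_k^{N_k}} \to 0$. Thus one of the multipliers $\overline\lambda_k^{N_k}, \underline\lambda_k^{N_k}$, call it $\lambda_k=(\lambda_k^h,\lambda_k^g,\lambda_k^H,\lambda_k^G)$, satisfies the stationarity equation \eqref{eqn : FirstOrder1} together with the inclusion \eqref{eq : MstatLNC}, where the argument of the block for $i\in V$ is the linearized quantity $w_{i,k}:=\delta_k^{N_k}(\Beta_{i,k}^H H_i(x_k),-\Beta_{i,k}^G G_i(x_k))^T + F_i(x_k) + \nabla F_i(x_k) s_k^{N_k}$, and similarly for the $h$- and $g$-blocks. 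By passing to a subsequence I may also assume $N_k\equiv\bar N$, $\Beta_k$ constant, and $\delta_k^{N_k}$ convergent.

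The first, and somewhat delicate, step is to show that $\bar x$ is feasible. For $i\in E$, passing to the limit in $(1-\delta_k^{N_k})h_i(x_k)+\nabla h_i(x_k)s_k^{N_k}=0$, using $\nabla h_i(x_k)s_k^{N_k}\to 0$ and $1-\delta_k^{N_k}>1-\zeta>0$, yields $h_i(\bar x)=0$; the inequalities $g_i(\bar x)\le 0$ follow the same way. For $i\in V$ I would exploit $w_{i,k}\in P$ together with \eqref{eq : DistIfIn} and \eqref{eq : betaeff}: applying \eqref{eq : DistIfIn} twice gives $d(F_i(x_k),P)\le \delta_k^{N_k}\,d(F_i(x_k),P)+\norm{\nabla F_i(x_k)s_k^{N_k}}_1$, hence $(1-\delta_k^{N_k})\,d(F_i(x_k),P)\le\norm{\nabla F_i(x_k)s_k^{N_k}}_1\to 0$, and since $1-\delta_k^{N_k}>1-\zeta$ this forces $d(F_i(\bar x),P)=0$, i.e. $F_i(\bar x)\in P$. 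This simultaneously shows that the correction term has $\ell_1$-norm $\delta_k^{N_k}d(F_i(x_k),P)\to 0$, so $w_{i,k}\setto{K}F_i(\bar x)\in P$; combined with a short case distinction for the $g$-block (if $g_i(\bar x)<0$ then $\Beta_{i,k}^g=0$ eventually by \eqref{eq : DefBet}, while $g_i(\bar x)=0$ makes the corresponding term vanish), the whole linearized argument converges to $\mathcal{F}(\bar x)\in D$.

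Next I would extract the multiplier limit. By Assumption \ref{ass : AlgMPCC}(1) the multipliers are bounded, so after a further subsequence $\lambda_k\setto{K}\bar\lambda$. Since $\norm{B_k}\le\bar C_B$ by Assumption \ref{ass : AlgMPCC}(2) and $s_k^{N_k}\to0$, we have $B_ks_k^{N_k}\to0$; passing to the limit in \eqref{eqn : FirstOrder1} and using continuity of all gradients then gives precisely the stationarity equation \eqref{eq : StatEq} at $\bar x$.

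The final step, which I expect to be the main obstacle, is to transfer the inclusion \eqref{eq : MstatLNC} to the limit, and this is exactly where the \emph{limiting} (rather than regular) normal cone is indispensable. Writing the M-stationarity of the auxiliary problem as $\lambda_k\in N_D(\omega_k)$, where $\omega_k$ denotes the linearized argument, we have $\omega_k\in D$, $\omega_k\to\mathcal{F}(\bar x)\in D$ and $\lambda_k\to\bar\lambda$; the outer-semicontinuity property \eqref{eq : propLimitNC} applied to the product set $D$ then yields $\bar\lambda\in N_D(\mathcal{F}(\bar x))$, which is exactly the characterization \eqref{eq : MstatLNC} of M-stationarity. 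In particular $\bar\lambda_i^F\in N_P(F_i(\bar x))$ for every $i\in V$. Together with \eqref{eq : StatEq} this shows that $\bar x$ is M-stationary for \eqref{eq : genproblem}. The crux is ensuring that $\omega_k$ really converges into $D$ at the right basepoint $\mathcal{F}(\bar x)$ — i.e. the feasibility argument of the second paragraph — because without it the normal-cone inclusion could only be invoked at a point lying outside $P$, where \eqref{eq : propLimitNC} would be inapplicable.
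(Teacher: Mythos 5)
Your proposal is correct and follows essentially the same route as the paper's proof: feasibility of $\bar x$ via the estimate $(1-\delta_k^{N_k})\,d(F_i(x_k),P)\le\norm{\nabla F_i(x_k)s_k^{N_k}}_1$ built on \eqref{eq : DistIfIn} and \eqref{eq : betaeff}, convergence of the linearized arguments to $\mathcal{F}(\bar x)$, passage to the limit in \eqref{eqn : FirstOrder1} using $B_k s_k^{N_k}\to 0$, and transfer of the M-stationarity inclusion via the outer semicontinuity \eqref{eq : propLimitNC} of the limiting normal cone. The only cosmetic difference is that the paper works directly with the fixed multiplier $\underline\lambda_k^{N_k}$ (which always satisfies the M-stationarity inclusion by \eqref{eq : MstatConvexPiece}) rather than selecting one of the two $\mathcal{Q}$-multipliers along a subsequence, but this does not change the argument.
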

      \begin{proof}
	Let $\bar{x}$ denote a limit point of the sequence $x_k$ and let $K$ denote a subsequence such that
	$\lim_{k \setto K \infty} x_k = \bar x$. Further let $\underline \lambda$ be a limit point of the bounded sequence
	$\underline \lambda_k^{N_k}$ and assume without loss of generality that
	$\lim_{k \setto K \infty} \underline \lambda_k^{N_k} = \underline \lambda$.
	First we show feasibility of $\bar{x}$ for the problem \eqref{eq : genproblem} together with
	\begin{equation} \label{eq : FinalMultInNC}
	  \underline \lambda_i^g \geq 0 = \underline \lambda_i^g g_i(\bar x), i \in I \quad \textrm{ and } \quad
	  (\underline \lambda^{H}, \underline \lambda^{G}) \in N_{P^{\vert V \vert}}(F(\bar x)).
	\end{equation}
	
	Consider $i \in I$. For all $k$ it holds that
	\[0 \geq \left( (1 - \Beta_{i,k}^{g} \delta_k^{N_k}) g_i(x_k) + \nabla g_i(x_k) s_k^{N_k} \right) \perp
	\underline \lambda_{i,k}^{g,N_k} \geq 0.\]
	Since $0 \leq \delta_k^{N_k} \leq \zeta$, $\Beta_{i,k}^{g} \in \{0,1\}$ we have
	$1 \geq (1 - \Beta_{i,k}^{g} \delta_k^{N_k}) \geq 1 - \zeta$ and together with $s_k^{N_k} \to 0$ by Proposition \ref{pro : ToZeroAtN}
	we conclude
	\[ 0 \geq \limsup_{k \setto K \infty} \left( g_i(x_k) + \frac{\nabla g_i(x_k) s_k^{N_k}}{(1 - \Beta_{i,k}^{g} \delta_k^{N_k})} \right)
	= g_i(\bar x),\]
	$\underline \lambda_i^g \geq 0$ and
	\[0 = \lim_{k \setto K \infty} \underline \lambda_{i,k}^{g,N_k} \left( g_i(x_k) + \frac{\nabla g_i(x_k) s_k^{N_k}}
	{(1 - \Beta_{i,k}^{g} \delta_k^{N_k})} \right)
	= \underline \lambda_i^g g_i(\bar x).\]
	Hence $\underline \lambda_i^g \geq 0 = \underline \lambda_i^g g_i(\bar x)$.
	Similar arguments show that for every $i \in E$ we have
	\[ 0 = \lim_{k \setto K \infty} \left( h_i(x_k) + \frac{\nabla h_i(x_k) s_k^{N_k}}{(1 - \delta_k^{N_k})} \right)
	= h_i(\bar x).\]
	
	Finally consider $i \in V$. Taking into account \eqref{eq : DistIfIn}, \eqref{eq : betaeff} and $\delta_k^{N_k} \leq \zeta$ we obtain
	\begin{eqnarray*}
	  d(F_i(x_k),P) & \leq & \norm{ \delta_k^{N_k} (\Beta_{i,k}^H H_i(x_k), - \Beta_{i,k}^G G_i(x_k))^T + \nabla F_i(x_k) s_k^{N_k}}_1 \\
	  & \leq & \zeta d(F_i(x_k),P) + \norm{\nabla F_i(x_k) s_k^{N_k}}_1.
	\end{eqnarray*}
	Hence, $\nabla F_i(x_k) s_k^{N_k} \to 0$ by Proposition \ref{pro : ToZeroAtN} implies
	\[(1-\zeta)d(F_i(\bar x),P) = \lim_{k \setto K \infty} (1-\zeta) d(F_i(x_k),P) \leq
	\lim_{k \setto K \infty} \norm{\nabla F_i(x_k) s_k^{N_k}}_1 = 0,\]
	showing the feasibility of $\bar x$. Moreover, the previous arguments also imply
	\begin{equation} \label{eq : AuxFtoF}
	  \tilde F_i(x_k,s_k^{N_k},\delta_k^{N_k}) := \delta_k^{N_k} (\Beta_{i,k}^H H_i(x_k), - \Beta_{i,k}^G G_i(x_k))^T
	  + F_i(x_k) + \nabla F_i(x_k) s_k^{N_k} \setto K F_i(\bar x).
	\end{equation}
	
	Taking into account \eqref{eq : MstatLNC}, the fact that $\underline \lambda_k^{N_k}$ fulfills M-stationarity conditions
	at $(s_k^{N_k},\delta_k^{N_k})$ for \eqref{eq : deltaprob} yields
	\[(\underline \lambda_{k}^{H,N_k}, \underline \lambda_{k}^{G,N_k}) \in N_{P^{\vert V \vert}}(\tilde F(x_k,s_k^{N_k},\delta_k^{N_k})).\]
	However, this together with $(\underline \lambda_{k}^{H,N_k}, \underline \lambda_{k}^{G,N_k}) \setto K
	(\underline \lambda^{H}, \underline \lambda^{G})$, \eqref{eq : AuxFtoF}, and \eqref{eq : propLimitNC} yield
	$(\underline \lambda^{H}, \underline \lambda^{G}) \in N_{P^{\vert V \vert}}(F(\bar x))$
	and consequently \eqref{eq : FinalMultInNC} follows.
	
	Moreover, by first order optimality condition we have
	\[
	  B_k s_k^{N_k} + \nabla f(x_k)^T + \sum \limits_{i \in E} \underline \lambda_{i,k}^{h,N_k} \nabla h_i(x_k)^T
	  + \sum \limits_{i \in I} \underline \lambda_{i,k}^{g,N_k} \nabla g_i(x_k)^T
	  + \sum \limits_{i \in V} \nabla F_i(x_k)^T \underline \lambda_{i,k}^{F,N_k} = 0
	\]
	for each $k$ and by passing to a limit and by taking into account that $B_ks_k^{N_k} \to 0$
	by Proposition \ref{pro : ToZeroAtN} we obtain
	\[
	  \nabla f(\bar{x})^T + \sum \limits_{i \in E} \underline\lambda_{i}^{h} \nabla h_i(\bar{x})^T
	  + \sum \limits_{i \in I} \underline\lambda_{i}^{g} \nabla g_i(\bar{x})^T
	  + \sum \limits_{i \in V} \nabla F_i(\bar{x})^T \underline \lambda_{i}^{F} = 0.\]
	Hence, invoking \eqref{eq : MstatLNC} again, this together with the feasibility of $\bar x$ and \eqref{eq : FinalMultInNC}
	implies M-stationarity of $\bar x$ and the proof is complete.
      \end{proof}
      
\section{The extended SQP algorithm for MPVC}
      
      In this section we investigate what can be done in order to secure $\mathcal{Q}_M$-stationarity of the limit points.
      First, note that to prove M-stationarity of the limit points in Theorem \ref{The : Mstat} we only used that
      $(\underline \lambda_{k}^{H,N_k}, \underline \lambda_{k}^{G,N_k}) \in N_{P^{\vert V \vert}}(\tilde F(x_k,s_k^{N_k},\delta_k^{N_k}))$,
      i.e. it is sufficient to exploit only the M-stationarity of the solutions of auxiliary problems.
      Further, recalling the comments after Lemma \ref{Lem : QstatQPpiece}, the solution $(s,\delta)$ of $QP(\rho, I^{1}(s,\delta) \cup I^{00}(s,\delta))$
      is M-stationary for the auxiliary problem. Thus, in Algorithm \ref{AlgSol} for solving the auxiliary problem,
      it is sufficient to consider only the last problem of the four problems \eqref{eqn : b1b2Stat},\eqref{eqn : I00EmptyStat}.
      Moreover, definition of limiting normal cone \eqref{eq : LimitNCdef} reveals that, in general, the limiting process abolishes any
      stationarity stronger that M-stationarity, even S-stationarity.
      
      Nevertheless, in practical situations it is likely that some assumption, securing that a stronger stationarity
      will be preserved in the limiting process, may be fulfilled. E.g., let $\bar x$ be a limit point of $x_k$.
      If we assume that for all $k$ sufficiently large it holds that
      $I^{00}(\bar x) = I^{00}(s_k^{N_k},\delta_k^{N_k})$, then $\bar x$ is at least $\mathcal{Q}_M$-stationary for \eqref{eq : genproblem}.
      This follows easily, since now for all $i \in I^{00}(\bar x)$ it holds that $\underline \lambda_{i,k}^{G,N_k} = 0$,
      $\overline \lambda_{i,k}^{H,N_k}, \overline \lambda_{i,k}^{G,N_k} \geq 0$ and consequently
      \[\underline \lambda_i^{G} = \lim_{k \to \infty} \underline \lambda_{i,k}^{G,N_k} = 0, \quad
      \overline \lambda_i^{H} = \lim_{k \to \infty} \overline \lambda_{i,k}^{H,N_k} \geq 0, \quad
      \overline \lambda_i^{G} = \lim_{k \to \infty} \overline \lambda_{i,k}^{G,N_k} \geq 0.\]
      
      This observation suggests that to obtain a stronger stationarity of a limit point,
      the key is to correctly identify the bi-active index set at the limit point
      and it serves as a motivation for the extended version of our SQP method.
      Before we can discuss the extended version, we summarize some preliminary results.

    \subsection{Preliminary results}
      
      Let $a: \R^n \rightarrow \R^p$ and $b: \R^n \rightarrow \R^q$ be continuously differentiable.
      Given a vector $x \in \R^n$ we define the linear problem
      \begin{equation} \label{eq : AuxdProb}
      \begin{array}{lrl}
	LP(x) & \min\limits_{d \in \mathbb{R}^{n}} & \nabla f(x) d \\
	& \textrm{subject to } & \phantom{(b(x))^- +} \nabla a(x) d = 0, \\
	&& (b(x))^- + \nabla b(x) d \leq 0, \\
	&& -1 \leq d \leq 1.
      \end{array}
      \end{equation}
      Note that $d=0$ is always feasible for this problem. Next we define a set $A$ by
      \begin{equation} \label{eq : FeasSetDef}
	A := \{x \in \R^n \mv a(x) = 0, b(x) \leq 0\}.
      \end{equation}
	Let $\bar x \in A$ and recall that the Mangasarian-Fromovitz constraint qualification (MFCQ) holds at $\bar x$
	if the matrix $\nabla a(\bar x)$ has full row rank and there exists a vector $d \in \R^n$ such that
	\[\nabla a(\bar x) d = 0, \quad \nabla b_i(\bar x) d < 0, \, i \in \mathcal I(\bar x) := \{i \in \{1, \ldots, q\} \mv b_i(\bar x) = 0\}.\]
	Moreover, for a matrix $M$ we denote by $\norm{M}_p$ the norm given by
	\begin{equation} \label{eq : MatNormDef}
	  \norm{M}_p := \sup \{ \norm{M u}_p \mv \norm{u}_{\infty} \leq 1\}
	\end{equation}
	and we also omit the index $p$ in case $p = 2$.
          
      \begin{lemma} \label{lem : LP1}
	Let $\bar x \in A$, let assume that MFCQ holds at $\bar x$ and let $\bar d$ denote the solution of $LP(\bar x)$.
	Then for every $\epsilon > 0$ there exists $\delta > 0$ such that if $\norm{x - \bar x} \leq \delta$ then
	\begin{equation}
	  \nabla f(x) d \leq \nabla f(\bar x) \bar d + \epsilon,
	\end{equation}
	where $d$ denotes the solution of $LP(x)$.
      \end{lemma}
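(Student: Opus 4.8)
The statement asserts upper semicontinuity of the optimal value $v(x):=\nabla f(x)\,d$ of $LP(x)$ at $\bar x$. My plan is to reduce it to a feasibility construction. First I would note that $d=0$ is always feasible and the box $-1\le d\le 1$ makes the feasible set compact, so $LP(x)$ attains its minimum for every $x$; hence it suffices to produce, for $x$ close to $\bar x$, a feasible point $d(x)$ of $LP(x)$ with $d(x)\to\bar d$ as $x\to\bar x$. Indeed, then $\nabla f(x)\,d\le\nabla f(x)\,d(x)\to\nabla f(\bar x)\bar d$ by continuity of $\nabla f$, and the desired $\delta$ follows.

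The heart of the argument is to find a single direction $w$ that strictly relieves \emph{all} constraints active at $\bar d$. Since $\bar x\in A$ we have $(b(\bar x))^-=b(\bar x)$; I would set $J:=\{i:b_i(\bar x)+\nabla b_i(\bar x)\bar d=0\}$ for the active inequalities and $J^+:=\{j:\bar d_j=1\}$, $J^-:=\{j:\bar d_j=-1\}$ for the active bounds, and seek $w$ with $\nabla a(\bar x)w=0$, $\nabla b_i(\bar x)w<0$ for $i\in J$, $w_j<0$ for $j\in J^+$ and $w_j>0$ for $j\in J^-$. To get $w$ I argue by contradiction via Motzkin's transposition theorem: if no such $w$ exists there are multipliers $\mu_i\ge0\ (i\in J)$, $\nu_j\ge0\ (j\in J^+)$, $\kappa_j\ge0\ (j\in J^-)$ with $(\mu,\nu,\kappa)\ne0$, and some $\lambda$, such that
$$\sum_{i\in J}\mu_i\nabla b_i(\bar x)^T+\sum_{j\in J^+}\nu_j e_j-\sum_{j\in J^-}\kappa_j e_j+\nabla a(\bar x)^T\lambda=0.$$
Taking the inner product with $\bar d$ and using $\nabla a(\bar x)\bar d=0$, $\bar d_j=1$ on $J^+$, $\bar d_j=-1$ on $J^-$, together with $\nabla b_i(\bar x)\bar d=-b_i(\bar x)\ge0$ for $i\in J$, produces a sum of nonnegative terms equal to $0$; hence $\nu=\kappa=0$ and $\mu_i=0$ whenever $b_i(\bar x)<0$. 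The surviving indices lie in $J\cap\mathcal I(\bar x)$, so $\sum_{i\in J\cap\mathcal I(\bar x)}\mu_i\nabla b_i(\bar x)^T+\nabla a(\bar x)^T\lambda=0$ with $\mu\ne0$. Pairing with the MFCQ vector $\hat d$ (which satisfies $\nabla a(\bar x)\hat d=0$ and $\nabla b_i(\bar x)\hat d<0$ on $\mathcal I(\bar x)$) then yields a strictly negative number equal to $0$, a contradiction. Thus $w$ exists.

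With $w$ in hand I would set $d(x):=\bar d+t\,w+r(x)$, where $t:=K\norm{x-\bar x}$ and $r(x)$ is the minimum-norm solution of $\nabla a(x)\,r=-\nabla a(x)(\bar d+t\,w)$. This $r(x)$ exists because MFCQ makes $\nabla a(\bar x)$—hence $\nabla a(x)$ for $x$ near $\bar x$—of full row rank with uniformly bounded right inverse, and since $\nabla a(\bar x)\bar d=0=\nabla a(\bar x)w$ one gets $\norm{r(x)}=O(\norm{x-\bar x})$, so that $\nabla a(x)d(x)=0$ and $d(x)\to\bar d$. For the inequalities and bounds I would observe that the perturbation of each constraint between $\bar x$ and $x$, evaluated at $\bar d$, is $O(\norm{x-\bar x})$ (continuity of $b$, $(b(\cdot))^-$ and $\nabla b$), while the step $t\,w$ creates slack of order $t=K\norm{x-\bar x}$ in every constraint of $J$, $J^+$, $J^-$, and inactive constraints keep a fixed positive slack. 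Choosing $K$ large enough that this slack dominates both the constraint perturbation and the effect $O(\norm{r(x)})$ of the correction, I would conclude that $d(x)$ is feasible for all $x$ near $\bar x$, completing the reduction.

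The delicate point—and the main obstacle—is the existence of $w$: it must relieve the inequalities active at the LP optimum $\bar d$ (which need \emph{not} be active at $\bar x$) and the active bounds, whereas MFCQ only controls the constraints in $\mathcal I(\bar x)$. The inner-product-with-$\bar d$ step is exactly what converts LP feasibility into the vanishing of the multipliers attached to the ``extra'' active inequalities and to the bounds, reducing the obstruction to $\mathcal I(\bar x)$ where MFCQ applies. The remaining work—ensuring the equality correction $r(x)$ neither destroys the created slack nor leaves the box—is routine once $K$ has been fixed large.
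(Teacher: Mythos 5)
Your overall strategy --- reduce upper semicontinuity of the optimal value to exhibiting a feasible point of $LP(x)$ that converges to $\bar d$ --- is exactly the paper's, but you implement the feasibility construction quite differently. The paper simply invokes Robinson's stability theorem \cite{Ro76}: under MFCQ it supplies $\hat d$ with $\nabla a(x)\hat d=0$, $(b(x))^-+\nabla b(x)\hat d\le 0$ and $\norm{\bar d-\hat d}\le\kappa\max\{\norm{\nabla a(x)\bar d},\norm{((b(x))^-+\nabla b(x)\bar d)^+}\}=:\nu$, and then recovers the box constraint by the one-line rescaling $\tilde d:=\hat d/(1+\nu)$, which stays feasible because the linearized system is star-shaped about $0$ (as $(b(x))^-\le 0$). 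You instead build the feasible point by hand: a Motzkin alternative combined with the inner product against $\bar d$ produces a direction $w$ strictly relieving all constraints active at $\bar d$ (including the active bounds), and a correction $r(x)$ restores the equalities. Your existence argument for $w$ is correct and is the genuinely nontrivial content --- it is in effect a self-contained proof of the stability property that the paper outsources to Robinson --- at the price of considerably more work, in particular for the box constraints, which the paper dispatches by rescaling toward the always-feasible point $d=0$.

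There is one quantitative slip you should repair. The data of $LP(x)$ are only continuously differentiable (the paper assumes $a,b\in C^1$, not Lipschitz gradients), so $\nabla a(x)-\nabla a(\bar x)$ and $\nabla b(x)-\nabla b(\bar x)$ are $o(1)$ but need not be $O(\norm{x-\bar x})$; consequently the perturbation of the constraints evaluated at $\bar d$ and the size of your correction $\norm{r(x)}$ are not $O(\norm{x-\bar x})$, and no fixed $K$ makes the slack $K\norm{x-\bar x}$ created by $tw$ dominate them. The fix is immediate: take $t:=K\eta(x)$ with $\eta(x):=\max\{\norm{\nabla a(x)-\nabla a(\bar x)},\norm{\nabla b(x)-\nabla b(\bar x)},\norm{b(x)-b(\bar x)}\}\to 0$; then $t\to 0$, $\norm{r(x)}=O(\eta(x))$, and for $K$ large enough the created slack dominates both the data perturbation and the correction, so $d(x)\to\bar d$ still holds. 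Since the lemma only needs convergence of $d(x)$ to $\bar d$ and not a rate, the conclusion is unaffected.
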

      \begin{proof}
	The classical Robinson's result (c.f. \cite[Corollary 1, Theorem 3]{Ro76}), together with MFCQ at $\bar x$,
	yield the existence of $\kappa > 0$ and $\tilde \delta > 0$ such that for every $x$ with $\norm{x - \bar x} \leq \tilde \delta$
	there exists $\hat d$ with $\nabla a(x) \hat d = 0$, $(b(x))^- + \nabla b(x) \hat d \leq 0$ and
	\[
	  \norm{\bar d - \hat d} \leq \kappa \max \{ \norm{ \nabla a(x) \bar d }, \norm{ ( (b(x))^- + \nabla b(x) \bar d )^+ } \}
	  =: \nu.
	\]
	Since $\norm{\hat d}_\infty \leq \norm{\hat d - \bar d + \bar d}_\infty \leq 1 + \nu$,
	by setting $\tilde d := \hat d /(1 + \nu)$ we obtain that $\tilde d$ is feasible for $LP(x)$ and
	\[\norm{\bar d - \tilde d} \leq \frac{1}{1 + \nu} \norm{\bar d - \hat d + \nu \bar d} \leq \frac{ (1 + \sqrt n) \nu}{1 + \nu}
	\leq (1 + \sqrt n) \nu.\]
	
	Thus, taking into account $\nabla a(\bar x) \bar d = 0$, $(b(\bar x))^- + \nabla b(\bar x) \bar d \leq 0$
	and $\norm{\bar d}_{\infty} \leq 1$, we obtain
	\[
	\norm{\bar d - \tilde d} \leq (1 + \sqrt n) \kappa \max \{ \norm{ \nabla a(x) - \nabla a(\bar x) },
	\norm{b(x) - b(\bar x)} + \norm{\nabla b(x) - \nabla b(\bar x)} \}.
	\]
	Hence, given $\epsilon > 0$, by continuity of objective and constraint functions as well as their derivatives at $\bar x$ 
	we can define $\delta \leq \tilde \delta$ such that for all $x$ with $\norm{x - \bar x} \leq \delta$ it holds that
	\[\norm{\nabla f(x) - \nabla f(\bar x)}_{1}, \ \norm{\nabla f(x)} \norm{\bar d - \tilde d} \ \leq \ \epsilon / 2.\]
	Consequently, we obtain
	\[
	  \nabla f (x) \tilde d \leq \norm{\nabla f(x)} \norm{\tilde d - \bar d} +
	\norm{\nabla f(x) - \nabla f(\bar x)}_{1} \norm{\bar d}_{\infty} + \nabla f(\bar x) \bar d \leq \nabla f(\bar x) \bar d + \epsilon
	\]
	and since $\nabla f (x) d \leq \nabla f (x) \tilde d$ by feasibility of $\tilde d$ for $LP(x)$, the claim is proved.
      \end{proof}

      \begin{lemma} \label{lem : LP2}      
        Let $\nu \in (0,1)$ be a given constant and
        for a vector of positive parameters $\omega = (\omega^{\mathcal E},\omega^{\mathcal I})$ let us define the following function
      \begin{equation} \label{eq : varphiDef}
	\varphi(x) := f(x) + \sum_{i \in \{1,\ldots,p\}} \omega_i^{\mathcal E} \vert a_i(x) \vert
	+ \sum_{i \in \{1,\ldots,q\}} \omega_i^{\mathcal I} (b_i(x))^+.      
      \end{equation}
	Further assume that there exist $\epsilon > 0$ and a compact set $C$ such that for all $x \in C$
        it holds that $\nabla f(x) d \leq - \epsilon$, where $d$ denotes the solution of $LP(x)$.
        Then there exists $\tilde \alpha > 0$ such that
	\begin{equation} \label{eq : VarphiAlpha}
	  \varphi(x + \alpha d) - \varphi(x) \leq \nu \alpha \nabla f(x) d
	\end{equation}
	holds for all $x \in C$ and every $\alpha \in [0,\tilde \alpha]$.
      \end{lemma}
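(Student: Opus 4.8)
The plan is to derive a uniform first-order (Armijo-type) estimate
\[\varphi(x + \alpha d) - \varphi(x) \leq \alpha \nabla f(x) d + o(\alpha),\]
valid simultaneously for all $x \in C$, and then to absorb the $o(\alpha)$ remainder into the strictly negative main term $\alpha\nabla f(x) d \leq -\alpha\epsilon$ by taking $\alpha$ small enough.

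First I would secure the uniformity, which I expect to be the main obstacle: the direction $d = d(x)$ solving $LP(x)$ need not depend continuously on $x$, so a pointwise Taylor expansion would not suffice. The crucial fact is that the box constraint $-1 \leq d \leq 1$ forces $\norm{d}_\infty \leq 1$ and hence $\norm{d} \leq \sqrt n$, so every point $x + \alpha d$ with $x \in C$ and $\alpha \in [0,1]$ lies in the compact enlargement $\hat C := \{ y \in \R^n \mv \inf_{x \in C}\norm{y - x} \leq \sqrt n\}$. On $\hat C$ the gradients $\nabla f, \nabla a_i, \nabla b_i$ are uniformly continuous, so by the fundamental theorem of calculus the remainder $f(x + \alpha d) - f(x) - \alpha\nabla f(x) d$ is bounded in modulus by $\alpha\sqrt n \sup_{t \in [0,\alpha]} \norm{\nabla f(x + t d) - \nabla f(x)}$, a quantity that is $o(\alpha)$ uniformly in $x \in C$ since the supremum vanishes as $\alpha \downarrow 0$ by uniform continuity on $\hat C$. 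The analogous estimates hold for each $a_i$ and $b_i$.

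Next I would treat the three groups of terms of $\varphi$ separately, in each case exploiting the feasibility of $d$ for $LP(x)$. For $f$ the uniform expansion above gives directly $f(x + \alpha d) - f(x) \leq \alpha\nabla f(x) d + o(\alpha)$. For the equality terms, feasibility forces $\nabla a_i(x) d = 0$, so $a_i(x + \alpha d) - a_i(x) = o(\alpha)$ and the $1$-Lipschitz property of $\vert \cdot \vert$ yields $\vert a_i(x + \alpha d)\vert - \vert a_i(x)\vert \leq o(\alpha)$. For the inequality terms, feasibility gives $\nabla b_i(x) d \leq -(b_i(x))^-$, whence $b_i(x + \alpha d) \leq b_i(x) - \alpha (b_i(x))^- + o(\alpha)$; here the key algebraic observation is that $(b_i(x) - \alpha(b_i(x))^-)^+ = (b_i(x))^+$ for every $\alpha \in [0,1]$, so the monotonicity and $1$-Lipschitz property of $(\cdot)^+$ give $(b_i(x + \alpha d))^+ - (b_i(x))^+ \leq o(\alpha)$. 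Summing these three bounds weighted by the positive parameters $\omega_i^{\mathcal E}, \omega_i^{\mathcal I}$ produces the desired uniform estimate.

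Finally I would close the argument. The uniformity means there is a function $\eta(\alpha) \to 0$ as $\alpha \downarrow 0$ with $\varphi(x + \alpha d) - \varphi(x) \leq \alpha\nabla f(x) d + \alpha\eta(\alpha)$ for all $x \in C$. Since $\nabla f(x) d \leq -\epsilon$, I would pick $\tilde\alpha > 0$ so small that $\eta(\alpha) \leq (1-\nu)\epsilon$ for all $\alpha \in [0,\tilde\alpha]$; then $\alpha\eta(\alpha) \leq (1-\nu)\alpha\epsilon \leq -(1-\nu)\alpha\nabla f(x) d$, which rearranges to $\varphi(x + \alpha d) - \varphi(x) \leq \nu\alpha\nabla f(x) d$ for all $x \in C$ and $\alpha \in [0,\tilde\alpha]$, as claimed.
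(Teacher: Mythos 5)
Your proposal is correct and follows essentially the same route as the paper: a first-order expansion made uniform over $x \in C$ via uniform continuity of the gradients on a compact enlargement, the feasibility relations $\nabla a(x)d = 0$ and $(b(x))^- + \nabla b(x)d \leq 0$ to kill the constraint terms up to $o(\alpha)$ (your identity $(b_i(x) - \alpha(b_i(x))^-)^+ = (b_i(x))^+$ plays the same role as the paper's convex-combination bound $(1-\alpha)(b(x))^- + \alpha((b(x))^- + \nabla b(x)d) \leq 0$), and finally absorption of the remainder into $(1-\nu)\alpha\epsilon$. No gaps.
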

      \begin{proof}
	Definition of $\varphi$, together with $u^+-v^+ \leq (u - v^+)^+$ for $u,v \in \R$, yield
	\begin{equation} \label{eq : VarPhiBound}
	  \varphi(x + \alpha d) - \varphi(x) \leq f(x + \alpha d) - f(x) +
	  \norm{\omega}_{\infty} ( \norm{a(x + \alpha d) - a(x)}_1 + \norm{(b(x + \alpha d) - (b(x))^+)^+}_1).
	\end{equation}
	By uniform continuity of the derivatives of constraint functions and objective function on compact sets,
	it follows that there exists $\tilde \alpha > 0$ such that for all $x \in C$ and every $h$ with $\norm{h}_{\infty} \leq \tilde \alpha$ we have
	\begin{equation} \label{eq : ContDeriv}
	  \norm{\nabla f(x+h) - \nabla f(x)}_{1}, \, \, \norm{\omega}_{\infty} ( \norm{\nabla a(x + h) - \nabla a(x)}_1 +
	  \norm{\nabla b(x + h) - \nabla b(x)}_1 ) \leq \frac{1-\nu}{2} \epsilon.
	\end{equation}
	
	Hence, for all $x \in C$ and every $\alpha \in [0,\tilde \alpha]$ we obtain
	\begin{eqnarray*}
	  f(x + \alpha d) - f(x) & = & \nu \alpha \nabla f(x) d + (1 - \nu) \alpha \nabla f(x) d +
	  \int_{0}^{1} (\nabla f(x + t \alpha d) - \nabla f(x)) \alpha d \mathrm{d} t \\
	  & \leq & \nu \alpha \nabla f(x) d - (1 - \nu) \alpha \epsilon + \frac{1-\nu}{2} \alpha \epsilon =
	  \nu \alpha \nabla f(x) d - \frac{1-\nu}{2} \alpha \epsilon.
	\end{eqnarray*}
	On the other hand, taking into account $\nabla a(x) d = 0$, $\norm{d}_{\infty} \leq 1$, \eqref{eq : ContDeriv} and
	\[(b(x))^- + \alpha \nabla b(x) d = (1- \alpha)(b(x))^- + \alpha ((b(x))^- + \nabla b(x) d) \leq 0\]
	we similarly obtain for all $x \in C$ and every $\alpha \in [0,\tilde \alpha]$
	{\setlength\arraycolsep{2pt}
	\begin{eqnarray*}
	  \lefteqn{\norm{\omega}_{\infty} ( \norm{a(x + \alpha d) - a(x)}_1 + \norm{(b(x + \alpha d) - (b(x))^+)^+}_1)} \\
	  & \leq & \norm{\omega}_{\infty} \Big( \norm{\smallint_{0}^{1} (\nabla a(x + t \alpha d) - \nabla a(x)) \alpha d \mathrm{d} t}_1
	  + \norm{\smallint_{0}^{1} (\nabla b(x + t \alpha d) - \nabla b(x)) \alpha d \mathrm{d} t}_1 \Big)
	  \leq \frac{1-\nu}{2} \alpha \epsilon.
	\end{eqnarray*}}
	Consequently, \eqref{eq : VarphiAlpha} follows from \eqref{eq : VarPhiBound} and the proof is complete.
      \end{proof}
      
      \subsection{The extended version of Algorithm \ref{AlgMPCC}}

      For every vector $x \in \R^n$ and every partition $(W_1 , W_2) \in \mathcal{P}(V)$
      we define the linear problem
      \begin{equation} \label{eq : dProb}
      \begin{array}{lrll}
	LP(x,W_1) & \min\limits_{d \in \mathbb{R}^{n}} & \nabla f(x) d & \\
	& \textrm{subject to } & \phantom{(g_i(x))^- + } \nabla h_i(x) d = 0 & i \in E, \\
	&& (g_i(x))^- + \nabla g_i(x) d \leq 0 & i \in I, \\
	&& \phantom{(F_i(x))^- + } \nabla F_i(x) d \in P^1 & i \in W_1, \\
	&& (F_i(x))^- + \nabla F_i(x) d \in P^2 & i \in W_2, \\
	&& -1 \leq d \leq 1. &
      \end{array}
      \end{equation}
      Note that $d=0$ is always feasible for this problem and that the problem $LP(x,W_1)$ coincides with the problem $LP(x)$ with $a,b$ given by
      \begin{equation} \label{eq : abDef}
	a := (h_i(x), i \in E, -H_i(x), i \in W_1)^T, \, b := (g_i(x), i \in I, -H_i(x), i \in W_2, G_i(x), i \in W_2)^T.
      \end{equation}      
      
      The following proposition provides the motivation for introducing the problem $LP(x,W_1)$.

      \begin{proposition} \label{Pro : SolLP}
	Let $\bar x$ be feasible for \eqref{eq : genproblem}. Then $\bar x$ is $\mathcal{Q}$-stationary with respect to
		$(\beta^1,\beta^2) \in \mathcal{P}(I^{00}(\bar x))$ if and only if the solutions $\bar d^1$ and $\bar d^2$ of the problems
		$LP(\bar x,I^{0+}(\bar x) \cup \beta^1)$ and $LP(\bar x, I^{0+}(\bar x) \cup \beta^2)$ fulfill
	       \begin{equation} \label{eq : LinSolZero}
		\min \{ \nabla f (\bar x) \bar d^1 , \nabla f (\bar x) \bar d^2 \} = 0.
	       \end{equation}
      \end{proposition}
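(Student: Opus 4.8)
The plan is to prove both implications simultaneously by showing that, for each of the two partition orderings, the point $d=0$ solves the corresponding linear program \emph{if and only if} the associated $\mathcal{Q}$-stationarity multiplier exists, and then reading off the equivalence from the sign of the two optimal values.

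First I would record the essential elementary observation that $d=0$ is feasible for both $LP(\bar x, I^{0+}(\bar x)\cup\beta^1)$ and $LP(\bar x, I^{0+}(\bar x)\cup\beta^2)$, so their optimal values satisfy $\nabla f(\bar x)\bar d^1\le 0$ and $\nabla f(\bar x)\bar d^2\le 0$. Since both quantities are nonpositive, condition \eqref{eq : LinSolZero} is equivalent to $\nabla f(\bar x)\bar d^1=\nabla f(\bar x)\bar d^2=0$, that is, to $d=0$ being a minimizer of each program. It therefore remains to characterize optimality of $d=0$.

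Next, fixing $W_1=I^{0+}(\bar x)\cup\beta^1$ and $W_2=V\setminus W_1=I^{0-}(\bar x)\cup I^{+0}(\bar x)\cup I^{+-}(\bar x)\cup\beta^2$, I would write the KKT conditions of $LP(\bar x,W_1)$ at $d=0$. As the feasible region is polyhedral, for a linear program these conditions are both necessary and sufficient for optimality, so no constraint qualification is needed. At $d=0$ the box constraints $-1\le d\le 1$ are inactive and contribute nothing. Using feasibility of $\bar x$ (so $h_i(\bar x)=0$, $g_i(\bar x)\le 0$, $F_i(\bar x)\in P$), I would identify the active inequalities: the $g_i$-constraint is active exactly for $i\in I^g(\bar x)$; for $i\in W_1$ the membership $\nabla F_i(\bar x)d\in P^1$ reduces to the single equality $\nabla H_i(\bar x)d=0$ with $\nabla G_i(\bar x)d$ unconstrained; and for $i\in W_2$ the membership $(F_i(\bar x))^-+\nabla F_i(\bar x)d\in P^2$ splits into two inequalities whose activity at $d=0$ is governed by the signs of $-H_i(\bar x)$ and $G_i(\bar x)$. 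The heart of the argument is then to match, component by component, the resulting multipliers with the normal-cone description of $\mathcal{Q}$-stationarity: setting $\lambda_i^h,\lambda_i^g$ equal to the multipliers of the $h_i$- and $g_i$-constraints and letting $\lambda_i^H$ (resp.\ $\lambda_i^G$) be the multiplier attached to the first row $-\nabla H_i$ (resp.\ second row $\nabla G_i$) of $\nabla F_i$, I would verify that the KKT stationarity equation is exactly \eqref{eq : StatEq} and that the sign and complementarity relations reproduce $(\lambda_i^H,\lambda_i^G)\in\nu_i^{\beta^1,\beta^2}(\bar x)$ for every $i\in V$. Concretely: for $i\in W_1$ one obtains $\lambda_i^H$ free and $\lambda_i^G=0$, which is $N_{P^1}(F_i(\bar x))=\R\times\{0\}$; for $i\in\beta^2$ both $P^2$-rows are active, giving $\lambda_i^H,\lambda_i^G\ge 0$, which is $N_{P^2}(F_i(\bar x))=\R_+\times\R_+$; and for $i\in I^{0-}(\bar x)$, $I^{+0}(\bar x)$, $I^{+-}(\bar x)$ the activity pattern dictated by the signs of $H_i(\bar x),G_i(\bar x)$ reproduces $N_P(F_i(\bar x))$ exactly as tabulated in the excerpt. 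This shows that $d=0$ solves $LP(\bar x,I^{0+}(\bar x)\cup\beta^1)$ precisely when a multiplier satisfying \eqref{eqn : QstatLNC1} exists, and, by the identical computation with the roles of $\beta^1$ and $\beta^2$ interchanged, that $d=0$ solves $LP(\bar x,I^{0+}(\bar x)\cup\beta^2)$ precisely when a multiplier satisfying \eqref{eqn : QstatLNC2} exists. Combining this with the first step yields the claimed equivalence.

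I expect the main obstacle to be the careful bookkeeping in this matching step: one must determine correctly which of the two $P^2$-rows is active for each class of indices, in particular getting the signs right for $I^{0-}(\bar x)$, $I^{+0}(\bar x)$ and $I^{+-}(\bar x)$, where the point $(F_i(\bar x))^-$ at $d=0$ lies on a specific face of $P^2$ or in its interior, and then confirm that the attached multiplier sign conditions coincide verbatim with $N_P(F_i(\bar x))$, $N_{P^1}(F_i(\bar x))$ and $N_{P^2}(F_i(\bar x))$. Everything else is a direct application of linear programming duality together with the normal-cone reformulation \eqref{eqn : QstatLNC1}--\eqref{eqn : QstatLNC2} already established above.
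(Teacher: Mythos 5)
Your proof is correct, and it rests on the same underlying mechanism as the paper's---linear programming duality for the two programs, with the dual multipliers identified as the $\mathcal{Q}$-stationarity multipliers---but the execution differs in a way worth noting. The paper first passes to the auxiliary programs $\tilde{LP}^1,\tilde{LP}^2$ obtained by deleting the box constraint $-1\le d\le 1$, writes their explicit duals \eqref{eq : DualdProb}, and argues in both directions via weak and strong duality; this forces it to relate the optimal values of the bounded and unbounded programs (the step asserting that \eqref{eq : LinSolZero} implies $\min\{\nabla f(\bar x)\tilde d^1,\nabla f(\bar x)\tilde d^2\}=0$ tacitly uses that a feasible direction with negative cost can be scaled into the box). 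You avoid the auxiliary programs altogether by observing that the box constraint is inactive at $d=0$ and characterizing optimality of $d=0$ directly through the KKT conditions of the original $LP(\bar x,W_1)$, which for a problem with affine constraints are necessary and sufficient without any constraint qualification. Your component-by-component matching of active constraints and multiplier signs with $\nu_i^{\beta^1,\beta^2}(\bar x)$ in \eqref{eqn : QstatLNC1}--\eqref{eqn : QstatLNC2} is accurate---in particular the pairing of $\overline\lambda$ with $LP(\bar x,I^{0+}(\bar x)\cup\beta^1)$ and $\underline\lambda$ with $LP(\bar x,I^{0+}(\bar x)\cup\beta^2)$, and the activity patterns on $I^{0-}(\bar x)$, $I^{+0}(\bar x)$, $I^{+-}(\bar x)$---and it recovers directly as complementary slackness what the paper extracts from the vanishing of the dual objective. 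The two routes deliver the same content; yours is marginally more self-contained, while the paper's makes the multiplier sign pattern visible at a glance as the feasible set of an explicit dual.
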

      \begin{proof}
	Feasibility of $d=0$ for $LP(\bar x,I^{0+}(\bar x) \cup \beta^1)$ and $LP(\bar x,I^{0+}(\bar x) \cup \beta^2)$
	implies
	\[\min \{ \nabla f (\bar x) \bar d^1 , \nabla f (\bar x) \bar d^2 \} \leq 0.\]
	Denote by $\tilde d^1$ and $\tilde d^2$ the solutions
	of $LP(\bar x,I^{0+}(\bar x) \cup \beta^1)$ and $LP(\bar x,I^{0+}(\bar x) \cup \beta^2)$
	without the constraint $-1 \leq d \leq 1$, and denote these problems by
	$\tilde{LP}^1$ and $\tilde{LP}^2$. Clearly, we have
	\[\min \{ \nabla f (\bar x) \tilde d^1 , \nabla f (\bar x) \tilde d^2 \} \leq
	\min \{ \nabla f (\bar x) \bar d^1 , \nabla f (\bar x) \bar d^2 \}.\]
	
	The dual problem of $\tilde{LP}^j$ for $j=1,2$ is given by
	\begin{equation} \label{eq : DualdProb}
	\begin{array}{rl}
	\max\limits_{\lambda \in \R^m}
	& - \sum_{i \in I} \lambda_i^g (g_i(\bar x))^- - \sum_{i \in W_2^j} \left( \lambda_i^H (-H_i(\bar x))^- + \lambda_i^G (G_i(\bar x))^- \right) \\
	\textrm{subject to } & \eqref{eq : StatEq} \,\, \textrm{ and } \,\,
	\lambda_i^g \geq 0, i \in I, \lambda_i^H, \lambda_i^G \geq 0, i \in W_2^j, \lambda_i^G = 0, i \in W_1^j,
	\end{array}
	\end{equation}
	where $\lambda = (\lambda^h,\lambda^g,\lambda^H,\lambda^G)$, $m = \vert E \vert + \vert I \vert + 2 \vert V \vert$,
	$W_1^j := I^{0+}(\bar x) \cup \beta^j$, $W_2^j := V \setminus W_1^j$.
	
	Assume first that $\bar x$ is $\mathcal{Q}$-stationary with respect to $(\beta^1,\beta^2) \in \mathcal{P}(I^{00}(\bar x))$.
	Then the multipliers $\overline\lambda$, $\underline\lambda$ from definition of $\mathcal{Q}$-stationarity are feasible
	for dual problems of $\tilde{LP}^1$ and $\tilde{LP}^2$, respectively, both with the objective value equal to zero.
	Hence, duality theory of linear programming yields that $\min \{ \nabla f (\bar x) \tilde d^1 , \nabla f (\bar x) \tilde d^2 \} \geq 0$
	and consequently \eqref{eq : LinSolZero} follows.
	
	On the other hand, if \eqref{eq : LinSolZero} is fulfilled, is follows that
	$\min \{ \nabla f (\bar x) \tilde d^1 , \nabla f (\bar x) \tilde d^2 \} = 0$
	as well. Thus, $d=0$ is an optimal solution for $\tilde{LP}^1$ and $\tilde{LP}^2$
	and duality theory of linear programming yields that the solutions $\lambda^1$ and $\lambda^2$
	of the dual problems exist and their objective values are both zero.
	However, this implies that for $j=1,2$ we have
	\[\lambda_i^{g,j} g_i(\bar x) = 0, i \in I, \lambda_i^{H,j} H_i(\bar x) = 0 , \lambda_i^{G,j} G_i(\bar x) = 0, i \in V\]
	and consequently $\lambda^1$ fulfills the conditions of $\overline\lambda$ and $\lambda^2$
	fulfills the conditions of $\underline\lambda$,
	showing that $\bar x$ is indeed $\mathcal{Q}$-stationary with respect to $(\beta^1,\beta^2)$.
      \end{proof}
      
      Now for each $k$ consider two partitions $(W_{1,k}^1,W_{2,k}^1), (W_{1,k}^2,W_{2,k}^2) \in \mathcal P(V)$
      and let $d_k^1$ and $d_k^2$ denote the solutions of $LP(x_k, W_{1,k}^1)$ and $LP(x_k, W_{1,k}^2)$.
      Choose $d_k \in \{d_k^1, d_k^2\}$ such that
	\begin{equation} \label{eq : d_kDef}
	  \nabla f (x_k) d_k = \min_{d \in \{d_k^1, d_k^2\}} \nabla f (x_k) d
	\end{equation}
      and let $(W_{1,k},W_{2,k}) \in \{(W_{1,k}^1,W_{2,k}^1), (W_{1,k}^2,W_{2,k}^2)\}$ denote the corresponding partition.
      Next, we define the function $\varphi_k$ in the following way
      \begin{equation} \label{eq : varphikDef}
	\varphi_k(x) := f(x) + \sum \limits_{i \in E} \sigma_{i,k}^h \vert h_i(x) \vert +
	\sum \limits_{i \in I} \sigma_{i,k}^g ( g_i(x) )^+
	+ \sum \limits_{i \in W_{1,k}} \sigma_{i,k}^F d(F_i(x),P^1)
	+ \sum \limits_{i \in W_{2,k}} \sigma_{i,k}^F d(F_i(x),P^2).      
      \end{equation}
      Note that the function $\varphi_k$ coincides with $\varphi$ for $a,b$ given by \eqref{eq : abDef} with $(W_1,W_2) := (W_{1,k},W_{2,k})$
      and $\omega = (\omega^{\mathcal E}, \omega^{\mathcal I})$ given by
	\[
	  \omega^{\mathcal E} := (\sigma^h_{i,k}, i \in E, \sigma^F_{i,k}, i \in W_{1,k}), \qquad
	  \omega^{\mathcal I} := (\sigma^g_{i,k}, i \in I, \sigma^F_{i,k}, i \in W_{2,k}, \sigma^F_{i,k}, i \in W_{2,k}).
	\]
      
      \begin{proposition} \label{Pro : MeritLP}
	For all $x \in \R^n$ it holds that
	\begin{equation} \label{eq : PhiVarphi}
	 0 \leq \varphi_k(x) - \Phi_k(x) \leq \norm{\sigma_k^F}_{\infty} \vert V \vert
	 \max \{ \max_{i \in W_{1,k}} d(F_i(x),P^1), \max_{i \in W_{2,k}} d(F_i(x),P^2) \}.
	\end{equation}
      \end{proposition}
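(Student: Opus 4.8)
The plan is to subtract the two merit functions directly and exploit the relation $P = P^1 \cup P^2$ noted before \eqref{eq : deltaprob} together with the distance formula \eqref{eq : DistOfCup}. Since $\varphi_k$ and $\Phi_k$ share identical objective and $h,g$ penalty terms and differ only in the vanishing-constraint contributions, and since $(W_{1,k},W_{2,k})$ is a partition of $V$, I would first split the term $\sum_{i \in V} \sigma_{i,k}^F d(F_i(x),P)$ appearing in $\Phi_k$ over $W_{1,k}$ and $W_{2,k}$ and cancel against the matching parts of $\varphi_k$, obtaining
\[
\varphi_k(x) - \Phi_k(x) = \sum_{i \in W_{1,k}} \sigma_{i,k}^F \bigl( d(F_i(x),P^1) - d(F_i(x),P) \bigr) + \sum_{i \in W_{2,k}} \sigma_{i,k}^F \bigl( d(F_i(x),P^2) - d(F_i(x),P) \bigr).
\]

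The key observation is that \eqref{eq : DistOfCup} applied with $A = P^1$, $B = P^2$ gives $d(F_i(x),P) = \min\{ d(F_i(x),P^1), d(F_i(x),P^2) \}$, whence $d(F_i(x),P) \leq d(F_i(x),P^j)$ for $j = 1,2$. Each bracketed term is therefore nonnegative, and since the penalty parameters $\sigma_{i,k}^F$ are positive, every summand is nonnegative; this already yields the lower bound $0 \leq \varphi_k(x) - \Phi_k(x)$.

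For the upper bound I would use nonnegativity of the distance to discard $-d(F_i(x),P) \leq 0$, so that each bracketed term is bounded by $d(F_i(x),P^j)$. Bounding each coefficient by $\norm{\sigma_k^F}_{\infty}$ and each remaining distance by the maximum on the right-hand side of \eqref{eq : PhiVarphi}, and noting that $W_{1,k}$ and $W_{2,k}$ together contain exactly $\vert V \vert$ indices, produces the factor $\vert V \vert$ and completes the estimate.

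There is no substantial obstacle here: the argument is a direct computation once the partition property of $(W_{1,k},W_{2,k})$ and the min-formula \eqref{eq : DistOfCup} are in place. The only point needing minor care is the bookkeeping of the index sets when splitting the $\Phi_k$-sum, so that the cancellation leaving only the $W_{1,k}$ and $W_{2,k}$ contributions is correct.
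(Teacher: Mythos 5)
Your proof is correct and follows essentially the same route as the paper: both reduce the claim to the two-sided bound $0 \leq d(F_i(x),P^j) - d(F_i(x),P) \leq d(F_i(x),P^j)$ obtained from \eqref{eq : DistOfCup} and nonnegativity of the distance, and then sum over the partition $(W_{1,k},W_{2,k})$ to get the factor $\norm{\sigma_k^F}_{\infty}\vert V\vert$ times the maximum. No gaps.
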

      \begin{proof}
	Non-negativity of the distance function, together with \eqref{eq : DistOfCup} yield for every $i \in V, j = 1,2$
	\[0 \leq d(F_i(x),P^j) - d(F_i(x),P) \leq d(F_i(x),P^j). \]
	Hence \eqref{eq : PhiVarphi} now follows from
	\[
	 \sum_{j = 1,2} \, \, \sum_{i \in W_{j,k}} \sigma_{i,k}^F d(F_i(x),P^j)
	 \leq \norm{\sigma_k^F}_{\infty} \vert V \vert \max_{j=1,2} \,\, \max_{i \in W_{j,k}} d(F_i(x),P^j).
	\]
      \end{proof}
         
      An outline of the extended algorithm is as follows.
      
      \begin{algorithm}[Solving the MPVC*] \label{AlgMPCC*} \rm \mbox{}
  
	\Itl1{1:} Initialization:
	\Itl2{}   Select a starting point $x_0 \in \R^n$ together with a positive definite $n \times n$ matrix $B_0$,
	\Itl3{}   a parameter $\rho_0 > 0$ and constants $\zeta \in (0,1)$, $\bar\rho > 1$ and $\mu \in (0,1)$.
	\Itl2{}   Select positive penalty parameters $\sigma_{-1} = (\sigma^h_{-1}, \sigma^g_{-1}, \sigma^F_{-1})$.
	\Itl2{}   Set the iteration counter $k := 0$.
	\Itl1{2:} Correction of the iterate:
	\Itl2{}   Set the corrected iterate by $\tilde x_{k} := x_k$.
	\Itl2{}   Take some $(W_{1,k}^1,W_{2,k}^1), (W_{1,k}^2,W_{2,k}^2) \in \mathcal P(V)$, compute $d_k^1$ and $d_k^2$
	\Itl3{}   as solutions of $LP(x_k, W_{1,k}^1)$ and $LP(x_k, W_{1,k}^2)$ and let $d_k$ be given by \eqref{eq : d_kDef}.
	\Itl2{}   Consider a sequence of numbers $\alpha_k^{(1)} = 1, \alpha_k^{(2)}, \alpha_k^{(3)}, \ldots$ with
		  $1 > \bar \alpha \geq \alpha_k^{(j+1)} / \alpha_k^{(j)} \geq \underline \alpha > 0$.
	\Itl2{}   If $\nabla f (x_k) d_k < 0$, denote by $j(k)$ the smallest $j$ fulfilling either
		  \begin{eqnarray} \label{eqn : NextIterCond}
		    \Phi_k(x_k + \alpha_k^{(j)} d_k) - \Phi_k(x_k) & \leq & \mu \alpha_k^{(j)} \nabla f (x_k) d_k, \\ \label{eqn : NextIterCond2}
		    \textrm{or } \qquad \alpha_k^{(j)} & \leq & \frac{\Phi_k(x_k) - \varphi_k(x_k)}{\mu \nabla f (x_k) d_k}.
		  \end{eqnarray}
	\Itl3{}   If $j(k)$ fulfills \eqref{eqn : NextIterCond}, set $\tilde x_{k} := x_k + \alpha_k^{j(k)} d_k$.
	\Itl1{3:} Solve the Auxiliary problem:
	\Itl2{}   Run Algorithm \ref{AlgSol} with data $\zeta, \bar\rho, \rho:= \rho_k, B:=B_k, \nabla f := \nabla f (\tilde x_k),$
	\Itl3{}   $h_i := h_i(\tilde x_k), \nabla h_i := \nabla h_i (\tilde x_k), i \in E,$ etc.
	\Itl2{}   If the Algorithm \ref{AlgSol} stops because of degeneracy,
	\Itl3{}   stop the Algorithm \ref{AlgMPCC*} with an error message.
	\Itl2{}   If the final iterate $s^N$ is zero, stop the Algorithm \ref{AlgMPCC*} and return $\tilde x_k$ as a solution.
	\Itl1{4:} Next iterate:
	\Itl2{}   Compute new penalty parameters $\sigma_{k}$.
	\Itl2{}   Set $x_{k+1} := \tilde x_k + s_k$ where $s_k$ is a point on the polygonal line connecting the points
	\Itl3{}   $s^0,s^1, \ldots, s^N$ such that an appropriate merit function depending on $\sigma_{k}$ is decreased.
	\Itl2{}   Set $\rho_{k+1} := \rho$, the final value of $\rho$ in Algorithm \ref{AlgSol}.
	\Itl2{}   Update $B_{k}$ to get positive definite matrix $B_{k+1}$.
	\Itl2{}   Set $k := k+1$ and go to step 2.
      \end{algorithm}
      
      Naturally, Remark \ref{rem : StoppingCriteria} regarding the stopping criteria for Algorithm \ref{AlgMPCC} aplies to this algorithm as well.
      
      \begin{lemma} \label{lem : WellDefNI}
	Index $j(k)$ is well defined.
      \end{lemma}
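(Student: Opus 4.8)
The plan is to show that whenever $\nabla f(x_k)d_k<0$ (the only situation in which $j(k)$ is invoked), at least one of the acceptance conditions \eqref{eqn : NextIterCond}, \eqref{eqn : NextIterCond2} holds for all sufficiently large $j$. Since $\alpha_k^{(j+1)}/\alpha_k^{(j)}\leq\bar\alpha<1$ forces $\alpha_k^{(j)}\leq\bar\alpha^{\,j-1}$ and hence $\alpha_k^{(j)}\to 0$, the set of admissible indices is then a nonempty subset of $\N$, and its smallest element $j(k)$ exists. I would split the argument according to the sign of $\Phi_k(x_k)-\varphi_k(x_k)$, which by Proposition \ref{Pro : MeritLP} is always nonpositive.

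First I would treat the case $\Phi_k(x_k)<\varphi_k(x_k)$. Then the right-hand side of \eqref{eqn : NextIterCond2}, namely $(\Phi_k(x_k)-\varphi_k(x_k))/(\mu\,\nabla f(x_k)d_k)$, is a quotient of a strictly negative numerator and a strictly negative denominator, hence a fixed strictly positive number. Because $\alpha_k^{(j)}\to 0$, for all large $j$ the value $\alpha_k^{(j)}$ drops below this threshold, so \eqref{eqn : NextIterCond2} is satisfied and $j(k)$ is well defined.

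The remaining case $\Phi_k(x_k)=\varphi_k(x_k)$ is where \eqref{eqn : NextIterCond2} is useless (its right-hand side is $0$ while $\alpha_k^{(j)}>0$), so I must instead produce \eqref{eqn : NextIterCond} from the Armijo-type estimate of Lemma \ref{lem : LP2}. Recall that $d_k$ solves $LP(x_k,W_{1,k})$, which coincides with $LP(x_k)$ for the data $a,b$ of \eqref{eq : abDef}, and that $\varphi_k$ is exactly the function $\varphi$ of \eqref{eq : varphiDef} for these same $a,b$ and the weights $\omega$ displayed after \eqref{eq : varphikDef}. I would therefore apply Lemma \ref{lem : LP2} with the compact set $C:=\{x_k\}$, with $\nu:=\mu$, and with $\epsilon:=-\nabla f(x_k)d_k>0$, obtaining some $\tilde\alpha>0$ with
\[
\varphi_k(x_k+\alpha d_k)-\varphi_k(x_k)\leq\mu\,\alpha\,\nabla f(x_k)d_k\qquad\forall\,\alpha\in[0,\tilde\alpha].
\]
Combining this with the left inequality $\Phi_k(\cdot)\leq\varphi_k(\cdot)$ of \eqref{eq : PhiVarphi} and the case assumption $\Phi_k(x_k)=\varphi_k(x_k)$ yields
\[
\Phi_k(x_k+\alpha d_k)-\Phi_k(x_k)\leq\varphi_k(x_k+\alpha d_k)-\varphi_k(x_k)\leq\mu\,\alpha\,\nabla f(x_k)d_k
\]
for every $\alpha\in[0,\tilde\alpha]$, i.e.\ \eqref{eqn : NextIterCond} holds whenever $\alpha_k^{(j)}\leq\tilde\alpha$, which again occurs for all large $j$ since $\alpha_k^{(j)}\to 0$.

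The only step requiring care is this last case, where the simple threshold argument fails and the descent lemma must be invoked; the key observation making it work is that $\varphi_k$ dominates $\Phi_k$ everywhere yet agrees with it at $x_k$, so the guaranteed decrease of $\varphi_k$ along $d_k$ transfers to $\Phi_k$. In both cases the set of admissible $j$ is nonempty, so its minimum $j(k)$ is well defined.
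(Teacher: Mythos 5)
Your proof is correct and follows essentially the same route as the paper's: the same case split on the sign of $\Phi_k(x_k)-\varphi_k(x_k)$, the threshold argument for \eqref{eqn : NextIterCond2} in the strict case, and the application of Lemma \ref{lem : LP2} with $\nu:=\mu$ and $C:=\{x_k\}$ combined with $\Phi_k\leq\varphi_k$ to get \eqref{eqn : NextIterCond} in the equality case. You additionally spell out why $\alpha_k^{(j)}\to 0$, which the paper leaves implicit.
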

      \begin{proof}
	In order to show that $j(k)$ is well defined, we have to prove the existence of some $j$ such that either \eqref{eqn : NextIterCond}
	or \eqref{eqn : NextIterCond2} is fulfilled. By \eqref{eq : PhiVarphi} we know that $\Phi_k(x_k) - \varphi_k(x_k) \leq 0$.
	In case $\Phi_k(x_k) - \varphi_k(x_k) < 0$ every $j$ sufficiently large clearly fulfills \eqref{eqn : NextIterCond2}. On the
	other hand, if $\Phi_k(x_k) - \varphi_k(x_k) = 0$, taking into account \eqref{eq : PhiVarphi} we obtain
	\[\Phi_k(x_k + \alpha d_k) - \Phi_k(x_k) \leq \varphi_k(x_k + \alpha d_k) - \varphi_k(x_k).\]
	However, Lemma \ref{lem : LP2} for $\nu := \mu$ and $C:= \{x_k\}$ yields that if $\nabla f (x_k) d_k < 0$
	then there exists some $\tilde \alpha$ such that
	\[\varphi_k(x_k + \alpha d_k) - \varphi_k(x_k) \leq \mu \alpha \nabla f (x_k) d_k\]
	holds for all $\alpha \in [0,\tilde \alpha]$ and thus \eqref{eqn : NextIterCond} is fulfilled for every $j$ sufficiently large.
	This finishes the proof.
      \end{proof}
      
      \subsection{Convergence of the extended algorithm}

	We consider the behavior of the Algorithm \ref{AlgMPCC*} when it does not prematurely stop and it generates an infinite
	sequence of iterates
	\[x_k, B_k, \Beta_k, \underline\lambda_k^{N_k}, \overline\lambda_k^{N_k},
	(s_k^{t}, \delta_k^{t}), \lambda_k^{t}, (V_{1,k}^{t}, V_{2,k}^{t}), \,\,
	\textrm{ and } \,\, \tilde x_k, d_k^1, d_k^2, (W_{1,k}^1,W_{2,k}^1), (W_{1,k}^2,W_{2,k}^2).\]
	We discuss the convergence behavior under the following additional assumption.
	
	\begin{assumption} \label{ass : AlgMPVC*}
	Let $\bar x$ be a limit point of the sequence of iterates $x_k$.
	\begin{enumerate}
	  \item Mangasarian-Fromovitz constraint qualification (MFCQ) holds at $\bar x$ for
		constraints $x \in A$, where $A$ is given by \eqref{eq : FeasSetDef} and $a,b$ are given by \eqref{eq : abDef}
		with $(W_1,W_2) := (I^{0+}(\bar x),V \setminus I^{0+}(\bar x))$ or
		$(W_1,W_2) := (I^{0+}(\bar x) \cup I^{00}(\bar x),V \setminus (I^{0+}(\bar x) \cup I^{00}(\bar x)))$.
	  \item There exists a subsequence $K(\bar x)$ such that
		$\lim_{k \setto{K(\bar x)} \infty} x_k = \bar x$ and
		\[W_{1,k}^1 = I^{0+}(\bar x), \,\, W_{1,k}^2 = I^{0+}(\bar x) \cup I^{00}(\bar x) \textrm{ for all } k \in K(\bar x).\]
	\end{enumerate}
	\end{assumption}
	
	Note that the Next iterate step from Algorithm \ref{AlgMPCC*} remains almost unchanged compared to the Next iterate step from Algorithm \ref{AlgMPCC},
	we just consider the point $\tilde x_k$ instead of $x_k$. Consequently, most of the results from subsections 4.1 and 4.2 remain valid,
	possibly after replacing $x_k$ by $\tilde x_k$ where needed, e.g. in Lemma \ref{lem : GeneralMerit}. The only exception is
	the proof of Lemma \ref{lem : NextIterCons}, where we have to show that the sequence $\Phi_k(x_k)$ is monotonically decreasing.
	This follows now from \eqref{eqn : NextIterCond} and hence Lemma \ref{lem : NextIterCons} remains valid as well.
	
	We state now the main result of this section.
      
      \begin{theorem} \label{The : Betastat}
	Let Assumption \ref{ass : AlgMPCC} and Assumption \ref{ass : AlgMPVC*} be fulfilled.
	Then every limit point of the sequence of iterates $x_k$
	is at least $\mathcal Q_M$-stationary for problem \eqref{eq : genproblem}.
      \end{theorem}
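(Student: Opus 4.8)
The plan is to establish the \emph{stronger} property that $\bar x$ is $\mathcal{Q}$-stationary with respect to the single partition $(\emptyset,I^{00}(\bar x))$: by the chain of implications recorded in Section~2 this already delivers $\mathcal{Q}_M$-stationarity, since for this partition the multiplier $\underline\lambda$ automatically satisfies \eqref{eq : MStatCond}. By Proposition~\ref{Pro : SolLP} with $(\beta^1,\beta^2)=(\emptyset,I^{00}(\bar x))$ this reduces to verifying $\min\{\nabla f(\bar x)\bar d^1,\nabla f(\bar x)\bar d^2\}=0$, where $\bar d^1,\bar d^2$ solve $LP(\bar x,I^{0+}(\bar x))$ and $LP(\bar x,I^{0+}(\bar x)\cup I^{00}(\bar x))$. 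Feasibility of $d=0$ for both programs gives ``$\le 0$'' for free, so the whole task is to rule out strict negativity.

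I would argue by contradiction, assuming $\min\{\nabla f(\bar x)\bar d^1,\nabla f(\bar x)\bar d^2\}=-c<0$. Fix the subsequence $K(\bar x)$ of Assumption~\ref{ass : AlgMPVC*}(2), along which $x_k\to\bar x$ and the correction-step partitions are exactly $W_{1,k}^1=I^{0+}(\bar x)$, $W_{1,k}^2=I^{0+}(\bar x)\cup I^{00}(\bar x)$. Then for $k\in K(\bar x)$ the programs $LP(x_k,W_{1,k}^1)$, $LP(x_k,W_{1,k}^2)$ differ from the two programs above only through $x_k\to\bar x$, the index sets being frozen. Using Assumption~\ref{ass : AlgMPVC*}(1) (MFCQ at $\bar x$ for both index choices) I apply Lemma~\ref{lem : LP1} to each, obtaining $\limsup_{k\in K(\bar x)}\nabla f(x_k)d_k^{\,j}\le\nabla f(\bar x)\bar d^{\,j}$ for $j=1,2$. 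Since $d_k$ is chosen by \eqref{eq : d_kDef} to realise the smaller of the two values, $\limsup\nabla f(x_k)d_k\le-c$, hence $\nabla f(x_k)d_k\le-c/2<0$ for all large $k\in K(\bar x)$; in particular the descent branch of the correction step is entered.

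Next I would exploit the merit-function bookkeeping. As in Lemma~\ref{lem : Sigmas} and the adaptation of Lemma~\ref{lem : NextIterCons} to $\tilde x_k$, the penalty parameters are eventually constant, $\Phi_k\equiv\Phi$, and $\Phi(x_k)$ is monotone and bounded below, so the correction decreases $\Phi(x_k)-\Phi(\tilde x_k)\ge0$ are summable and tend to zero. I then invoke Lemma~\ref{lem : LP2} with $\nu:=\mu$, $\epsilon:=c/2$ and a compact neighbourhood $C$ of $\bar x$ (along $K(\bar x)$ only the two frozen partitions occur, so $\varphi_k$ takes only two forms): there is $\tilde\alpha>0$ with $\varphi_k(x_k+\alpha d_k)-\varphi_k(x_k)\le\mu\alpha\nabla f(x_k)d_k$ for $\alpha\in[0,\tilde\alpha]$. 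The decisive ingredient is that the partition-infeasibility gap $\varphi_k(x_k)-\Phi_k(x_k)\ge0$ bounded in \eqref{eq : PhiVarphi} tends to $0$ along $K(\bar x)$: because the prescribed partitions match the limiting index sets, every summand $d(F_i(x_k),P^j)-d(F_i(x_k),P)$ vanishes in the limit. Consequently the threshold in the safeguard \eqref{eqn : NextIterCond2}, namely $(\Phi_k(x_k)-\varphi_k(x_k))/(\mu\nabla f(x_k)d_k)$, tends to $0$ and cannot be activated before the true Armijo test \eqref{eqn : NextIterCond}; combining $\Phi_k\le\varphi_k$ with the uniform decrease of $\varphi_k$ then forces \eqref{eqn : NextIterCond} to hold at a step $\alpha_k^{j(k)}$ bounded below by a fixed multiple of $\tilde\alpha$. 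This gives $\Phi(\tilde x_k)-\Phi(x_k)\le\mu\alpha_k^{j(k)}\nabla f(x_k)d_k\le-\mu\underline\alpha\tilde\alpha\,c/2<0$ for infinitely many $k\in K(\bar x)$, contradicting $\Phi(x_k)-\Phi(\tilde x_k)\to0$.

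I expect the last step to be the genuine obstacle: the two acceptance rules \eqref{eqn : NextIterCond}--\eqref{eqn : NextIterCond2} interact with the gap $\varphi_k(x_k)-\Phi_k(x_k)$, and one must show that along $K(\bar x)$ this gap shrinks fast enough that the safeguard \eqref{eqn : NextIterCond2} does not pre-empt a genuine, uniformly bounded-below Armijo step for $\Phi_k$ (the raw estimate $\Phi_k(x_k+\alpha d_k)-\Phi_k(x_k)\le\mu\alpha\nabla f(x_k)d_k+(\varphi_k(x_k)-\Phi_k(x_k))$ alone only yields descent, not the Armijo decrease). Once the contradiction is reached, $\min\{\nabla f(\bar x)\bar d^1,\nabla f(\bar x)\bar d^2\}=0$, so by Proposition~\ref{Pro : SolLP} $\bar x$ is $\mathcal{Q}$-stationary with respect to $(\emptyset,I^{00}(\bar x))$ and hence $\mathcal{Q}_M$-stationary; that $\bar x$ is feasible and at least M-stationary is already guaranteed by Theorem~\ref{The : Mstat}.
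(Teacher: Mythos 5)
Your proposal follows the same route as the paper: reduce to $\mathcal Q$-stationarity with respect to $(\emptyset,I^{00}(\bar x))$ via Proposition \ref{Pro : SolLP}, argue by contradiction, use Lemma \ref{lem : LP1} with MFCQ to get $\nabla f(x_k)d_k\leq -\epsilon<0$ along $K(\bar x)$, apply Lemma \ref{lem : LP2} to the surrogate $\varphi_k$, and derive a uniformly negative decrease of $\Phi_k$ contradicting the summability of $\Phi_k(x_{k+1})-\Phi_k(x_k)$. You also correctly identify the two delicate points (the gap $\varphi_k(x_k)-\Phi_k(x_k)$ and the interaction of \eqref{eqn : NextIterCond} with the safeguard \eqref{eqn : NextIterCond2}), and your treatment of the safeguard --- showing its threshold is below $\underline\alpha\tilde\alpha$ so that it cannot fire before a genuine Armijo step --- matches the paper's.

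However, the decisive step is left as an unproved assertion, and with your parameter choice it actually fails. You invoke Lemma \ref{lem : LP2} with $\nu:=\mu$, which gives only
\[
\Phi_k(x_k+\alpha d_k)-\Phi_k(x_k)\ \leq\ \mu\,\alpha\,\nabla f(x_k)d_k+\bigl(\varphi_k(x_k)-\Phi_k(x_k)\bigr),
\]
and since the gap is nonnegative, no amount of ``the gap tends to zero'' turns this into the Armijo inequality \eqref{eqn : NextIterCond}, which has zero slack on its right-hand side; the subsequent claim that \eqref{eqn : NextIterCond} is ``forced to hold at a step bounded below by a fixed multiple of $\tilde\alpha$'' does not follow. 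The repair --- and this is exactly what the paper does --- is to run Lemma \ref{lem : LP2} with $\nu:=(1+\mu)/2>\mu$, so that for $\alpha>\underline\alpha\tilde\alpha$ one has a surplus $\tfrac{1-\mu}{2}\alpha\vert\nabla f(x_k)d_k\vert\geq\tfrac{1-\mu}{2}\underline\alpha\tilde\alpha\,\epsilon$ bounded away from zero, and then to quantify the gap via \eqref{eq : PhiVarphi} and the estimate \eqref{eq : FCloseToBarx}, namely $\varphi_k(x_k)-\Phi_k(x_k)\leq-\min\{\tfrac{1-\mu}{2},\mu\}\,\underline\alpha\tilde\alpha\,\nabla f(x_k)d_k$ for $k$ large; the $\tfrac{1-\mu}{2}$ branch is absorbed by the surplus to yield \eqref{eqn : NextIterCond}, while the $\mu$ branch simultaneously rules out \eqref{eqn : NextIterCond2} for all $\alpha>\underline\alpha\tilde\alpha$. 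With that modification your argument closes; as written, it stops exactly at the obstacle you yourself flagged.
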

      \begin{proof}
	Let $\bar{x}$ denote a limit point of the sequence $x_k$ and let $K(\bar x)$ denote a subsequence from Assumption \ref{ass : AlgMPVC*} (2.).
	Since
	\[\norm{x_{k}-\tilde x_{k-1}} \leq S_{k-1}^{N_{k-1}} \to 0 \]
	we conclude that $\lim_{k \setto{K(\bar x)} \infty} \tilde x_{k-1} = \bar x$ and by applying Theorem \ref{The : Mstat} to sequence $\tilde x_{k-1}$
	we obtain the feasibility of $\bar x$ for problem \eqref{eq : genproblem}.
	
	Next we consider $\bar d^1,\bar d^2$ as in Proposition \ref{Pro : SolLP} with $\beta^1 := \emptyset$
	and without loss of generality we only consider $k \in K(\bar x), k \geq \bar k$, where $\bar k$ is given by Lemma \ref{lem : Sigmas}.
	We show by contraposition that the case $\min \{ \nabla f (\bar x) \bar d^1 , \nabla f (\bar x) \bar d^2 \} < 0$ can not occur.
	Let us assume on the contrary that, say $\nabla f (\bar x) \bar d^1 < 0$.
	Assumption \ref{ass : AlgMPVC*} (2.) yields that $W_{1,k}^1 = I^{0+}(\bar x)$ and
	feasibility of $\bar x$ for \eqref{eq : genproblem} together with $I^{0+}(\bar x) \subset W_{1,k}^1 \subset I^{0}(\bar x)$ imply
	$\bar x \in A$ for $A$ given by \eqref{eq : FeasSetDef} and $a,b$ given by \eqref{eq : abDef} with $(W_1,W_2) := (W_{1,k}^1,W_{2,k}^1)$.
	Taking into account Assumption \ref{ass : AlgMPVC*} (1.), Lemma \ref{lem : LP1} then yields that for
	$\epsilon := - \nabla f(\bar x) \bar d^1 /2 > 0$ there exists $\delta$ such that for all $\norm{x_k - \bar x} \leq \delta$
	we have $\nabla f(x_k) d_k \leq \nabla f(x_k) d_k^1 \leq \nabla f(\bar x) \bar d^1 /2 = - \epsilon$, with
	$d_k$ given by \eqref{eq : d_kDef}.
	
	Next, we choose $\hat k$ to be such that for $k \geq \hat k$ it holds that $\norm{x_k - \bar x} \leq \delta$
	and we set $\nu := (1 + \mu)/2$, $C:= \{x \mv \norm{x - \bar x} \leq \delta\}$. From Lemma \ref{lem : LP2} we obtain that
	\begin{equation} \label{eq : varphiInter}
	  \varphi_k(x_k + \alpha d_k) - \varphi_k(x_k) \leq \frac{1+ \mu}{2} \alpha \nabla f(x_k) d_k
	\end{equation}
	holds for all $\alpha \in [0,\tilde \alpha]$.
	Moreover, by choosing $\hat k$ larger if necessary we can assume that for all $i \in V$ we have
	\begin{equation} \label{eq : FCloseToBarx}
	  \norm{F_i(x_k) - F_i(\bar x)}_1 \leq - \min \left\{\frac{1 - \mu}{2} , \mu \right\}
	  \frac{ \underline \alpha \tilde \alpha \nabla f(x_k) d_k}{\norm{\sigma_k^F}_{\infty} \vert V \vert}.
	\end{equation}
	For the partition $(W_{1,k},W_{2,k}) \in \{(W_{1,k}^1,W_{2,k}^1), (W_{1,k}^2,W_{2,k}^2)\}$ corresponding to $d_k$ it holds that
	$I^{0+}(\bar x) \subset W_{1,k} \subset I^{0}(\bar x)$ and this, together with the feasibility of $\bar x$ for \eqref{eq : genproblem},
	imply $F_i(\bar x) \in P^j, i \in W_{j,k}$ for $j=1,2$. Therefore, taking into account \eqref{eq : DistIfIn}, we obtain
	\[\max \{ \max_{i \in W_{1,k}} d(F_i(x_k),P^1), \max_{i \in W_{2,k}} d(F_i(x_k),P^2) \} \leq \max_{i \in V} \norm{F_i(x_k) - F_i(\bar x)}_1.\]
	Consequently, \eqref{eq : PhiVarphi} and \eqref{eq : FCloseToBarx} yield for all $\alpha > \underline \alpha \tilde \alpha$
	\[\varphi(x_k) - \Phi_k(x_k) < - \min \left\{\frac{1 - \mu}{2} , \mu \right\} \alpha \nabla f (x_k) d_k.\]
	Thus, from \eqref{eq : varphiInter} and \eqref{eq : PhiVarphi} we obtain for all $\alpha \in (\underline \alpha \tilde \alpha,\tilde \alpha]$
	\begin{eqnarray*}
	  \Phi_k(x_k + \alpha d_k) - \Phi_k(x_k) & \leq & \varphi(x_k + \alpha d_k) - \varphi(x_k) + \varphi(x_k) - \Phi_k(x_k)
	  \leq \mu \alpha \nabla f (x_k) d_k \\
	  \textrm{and } \qquad \Phi_k(x_k) - \varphi(x_k) & > & \mu \alpha \nabla f (x_k) d_k.
	\end{eqnarray*}
	
	Now consider $j$ with $\alpha_k^{(j-1)} > \tilde \alpha \geq \alpha_k^{(j)}$.
	We see that $\alpha_k^{(j)} \in (\underline \alpha \tilde \alpha,\tilde \alpha]$, since
	$\alpha_k^{(j)} \geq \underline \alpha \alpha_k^{(j-1)} > \underline \alpha \tilde \alpha$
	and consequently $j$ fulfills \eqref{eqn : NextIterCond} and violates \eqref{eqn : NextIterCond2}.
	However, then we obtain for all $k \geq \hat k$
	\[ \Phi_k(x_{k+1}) - \Phi_k(x_{k}) \leq \mu \alpha_k^{(j(k))} \nabla f (x_k) d_k =
	\mu \underline \alpha \tilde \alpha \nabla f(\bar x) \bar d /2 < 0,\]
	a contradiction.
	
	Hence it follows that the solutions $\bar d^1,\bar d^2$ fulfill
	$\min \{ \nabla f (\bar x) \bar d^1 , \nabla f (\bar x) \bar d^2 \} = 0$ and by
	Proposition \ref{Pro : SolLP} we conclude that $\bar x$
	is $\mathcal Q$-stationary with respect to $(\emptyset,I^{00}(\bar x))$ and
	consequently also $\mathcal Q_M$-stationary for problem \eqref{eq : genproblem}.
      \end{proof}
      
      Finally, we discuss how to choose the partitions $(W_{1,k}^1,W_{2,k}^1)$ and $(W_{1,k}^2,W_{2,k}^2)$ such that Assumption \ref{ass : AlgMPVC*} (2.)
      will be fulfilled. Let us consider a sequence of nonnegative numbers $\epsilon_k$ such that for every limit point $\bar x$
      with $\lim_{k \setto K \infty} x_k = \bar x$ it holds that
      \begin{equation} \label{eq : EpsK}
	\lim_{k \setto K \infty} \frac{\epsilon_k}{\norm{x_{k}- \bar x}_{\infty}} \to \infty
      \end{equation}
      and let us define
      \begin{eqnarray*}
      \tilde I^{0+}_k & := & \{ i \in V \mv \vert H_i(x_k) \vert \leq \epsilon_k < G_i(x_k) \}, \\
      \tilde I^{00}_k & := & \{ i \in V \mv \vert H_i(x_k) \vert \leq \epsilon_k \geq \vert G_i(x_k) \vert \}, \\
      \tilde I^{0-}_k & := & \{ i \in V \mv \vert H_i(x_k) \vert \leq \epsilon_k < -G_i(x_k) \}, \\
      \tilde I^{+0}_k & := & \{ i \in V \mv H_i(x_k) > \epsilon_k \geq \vert G_i(x_k) \vert \}, \\
      \tilde I^{+-}_k & := & \{ i \in V \mv H_i(x_k) > \epsilon_k < -G_i(x_k) \}.      
      \end{eqnarray*}
      
      \begin{proposition}
	For $W_{1,k}^1$ and $W_{1,k}^2$ defined by $W_{1,k}^1 := \tilde I^{0+}_k$ and $W_{1,k}^1 := \tilde I^{0+}_k \cup \tilde I^{00}_k$
	the Assumption \ref{ass : AlgMPVC*} (2.) is fulfilled.
      \end{proposition}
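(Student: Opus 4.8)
The plan is to use the threshold sequence $\epsilon_k$ as a filter that is, at once, coarse enough to absorb the discrepancy between the constraint values at $x_k$ and at the limit point $\bar x$, and fine enough to still detect the constraints that are strictly nonzero at $\bar x$. Let $\bar x$ be the given limit point and let $K$ be a subsequence with $x_k \setto K \bar x$. Since $H_i$ and $G_i$ are continuously differentiable, they are Lipschitz with a common modulus $L$ on a neighborhood of $\bar x$, so for $k \in K$ large enough
\[ \vert H_i(x_k) - H_i(\bar x) \vert \leq L \norm{x_k - \bar x}_\infty, \qquad \vert G_i(x_k) - G_i(\bar x) \vert \leq L \norm{x_k - \bar x}_\infty, \quad i \in V. \]
Dividing by $\epsilon_k$ and invoking the defining property \eqref{eq : EpsK} of the sequence $\epsilon_k$ shows that $\vert H_i(x_k) - H_i(\bar x) \vert = o(\epsilon_k)$ and $\vert G_i(x_k) - G_i(\bar x) \vert = o(\epsilon_k)$ as $k \setto K \infty$. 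In particular, for every $i \in V$ with $H_i(\bar x) = 0$ one has $\vert H_i(x_k) \vert \leq \epsilon_k$ eventually, and for every $i$ with $G_i(\bar x) = 0$ one has $\vert G_i(x_k) \vert \leq \epsilon_k$ eventually.

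Next I would run through the cases for $i \in V$ according to the pair $(H_i(\bar x), G_i(\bar x))$, using in addition that $\epsilon_k \to 0$ together with the continuity relations $H_i(x_k) \to H_i(\bar x)$ and $G_i(x_k) \to G_i(\bar x)$. If $i \in I^{0+}(\bar x)$ then $\vert H_i(x_k) \vert \leq \epsilon_k$, whereas $G_i(x_k) \to G_i(\bar x) > 0$ and $\epsilon_k \to 0$ force $G_i(x_k) > \epsilon_k$, so $i \in \tilde I^{0+}_k$. If $i \in I^{00}(\bar x)$ then both $\vert H_i(x_k) \vert \leq \epsilon_k$ and $\vert G_i(x_k) \vert \leq \epsilon_k$, placing $i$ in $\tilde I^{00}_k$ and excluding it from $\tilde I^{0+}_k$. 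Conversely, if $H_i(\bar x) \neq 0$ then $\vert H_i(x_k) \vert > \epsilon_k$ eventually, so $i$ belongs to none of $\tilde I^{0+}_k, \tilde I^{00}_k$; and if $H_i(\bar x) = 0$ while $G_i(\bar x) < 0$, i.e. $i \in I^{0-}(\bar x)$, then $G_i(x_k) < -\epsilon_k$ eventually, which excludes $i$ both from $\tilde I^{0+}_k$ and from $\tilde I^{0+}_k \cup \tilde I^{00}_k$. Writing $\tilde I^{0+}_k \cup \tilde I^{00}_k = \{ i \in V \mv \vert H_i(x_k) \vert \leq \epsilon_k,\ G_i(x_k) \geq -\epsilon_k \}$ and comparing with $I^{0+}(\bar x) \cup I^{00}(\bar x) = \{ i \in V \mv H_i(\bar x) = 0,\ G_i(\bar x) \geq 0 \}$, these cases together give, for all sufficiently large $k \in K$,
\[ \tilde I^{0+}_k = I^{0+}(\bar x), \qquad \tilde I^{0+}_k \cup \tilde I^{00}_k = I^{0+}(\bar x) \cup I^{00}(\bar x). \]

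Since $V$ is finite, the finitely many membership tests all settle at once, so there is a single $k_0$ past which both identities hold throughout $K$. Setting $K(\bar x) := \{ k \in K \mv k \geq k_0 \}$ and recalling the definitions $W_{1,k}^1 := \tilde I^{0+}_k$ and $W_{1,k}^2 := \tilde I^{0+}_k \cup \tilde I^{00}_k$ then reproduces precisely the requirements $W_{1,k}^1 = I^{0+}(\bar x)$ and $W_{1,k}^2 = I^{0+}(\bar x) \cup I^{00}(\bar x)$ of Assumption \ref{ass : AlgMPVC*} (2.), which finishes the argument.

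I expect the only delicate point to be the balance between the two competing demands on $\epsilon_k$. The ratio condition \eqref{eq : EpsK} is exactly what is needed so that the perturbation of a vanishing constraint value stays below the threshold $\epsilon_k$, keeping such a constraint classified as ``zero''; the property $\epsilon_k \to 0$ is exactly what is needed so that a constraint value bounded away from zero at $\bar x$ eventually exceeds $\epsilon_k$, getting classified as ``positive'' or ``negative''. Both are indispensable, and in the write-up I would be careful to invoke $\epsilon_k \to 0$ explicitly at each step where a strictly nonzero limit value must overtake the threshold.
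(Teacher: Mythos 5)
Your argument is correct and matches the paper's proof in its essential mechanism: the ratio condition \eqref{eq : EpsK} forces the perturbation $\norm{\mathcal F(x_k)-\mathcal F(\bar x)}_\infty$ below the threshold $\epsilon_k$, while $\epsilon_k \to 0$ lets the nonzero limit values overtake it, and a finite case analysis settles the index sets. The only cosmetic difference is at the end: the paper establishes the inclusions $I^{0+}(\bar x) \subset \tilde I^{0+}_k, \ldots, I^{+-}(\bar x) \subset \tilde I^{+-}_k$ for all five index sets and upgrades them to equalities via a disjointness/partition count, whereas you get the two needed equalities directly from an exhaustive case analysis on $(H_i(\bar x),G_i(\bar x))$ --- both routes rest equally on the assumption $\epsilon_k \to 0$, which is implicit in the paper's proof and which you rightly flag as indispensable.
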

      \begin{proof}
	Let $\bar x$ be a limit point of the sequence $x_k$ such that $\lim_{k \setto{K} \infty} x_k = \bar x$.
	Recall that $\mathcal{F}$ is given by \eqref{eqn : mathcFdef} and let us set
	$L := \max_{\norm{x - \bar x}_{\infty} \leq 1} \norm{\nabla \mathcal{F}(x)}_{\infty}$,
	where $\norm{\nabla \mathcal{F}(x)}_{\infty}$ is given by \eqref{eq : MatNormDef}.
	Further, taking into account \eqref{eq : EpsK}, consider $\hat k$ such that for all $k \geq \hat k$ it holds that
	$\norm{x_k - \bar x}_{\infty} \leq \min \left\{ \epsilon_k / L,1 \right\}$.
	Hence, for all $k \in K$ with $k \geq \hat k$ we conclude
	\begin{equation} \label{eq : FxktoBarx}
	  \norm{ \mathcal{F}(x_k) - \mathcal{F}(\bar x)}_{\infty} \leq
	  \int_0^1 \norm{\nabla \mathcal{F}(\bar x + t(x_k - \bar x))}_{\infty}\norm{x_k - \bar x}_{\infty} dt \leq \epsilon_k.
	\end{equation}
	Now consider $i \in I^{0+}(\bar x)$, i.e. $H_i(\bar x) = 0 < G_i(\bar x)$. By choosing $\hat k$ larger if necessary we can assume that for all
	$k \geq \hat k$ it holds that $\epsilon_k < G_i(\bar x)/2$ and consequently, taking into account \eqref{eq : FxktoBarx},
	for all $k \in \{k \in K \mv k \geq \hat k\}$ we have
	\[
	  \vert H_i(x_k) \vert = \vert H_i(x_k) - H_i(\bar x) \vert \leq \epsilon_k < G_i(\bar x) - \epsilon_k \leq G_i(x_k),
	\]
	showing $i \in \tilde I^{0+}_k$. By similar argumentation and by increasing $\hat k$ if necessary we obtain that
	for all $k \in \{k \in K \mv k \geq \hat k\} =: K(\bar x)$ it holds that
	\begin{equation} \label{eq : IndexSetSub}
	  I^{0+}(\bar x) \subset \tilde I^{0+}_k, \,\, I^{00}(\bar x) \subset \tilde I^{00}_k, \,\, I^{0-}(\bar x) \subset \tilde I^{0-}_k, \,\,
	  I^{+0}(\bar x) \subset \tilde I^{+0}_k, \,\, I^{+-}(\bar x) \subset \tilde I^{+-}_k.
	\end{equation}
	
	However, feasibility of $\bar x$ for \eqref{eq : genproblem} yields
	\[V = I^{0+}(\bar x) \cup I^{00}(\bar x) \cup I^{0-}(\bar x) \cup I^{+0}(\bar x) \cup I^{+-}(\bar x)\]
	and the index sets $\tilde I^{0+}_k, \tilde I^{00}_k, \tilde I^{0-}_k, \tilde I^{+0}_k, \tilde I^{+-}_k$ are
	pairwise disjoint subsets of $V$ by definition. Hence we claim that \eqref{eq : IndexSetSub} must in fact hold with equalities.
	Indeed, e.g.
	\[\tilde I^{0+}_k \subset V \setminus (\tilde I^{00}_k \cup \tilde I^{0-}_k \cup \tilde I^{+0}_k \cup \tilde I^{+-}_k)
	\subset V \setminus (I^{00}(\bar x) \cup I^{0-}(\bar x) \cup I^{+0}(\bar x) \cup I^{+-}(\bar x)) = I^{0+}(\bar x).\]
	This finishes the proof.
      \end{proof}

      Note that if we assume that there exist a constant $L > 0$, a number $N \in \N$ and a limit point $\bar x$ such that for all $k \geq N$ it holds that
      \[\norm{x_{k+1} - \bar x}_{\infty} \leq L \norm{x_{k+1} - x_k}_{\infty},\]
      by setting $\epsilon_k := \sqrt{\norm{x_{k} - x_{k-1}}_{\infty}}$ we obtain \eqref{eq : EpsK}, since
      \[\frac{\sqrt{\norm{x_{k} - x_{k-1}}_{\infty}}}{\norm{x_{k} - \bar x}_{\infty}} \geq
      \frac{\sqrt{\norm{x_{k}- \bar x}_{\infty}}}{\sqrt{L} \norm{x_{k}- \bar x}_{\infty}} =
      \frac{1}{\sqrt{L \norm{x_{k} - \bar x}_{\infty}}} \to \infty.\]

\section{Numerical results}
  
  Algorithm \ref{AlgMPCC} was implemented in MATLAB.
  To perform numerical tests we used a subset of test problems considered in the thesis of Hoheisel \cite{Ho09}.
  
  First we considered the so-called academic example
  \begin{equation} \label{eq : academic}
    \begin{array}{rl}
	\min\limits_{x \in \mathbb{R}^{2}} & 4x_1 + 2x_2 \\
	\textrm{subject to } & x_1 \geq 0, \\
	& x_2 \geq 0, \\
	& (5 \sqrt{2} - x_1 - x_2)x_1 \leq 0, \\
	& (5 - x_1 - x_2)x_2 \leq 0. \\
      \end{array}
  \end{equation}
  As in \cite{Ho09}, we tested 289 different starting points $x^0$ with $x^0_1,x^0_2 \in \{-5,-4,\ldots,10,20 \}$.
  For 84 starting points our algorithm found a global minimizer $(0,0)$ with objective value 0, while for the
  remaining 205 starting points a local minimizer $(0,5)$ with objective value 10 was found. Hence, convergence to
  the perfidious candidate $(0,5 \sqrt{2})$, which is not a local minimizer, did not occur (see \cite{Ho09}).
  
  Expectantly, after adding constraint $3 - x_1 - x_2 \leq 0$ to the model \eqref{eq : academic}, to artificially exclude the point $(0,0)$,
  unsuitable for the practical application, we reached the point $(0,5)$, now a global minimizer.
  For more detailed information about the problem we refer the reader to \cite{Ho09} and \cite{AchHoKa13}.
  
  Next we solved 2 examples in truss topology optimization, the so called Ten-bar Truss and Cantilever Arm.
  The underlying model for both of them is as follows:
  \begin{equation} \label{eq : trusstopology}
    \begin{array}{rll}
	\min\limits_{(a,u) \in \mathbb{R}^{N} \times \mathbb{R}^{d}} &  V := \sum_{i=1}^N \ell_i a_i \\
	\textrm{subject to } & K(a)u = f,& \\
	& f u \leq c, & \\
	& a_i \leq \bar a_i & i \in \{1,2,\ldots,N\}, \\
	& a_i \geq 0 & i \in \{1,2,\ldots,N\}, \\
	& (\sigma_i(a,u)^2 - \bar\sigma^2)a_i \leq 0 & i \in \{1,2,\ldots,N\}. \\
      \end{array}
  \end{equation}
  Here the matrix $K(a)$ denotes the global stiffness matrix of the structure $a$ and the vector $f \in \mathbb{R}^d$ contains the
  external forces applying at the nodal points. Further, for each $i$ the function $\sigma_i(a,u)$ denotes the
  stress of the $i-$th potential bar and $c, \bar a_i, \bar\sigma$ are positive constants. Again, for more background
  of the model and the following truss topology optimization problems we refer to \cite{Ho09}.
  
  \begin{figure}
    \includegraphics[width=\textwidth]{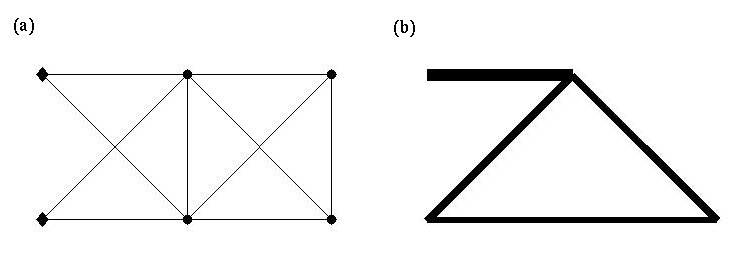}
    \caption{Ten-bar Truss example} \label{fig : TenBar}
  \end{figure}
  
  In the Ten-bar Truss example we consider the ground structure depicted in Figure \ref{fig : TenBar}(a) consisting of $N = 10$ potential bars and
  6 nodal points. We consider a load which applies at the bottom right hand node pulling vertically to the ground with force
  $\norm{f} = 1$. The two left hand nodes are fixed, and hence the structure has $d = 8$ degrees of freedom
  for displacements.
  
  We set $c := 10, \bar a := 100$ and $\bar\sigma := 1$ as in \cite{Ho09} and the resulting structure consisting of 5 bars
  is shown in Figure \ref{fig : TenBar}(b) and is the same as the one in \cite{Ho09}.
  For comparison, in the following table we show the full data containing also the stress values.
  
  \begin{center}
    \begin{tabular}{|ccc|c|}
      \hline
      $i$ & $a_i^*$ & $\sigma_i(a^*,u^*)$ & $u_i^*$ \\
      \hline
      1 & 0 &  1.029700000000000 & -1.000000000000000 \\
      2 & 1.000000000000000 &  1.000000000000000 & 1.000000000000000 \\
      3 & 0 &  1.119550000000000 & -2.000000000000000 \\
      4 & 1.000000000000000 &  1.000000000000000 & 1.302400000000000 \\
      5 & 0 &  0.485150000000000 & -1.970300000000000 \\
      6 & 1.414213562373095 &  1.000000000000000 & -3.000000000000000 \\
      7 & 0 &  0.302400000000000 & -8.000000000000000 \\
      8 & 1.414213562373095 &  1.000000000000000 & -6.511800000000000 \\ \cline{4-4}
      9 & 2.000000000000000 &  1.000000000000000 & $f^T u^* = 8$ \\                  
     10 & 0 &  1.488200000000000 & $V^*= 8.000000000000002$ \\              
      \hline
    \end{tabular}
  \end{center}
  
  We can see that although our final structure and optimal volume are the same as the final structure and the optimal volume in \cite{Ho09},
  the solution $(a^*,u^*)$ is different. For instance, since $f^T u^* = 8 < 10 = c$, our solution does not reach the maximal compliance.
  Similarly as in \cite{Ho09}, we observe the effect of vanishing constraints since the stress values from the table show
  that
  \[\sigma_{max}^* := \max_{1 \leq i \leq N} \vert \sigma_{i}(a^*,u^*) \vert = 1.4882 >
  \hat\sigma^* := \max_{1 \leq i \leq N : a_i^* > 0} \vert \sigma_{i}(a^*,u^*) \vert = 1 = \bar\sigma.\]
  
  \begin{figure}
    \includegraphics[width=\textwidth]{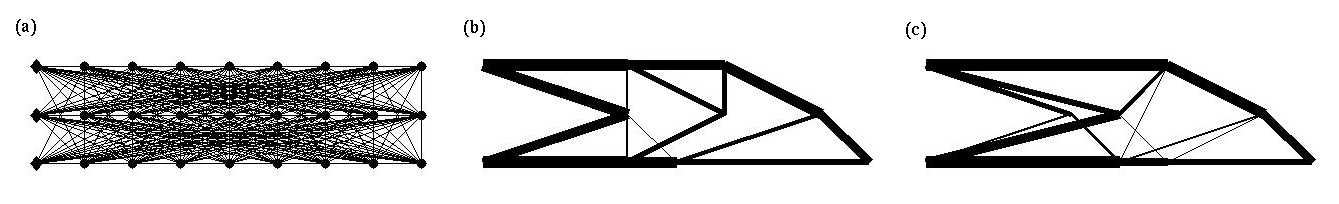}
    \caption{Cantilever Arm example} \label{fig : CantilArm}
  \end{figure}
  
  In the Cantilever Arm example we consider the ground structure depicted in Figure \ref{fig : CantilArm}(a) consisting of $N = 224$ potential bars and
  27 nodal points. Again, we consider a load acting at the bottom right hand node pulling vertically to the ground with force
  $\norm{f} = 1$. Now the three left hand nodes are fixed, and hence $d = 48$.
  
  We proceed as in \cite{Ho09} and we first set $c := 100, \bar a := 1$ and $\bar\sigma := 100$. The resulting structure consisting of only 24 bars
  (compared to 38 bars in \cite{Ho09}) is shown in Figure \ref{fig : CantilArm}(b).
  Similarly as in \cite{Ho09}, we have $\max_{1 \leq i \leq N} a_i^{*1} = \bar a$ and $f u^{*1} = c$.
  On the other hand, our optimal volume $V^{*1} = 23.4407$ is a bit larger than the optimal volume 23.1399 in \cite{Ho09}.
  Also, analysis of our stress values shows that
  \[\sigma_{max}^{*1} := \max_{1 \leq i \leq N} \vert \sigma_{i}(a^{*1},u^{*1}) \vert = 60.4294 >>
  \hat\sigma^{*1} := \max_{1 \leq i \leq N : a_i^{*1} > 0} \vert \sigma_{i}(a^{*1},u^{*1}) \vert = 2.6000\]
  and hence, although it holds true that both absolute stresses as well as absolute ''fictitious stresses'' (i.e., for zero bars)
  are small compared to $\bar\sigma$ as in \cite{Ho09}, the difference is that in our case they are not the same.
  
  The situation becomes more interesting when we change the stress bound to $\bar\sigma= 2.2$.
  The obtained structure consisting again of only 25 bars (compared to 37 or 31 bars in \cite{Ho09}) is shown in Figure \ref{fig : CantilArm}(c).
  As before we have $\max_{1 \leq i \leq N} a_i^{*2} = \bar a$ and $f u^{*2} = c$.
  Our optimal volume $V^{*2} = 23.6982$ is now much closer to the optimal volumes 23.6608 and 23.6633 in \cite{Ho09}.
  Similarly as in \cite{Ho09}, we clearly observe the effect of vanishing constraints since our stress values show
  \[\sigma_{max}^{*2} := \max_{1 \leq i \leq N} \vert \sigma_{i}(a^{*2},u^{*2}) \vert = 24.1669 >>
  \hat\sigma^{*2} := \max_{1 \leq i \leq N : a_i^{*2} > 0} \vert \sigma_{i}(a^{*2},u^{*2}) \vert = 2.2 = \bar\sigma. \]
  Finally, we obtained 32 bars (in contrast to 24 bars in \cite{Ho09}) satisfying both
  \[a_i^{*2} < 0.005 = 0.005 \bar a \, \, \textrm{ and } \, \, \vert \sigma_{i}(a^{*2},u^{*2}) \vert > 2.2 = \bar\sigma. \]
  
  To better demonstrate the performance of our algorithm we conclude this section by a table with more detailed information
  about solving Ten-bar Truss problem and 2 Cantilever Arm problems (CA1 with $\bar\sigma := 100$ and CA2 with $\bar\sigma := 2.2$).
  We use the following notation.
  \begin{center}
    \begin{tabular}{|c|c|c|c|c|c|}
      \hline
Problem&name of the test problem \\
$(n,q)$&number of variables, number of all constraints \\
$k^*$&total number of outer iterations of the SQP method \\
$(N_0, \ldots, N_{k^*-1})$&total numbers of inner iterations corresponding to each outer iteration \\
$\sum_{k=0}^{k^*-1}j(k)$&overall sum of steps made during line search\\
$\sharp f_{eval}$&total number of function evaluations, $\sharp f_{eval} = k^* + \sum_{k=0}^{k^*-1}j(k)$\\
$\sharp \nabla f_{eval}$&total number of gradient evaluations, $\sharp \nabla f_{eval} = k^*+1$\\
      \hline
    \end{tabular}
  \end{center}

  \begin{center}\small
    \begin{tabular}{|c|c|c|c|c|c|c|}
      \hline
      Problem & $(n,q)$ & $k^*$ & $(N_0, \ldots, N_{k^*-1})$ & $\sum_{k=0}^{k^*-1}j(k)$ & $\sharp f_{eval}$ & $\sharp \nabla f_{eval}$\\
      \hline
      Ten-bar Truss & $(18,39)$ & $14$ & $(1, \ldots, 1, 2, 2, 2, 2, 1, 1)$ & $67$ & $81$ & $15$ \\
      CA1 & $(272,721)$ & $401$ & $(1, \ldots, 1)$ & $401$ & $802$ & $402$ \\
      CA2 & $(272,721)$ & $1850$ & $(1, \ldots, 1)$ & $1850$ & $3700$ & $1851$ \\
      \hline
    \end{tabular}
  \end{center}
  
\section*{Acknowledgments} This work was supported by the Austrian Science Fund (FWF) under grant P 26132-N25.

\end{document}